 \numberwithin{equation}{section}
\theoremstyle{plain}
\theoremstyle{definition}
\def\C{{\mathbb{C}}}
\def\N{{\mathbb{N}}}
\def\R{{\mathbb{R}}}
\newcommand{\gG}{{\Gamma}}
\newcommand{\kH}{{\mathcal H}}
\newcommand{\kC}{{\mathcal C}}
\newcommand{\bR}{{\mathbb R}}
\newcommand{\bC}{{\mathbb C}}
\def\cH{{\mathcal H}}
\newcommand\dom{\operatorname{dom}}
\newcommand{\gotH}{{\mathfrak H}}
\def\Ext{{\rm Ext\,}}
\newcommand{\supp}{\mathop{\rm supp}\nolimits}
\newcommand{\ran}{{\mathrm{ran\,}}}
\newcommand{\diag}{\mathop{\rm diag}\nolimits}
\DeclareMathOperator{\Span}{span}\DeclareMathOperator{\imm}{Im}\DeclareMathOperator{\sh}{sh}
\DeclareMathOperator{\rank}{rank} \DeclareMathOperator{\cl}{cl}
\DeclareMathOperator{\loc}{loc}
\def\mul{{\rm mul\,}}\def\op{{\rm op\,}}
\begin{document}

\DOIsuffix{theDOIsuffix}
\Volume{}
\Month{}
\Year{}
\pagespan{1}{}
\Receiveddate{}
\Reviseddate{}
\Accepteddate{}
\Dateposted{}
\keywords{Schr\"odinger operator, point interactions,
self-adjoint extension, spectrum, positive definite  function}
\subjclass[msc2000]{47A10, 47B25}%

\title[Spectral theory  of the  Schr\"odinger operators with point
interactions]{Radial  positive   definite functions and
 spectral theory  of the  Schr\"odinger operators with point
interactions}

\author[N. Goloshchapova]{N. Goloshchapova\inst{1,}%
  \footnote{Corresponding author\quad E-mail:~\textsf{nataliia@ime.usp.br},
           }}
\address[\inst{1}]{R. Luxemburg str. 74, \, 83114 Donetsk, Ukraine}
\author[M. Malamud]{M. Malamud \inst{1,}\footnote{E-mail:~\textsf{mmm@telenet.dn.ua}.}}

\author[V.  Zastavnyi]{ V. Zastavnyi \inst{2,}\footnote{E-mail:~\textsf{zastavn@rambler.ru}.}}
\address[\inst{2}]{Univesitetskaja str. 24,\, 83001 Donetsk, Ukraine}
    \dedicatory{ Dedicated   to  the  75th  anniversary of  Eduard  Tsekanovskii.}
\begin{abstract}
  We  complete the  classical    Schoenberg representation  theorem for  radial
positive definite  functions. We  apply this result  to study
spectral properties  of self-adjoint realizations   of two- and
three-dimensional Schr\"odinger operators with
point interactions  on a  finite  set. In particular,  we prove  that  any realization has  purely absolutely continuous non-negative spectrum.
\end{abstract}

\maketitle

\section{Introduction}
An important topic in quantum mechanics is the spectral theory of Schr\"odinger operators on the Hilbert space $L^2(\mathbb{R}^d),\,d\in\{1,2,3\},$ with
potentials supported on a discrete (finite or countable) set of points of $\mathbb{R}^d$.
 There is an extensive
literature on such operators (see \cite{Ada07,AGHH88,AK1,ArlTse05,BF,bmn,hk,LyaMaj,ogu10,Pos08} and
references therein).
The first  mathematical problem is to associate a self-adjoint operator
(Hamiltonian) on $L^2(\mathbb{R}^d)$ with the differential expression
\begin{equation}\label{eq0}
 \mathfrak{L}_d:= -\Delta+\sum\limits_{j=1}^m\alpha_j\delta(\cdot - x_j),\quad \alpha_j\in \mathbb{R},\,\,\,m \in\mathbb{N}.
 \end{equation}
 There are at least two    natural ways to associate  self-adjoint operator $H_{X,\alpha}$ with the   following differential expression  in $L^2(\mathbb{R}^1)$
\begin{equation*}
\mathfrak{L}_1:= -\frac{\textrm{d}^2}{\textrm{dx}^2}+\sum\limits_{j=1}^m\alpha_j\delta(\cdot - x_j),\,\,\,m \in\mathbb{N}
\end{equation*}
 for any   fixed set $\alpha:=\{\alpha_j\}_{j=1}^m\subset\mathbb{R}$.
The first  one  is  based on  the  quadratic  forms  method.  Another way to  introduce   local interactions  on  $X:=\{x_j\}_{j=1}^m\subset\mathbb{R}$   is to  consider  maximal  operator corresponding to  $\mathfrak{L}_1$  and impose boundary  conditions at $x_j, \quad j\in\{1,..,m\}$ (see \cite{AGHH88}), i.e.,
\[\dom(H_{X,\alpha}) = \{f\in W^{2,2}(\mathbb{R}\setminus X) \cap W^{1,2}(\mathbb{R}) : f'(x_j+) - f'(x_j-) = \alpha_jf(x_j)\}.\]

In contrast  to  one-dimensional   case,   the  differential  expression \eqref{eq0}
   does  not define an operator  in $L^2(\mathbb{R}^d),\\d\geq 2$,  by  means of  the quadratic   forms since  the linear functional $\delta_x :\,f\rightarrow f(x)$  is not  continuous in $W^{1,2}(\mathbb{R}^d)$  for $d\geq 2$.
 However, it is still possible to apply the extension theory approach.
  Namely, F. Berezin and
L.~Faddeev in their pioneering paper  \cite{BF}  proposed to consider \eqref{eq0} (with $m = 1$ and $d = 3$) in the
framework of extension theory. They     associated   with $\mathfrak{L}_{d}$ the family  of all self-adjoint  extensions  of the  following  symmetric  operator
    \begin{equation}\label{min}
H:=-\Delta,\quad \dom(H) :=\bigl\{f \in W^{2,2}(\bR^d):
f(x_j)=0,\quad j\in \{1,..,m\} \bigr\},\,\, m\in\N.
\end{equation}
It is well known that    $H$ is  closed non-negative symmetric
operator with equal
 deficiency  indices $n_{\pm}(H)=m$ (see \cite{AGHH88}).
   In  \cite{AGHH88}, the  authors proposed to associate   with the Hamiltonian \eqref{eq0}  certain    $m$-parametric  family  $H_{X,\alpha}^{(d)}$   describing  local   point  interactions. They    parameterized the   family      in terms of the  resolvents.
   The latter  enabled   the  authors   to
 obtain  an explicit description of  the   spectrum for any operator from the  family  $H_{X, \alpha}^{(d)}$.

  In  the recent  publications \cite{ArlTse05,bmn,hk},  boundary  triplets and the corresponding Weyl  functions
  technique
   (see   \cite{DerMal91, GG} and also  Section \ref{prelim}) was involved
  to investigate  multi-dimensional Schr\"odinger operators with point
  interactions (the  cases $d\in\{2,3\}$).
 In  the present paper, we  apply  boundary  triplets  approach   to  parametrize    all  self-adjoint   extensions  of  $H$. 
  Besides, using Weyl functions technique, we investigate their    spectra.  
   Moreover, we   substantially involve  the theory  of  radial  positive definite functions   \cite[chapter V]{Akh65}  in our approach.
  In particular,  we employ strict  positive  definiteness   (see  Definition \ref{defpoz}) of the
  functions $\tfrac{\sin s|\cdot|}{s|\cdot|}$   and the  Bessel   functions  $J_0(s|\cdot|)$  with  any $s>0$ on
  $\mathbb{R}^3$ and  $\mathbb{R}^2$, respectively, to  prove  pure absolute  continuity of  non-negative spectrum of any self-adjoint
  realization  of $\mathfrak{L}_d$.  For this purpose we  complete   the classical  Schoenberg
  theorem \cite{Sch38}  regarding  the integral  representation  of radial positive   definite  functions.

The paper  is organized as follows. Section 2  is  introductory.
It contains definitions of   a  boundary  triplet and the  corresponding  Weyl  function \cite{DerMal91,GG}  and  also  facts about the   Weyl  functions  \cite{BraMal02,MalNei11}. In Section 3,  we complete    Schoenberg
  theorem  by establishing  strict positive definiteness  of  any  non-constant
  radial  positive definite   function on  $\mathbb{R}^n,\, n\geq 2$.
  In   Sections 4 and
 5,   we   investigate   3D- and 2D-Schr\"odinger
  operators with  point  interactions,
  respectively.

    Namely, in Subsection 4.1 (resp., 5.1),  we  define  boundary triplet $\Pi=\{\mathcal{H},\Gamma_0,\Gamma_1\}$
    for   $H^*$ and  compute the  corresponding
    Weyl  function.  It appears  to  be close  to  that contained  in  Krein's  resolvent  formula for the  family  $H_{X,\alpha}^{(d)}$  in \cite{AGHH88}. In particular,  for the proof of  the  surjectivity  of the  mapping
  $\Gamma=(\Gamma_0,\Gamma_1)^\top$  we employ   the   strict  positive  definiteness of
   the function $e^{-|\cdot|}$ on  $\mathbb{R}^n$  for any  $\,n\in\mathbb{N}$.

Subsections 4.2 and 5.2 are  devoted  to the  spectral
 analysis of the   self-adjoint  realizations of $\mathfrak{L}_d$.  To  investigate   the  absolutely continuous
 spectrum
we apply technique elaborated in \cite{BraMal02, MalNei11}. For
this purpose we  need invertibility of the matrices
    \[
\left(\delta_{kj}+\frac{\sin(\sqrt{x}|x_k-x_j|)}{\sqrt{x}|x_k-x_j|+\delta_{kj}}\right)_{j,k=1}^m
\quad \text {and}\quad
\left(J_0(\sqrt{x}|x_j-x_{k}|)\right)_{j,k=1}^m,\quad x\in \R_+,
     \]
which we extract from the  strict positive definiteness of  the
functions $\tfrac{\sin s|\cdot|}{s|\cdot|}$ and $J_0(s|\cdot|),\,s>0,$ on  $\R^3$ and
$\R^2$, respectively. We emphasize  that in the   proof of Theorems \ref{spec3}  and \ref{spec2} our complement to Schoenberg  theorem is  used in full generality. Indeed, it follows  from the  integral representation \eqref{schonberg} that the  strict  positive  definiteness of $\tfrac{\sin s|\cdot|}{s|\cdot|}$ and $J_0(s|\cdot|)$  for all  $s>0$ yields the strict positive  definiteness   of  any   radial positive  definite function $f$    on
  $\mathbb{R}^3$ and  $\mathbb{R}^2$, respectively.

     Finally,  description  of
  the non-negative self-adjoint  extensions  of  $H$ ($d=3$) is  provided   in   Subsection
  4.3.  For   suitable choice of a   boundary  triplet  $\widetilde{\Pi}$
  strong resolvent limit of the  corresponding   Weyl  function
  $\widetilde{M}(x)$   at  $x=0$ appears to be positive  definite  matrix
  in a view of   strict  positive   definiteness   of  the function
  $\tfrac{1-e^{-|\cdot|}}{|\cdot|}$ \,\, on\,\,  $\mathbb{R}^n$    for  any $\,n\in\mathbb{N}$. 

\textbf{Notation.} Let   $\gotH$ and $\kH$ denote separable
Hilbert spaces; $[{\gotH}, {\kH}]$ denotes  the space of bounded
linear operators from ${\gotH}$ to ${\kH}$, $[\kH]:=[\kH,\kH]$;
the set of closed operators in $\kH$ is denoted by $\kC(\kH)$. Let
$A$ be a linear operator in a Hilbert space $\mathfrak{H}$. In  what follows domain, kernel, and range of $A$ are  denoted   by
 $\dom (A)$, $\ker (A)$, $\ran (A)$, respectively; $\sigma(A)$ and $\rho (A)$
denote the spectrum and the resolvent set of $A$; $\mathfrak{N}_z$
denotes the defect subspace of $A$; 
\,$C[0,\infty)$ denotes   the Banach space of functions
continuous and  bounded on $[0,\infty)$.
\section{Extension  theory  of  symmetric  operators}
\subsection{Boundary  triplets and proper extensions}\label{prelim}

 In this
subsection, we recall basic notions and facts of the theory of
boundary triplets (we refer the reader to \cite{DerMal91, DerMal95, GG} for
a detailed exposition). In what follows  $A$  always denotes a closed symmetric operator in  separable Hilbert space
$\gotH$ with equal deficiency indices $n_-(A)=n_+(A)\leq \infty$.
    \begin{definition}\cite{GG}\label{bound}%
\,\,A totality $\Pi=\{\kH,\gG_0,\gG_1\}$ is called a {\rm boundary
triplet} for the adjoint operator $A^*$ of $A$ if $\kH$ is an
auxiliary Hilbert space and
$\Gamma_0,\Gamma_1:\  \dom(A^*)\rightarrow\kH$ are linear mappings such that\\
 $(i)$ the following abstract second  Green identity holds
\begin{equation}\label{GI}
(A^*f,g)_\gotH - (f,A^*g)_\gotH = (\gG_1f,\gG_0g)_\kH -
(\gG_0f,\gG_1g)_\kH,\qquad f,g\in\dom(A^*);
\end{equation}
%
%
%
$(ii)$ the mapping $\gG:=(\Gamma_0,\Gamma_1)^\top: \dom(A^*)
\rightarrow \kH \oplus\kH$ is surjective.
   \end{definition}
   With   a  boundary   triplet   $\Pi=\{\kH,\gG_0,\gG_1\}$ for $A^*$  one  associates two    self-adjoint extensions of $A$  defined  by
\[
 A_0:=A^*\!\upharpoonright\ker(\gG_0)\quad  \text{and}\quad
A_1:=A^*\!\upharpoonright\ker(\gG_1).
\]

    \begin{definition}
$(i)$ A closed extension $\widetilde{A}$ of $A$ is called
\emph{proper} if $A\subseteq\widetilde{A}\subseteq A^*$.  The set
of all  proper  extensions of  $A$  is  denoted  by $\Ext_A$.

$(ii)$ Two proper extensions $\widetilde{A}_1$ and $\widetilde{A}_2$
of $A$ are called \emph{disjoint}
 if $\dom(\widetilde{A}_1)\cap\dom(\widetilde{A}_2)=\dom(A)$.
           \end{definition}
%
%

       \begin{remark}
$(i)$  For any symmetric operator $A$ with  $n_+(A)=n_-(A)$, a
boundary triplet $\Pi=\{\kH,\gG_0,\gG_1\}$ for $A^*$ exists and is
not unique \cite{GG}. It is  known also that  $\dim \kH =
n_{\pm}(A)$ and $\ker\gG =\ker(\Gamma_0,\Gamma_1)^\top = \dom(A).$

 $(ii)$ Moreover, for each self-adjoint extension
 $\widetilde{A}$ of $A$  there  exists  a  boundary   triplet
 $\Pi=\{\mathcal{H},\Gamma_0,\Gamma_1\}$  such  that
 $\widetilde{A}=A^*\upharpoonright\ker(\Gamma_0) =: A_0.$


 $(iii)$ For   each  boundary  triplet $\Pi=\{\mathcal{H},\Gamma_0,\Gamma_1\}$ for  $A^*$  and each
 bounded self-adjoint  operator $B$  in  $\mathcal{H}$    a triplet
 $\Pi_B=\{\mathcal{H},\Gamma_0^B,\Gamma_1^B\}$   with   $\Gamma_1^B:=\Gamma_0$  and
 $\Gamma_0^B:=B\Gamma_0-\Gamma_1$ is also  a  boundary  triplet   for  $A^*$.
 \end{remark}

 A   role  of  a  boundary triplet for $A^*$  in   the   extension theory is   similar
 to that of coordinate system in the analytic  geometry. Namely,  it allows  one  to  parameterize
 the  set  $\Ext_A$   by  means of  linear  relations in $\kH$ in place of J.von Neumann formulas.
To  explain this we
recall the following   definitions.
\begin{definition}
$(i)$  A  closed  linear relation $\Theta$ in  $\mathcal{H}$ is
a closed subspace of $\mathcal{H}\oplus\mathcal{H}$.

$(ii)$  A linear  relation  $\Theta$  is symmetric
 if  $(g_1,f_2)-(f_1,g_2)=0$  for all  $\{f_1,g_1\},  \{f_2,g_2\}\in  \Theta$.

 $(iii)$ The adjoint relation
$\Theta^*$  is defined by
\begin{equation*}
\Theta^*= \left\{ \{k,k'\}
: (h^\prime,k)=(h,k^\prime)\,\,\text{for all}\,
\{h,h'\}
\in\Theta\right\}.
\end{equation*}

 $(iv)$  A closed linear   relation $\Theta$  is  called  self-adjoint
 if both $\Theta$ and $\Theta^*$ are  maximal symmetric, i.e., they do not admit symmetric extensions.
    \end{definition}
     For the
symmetric relation $\Theta\subseteq\Theta^*$ in $\cH$ the
multivalued part $\mul(\Theta)$ is the orthogonal complement of
$\dom(\Theta)$ in $\cH$. Setting $\cH_{\rm
op}:=\overline{\dom(\Theta)}$ and $\cH_\infty=\mul(\Theta)$, one
verifies that $\Theta$ can be rewritten as the direct orthogonal sum
of a self-adjoint operator $\Theta_{\rm op}$ in the subspace
$\cH_{\rm op}$ and a ``pure'' relation
$\Theta_\infty=\bigl\{\{0,f'\}:f'\in\mul(\Theta)\bigr\}$ in the
subspace $\cH_\infty$.

%
%



\begin{proposition}\cite{DerMal91,GG}\label{propo}
Let  $\Pi=\{\mathcal{H},\Gamma_0,\Gamma_1\}$  be   a boundary
triplet for $A^*$. Then  the  mapping
\begin{equation}\label{s-aext}
\Ext_A\ni\widetilde{A}:=A_\Theta\rightarrow \Theta:=\Gamma(\dom(\widetilde{A}))=\{\{\Gamma_0f,\Gamma_1f\}:\,\,f\in \dom(\widetilde{A})\}
\end{equation}
establishes  a  bijective  correspondence   between  the  set  of
all  closed proper extensions $\Ext_A$  of $A$ and  the set of all
closed   linear  relations $\widetilde{\mathcal{C}}(\mathcal{H})$
in $\mathcal{H}$. Furthermore,  the following  assertions  hold.

$(i)$
  The  equality $(A_\Theta)^*=A_{\Theta^*}$
%
%
holds  for any  $\Theta\in\widetilde{\mathcal{C}}(\mathcal{H})$.

$(ii)$
 The extension $A_\Theta$  in \eqref{s-aext} is  symmetric
(self-adjoint)  if  and only  if \,  $\Theta$  is symmetric
(self-adjoint). Moreover, $n_{\pm}(A_\Theta) = n_{\pm}(\Theta).$

$(iii)$
If,  in  addition,   the closed extensions $A_\Theta$ and
$A_0$ are disjoint, then   \eqref{s-aext} takes the form
\begin{equation*}\label{bijop}
A_\Theta=A_B=A^*\!\upharpoonright\dom(A_B) ,\quad \dom (A_B)=\dom
(A^*)\!\upharpoonright\ker\bigl(\Gamma_1-B\Gamma_0\bigr),\quad  B\in\mathcal{C}(\mathcal{H}).
   \end{equation*}
%
%
 \end{proposition}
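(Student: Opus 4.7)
The plan is to exploit the two axioms of the boundary triplet, namely the abstract Green identity \eqref{GI} and surjectivity of $\Gamma=(\Gamma_0,\Gamma_1)^\top$, together with the fact (stated earlier in the remark) that $\ker\Gamma=\dom(A)$. A preliminary observation is that $\Gamma$ is continuous from $\dom(A^*)$, endowed with the graph norm of $A^*$, into $\mathcal{H}\oplus\mathcal{H}$; this follows from the closed graph theorem once one notes that for any sequence $f_n\to f$ in the graph norm with $\Gamma f_n\to\{h_0,h_1\}$, Green's identity passes to the limit and forces the limit to equal $\Gamma f$. This continuity is what makes the forward and inverse correspondence preserve closedness.

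With that in hand, I would prove the bijection as follows. Given a proper extension $\widetilde A$, set $\Theta:=\Gamma(\dom(\widetilde A))$; closedness of $\Theta$ in $\mathcal{H}\oplus\mathcal{H}$ follows from the continuity of $\Gamma$, and linearity is immediate. Conversely, for any closed linear relation $\Theta\subseteq\mathcal{H}\oplus\mathcal{H}$, define $A_\Theta:=A^*\!\upharpoonright\Gamma^{-1}(\Theta)$; the domain is closed in the graph norm, so $A_\Theta\in\Ext_A$, and the inclusion $\dom(A)=\ker\Gamma\subseteq\Gamma^{-1}(\Theta)$ shows $A\subseteq A_\Theta$. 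Surjectivity of $\Gamma$ together with $\ker\Gamma=\dom(A)$ gives the two-sided inverse, yielding the bijection.

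For assertion (i), I would compute directly from Green's identity: $g\in\dom((A_\Theta)^*)$ iff $(A^*f,g)=(f,A^*g)$ for all $f$ with $\{\Gamma_0 f,\Gamma_1 f\}\in\Theta$, which by \eqref{GI} is equivalent to $(\Gamma_1 f,\Gamma_0 g)_\mathcal{H}=(\Gamma_0 f,\Gamma_1 g)_\mathcal{H}$ for all such $f$, i.e.\ $\{\Gamma_0 g,\Gamma_1 g\}\in\Theta^*$ (here one uses surjectivity of $\Gamma$ to vary $(\Gamma_0 f,\Gamma_1 f)$ freely over $\Theta$). Thus $(A_\Theta)^*=A_{\Theta^*}$. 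Assertion (ii) then drops out: $A_\Theta$ is symmetric (resp.\ self-adjoint) iff $A_\Theta\subseteq A_{\Theta^*}$ (resp.\ equality), and the bijection translates this to $\Theta\subseteq\Theta^*$ (resp.\ $\Theta=\Theta^*$). The equality $n_\pm(A_\Theta)=n_\pm(\Theta)$ follows by computing defect subspaces via $\Gamma$: one checks that $\Gamma$ restricts to a bijection between $\ker(A_\Theta^*\mp i)\ominus\ker(A_\Theta\mp i)$ (inside the triplet's structure) and the corresponding defect subspaces of $\Theta$. Finally, for (iii), disjointness $\dom(A_\Theta)\cap\dom(A_0)=\dom(A)$ means $\Gamma^{-1}(\Theta)\cap\ker\Gamma_0=\ker\Gamma$, equivalently $\Theta\cap(\{0\}\oplus\mathcal{H})=\{(0,0)\}$, so $\mul(\Theta)=\{0\}$ and $\Theta$ is the graph of a closed operator $B\in\mathcal{C}(\mathcal{H})$; rewriting the incidence relation gives $\dom(A_B)=\{f\in\dom(A^*):\Gamma_1 f=B\Gamma_0 f\}$.

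The main obstacle, as I see it, is the careful bookkeeping for well-definedness in both directions: proving that $\Gamma^{-1}(\Theta)$ is closed in the graph norm, and conversely that $\Gamma(\dom(\widetilde A))$ is closed in $\mathcal{H}\oplus\mathcal{H}$. Both hinge on the continuity of $\Gamma$ in the graph norm, which is the single nontrivial input; once that is secured, assertions (i)--(iii) are essentially algebraic manipulations of Green's identity and the surjectivity of $\Gamma$.
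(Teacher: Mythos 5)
The paper itself gives no proof of this proposition---it is imported verbatim from \cite{DerMal91,GG}---so your proposal has to stand on its own. Its architecture is the standard one and is mostly sound: continuity of $\Gamma$ in the graph norm via the closed graph theorem and the Green identity \eqref{GI}, the correspondence $\widetilde A\mapsto\Gamma(\dom\widetilde A)$, the computation $(A_\Theta)^*=A_{\Theta^*}$ from \eqref{GI} together with surjectivity of $\Gamma$, and the reduction of $(iii)$ to $\mul(\Theta)=\{0\}$ are all correct in outline. One imprecision: closedness of $\Theta=\Gamma(\dom\widetilde A)$ does \emph{not} follow ``from the continuity of $\Gamma$''---continuous images of closed sets need not be closed. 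The correct argument is that the induced map $\widehat\Gamma:\dom(A^*)/\dom(A)\to\mathcal H\oplus\mathcal H$ is a continuous bijection of Banach spaces (here surjectivity and $\ker\Gamma=\dom(A)$ enter), hence a topological isomorphism by the bounded inverse theorem; only then do closed subspaces containing $\dom(A)$ and closed relations in $\mathcal H$ correspond bijectively in both directions. This is fixable with the ingredients you already have, but as written the justification is wrong.

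The genuine gap is the equality $n_\pm(A_\Theta)=n_\pm(\Theta)$ in $(ii)$, which is not an ``essentially algebraic manipulation,'' and the mechanism you sketch fails. The map $\Gamma$ does send $\ker((A_\Theta)^*\mp i)=\mathfrak N_{\pm i}\cap\dom(A_{\Theta^*})$ injectively into $\Theta^*$, but its image consists of pairs $\{h,M(\pm i)h\}$ with $h\in\ker\bigl(\Theta^*-M(\pm i)\bigr)$, where $M(\cdot)$ is the Weyl function of the triplet; it does \emph{not} consist of the pairs $\{k,\pm ik\}$ forming the defect subspaces of $\Theta$. So your computation only yields $n_\pm(A_\Theta)=\dim\ker\bigl(\Theta^*-M(\pm i)\bigr)$, and the proof still requires the nontrivial identity $\dim\ker\bigl(\Theta^*-M(\pm i)\bigr)=\dim\ker(\Theta^*\mp i)$. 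That step rests on the fact that $\pm\imm M(\pm i)\ge\varepsilon I>0$ and is normally established by a homotopy or fractional-linear (Cayley-transform) argument, or via a Krein-type resolvent formula, as in \cite{DerMal91}; it is the genuinely analytic part of assertion $(ii)$ and is missing from your proposal.
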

%
%


\subsection{Weyl   function, $\gamma$-field  and spectra of proper extensions}

It  is  known that  Weyl  function is an important tool in
the spectral  theory of singular  Sturm-Liouville  operators.  In
\cite{DerMal91,DerMal95} the concept of Weyl function was
generalized to   an  arbitrary symmetric operator $A$  with
equal deficiency indices.  In  this  subsection we
recall basic  facts about  Weyl   functions.
\begin{definition}\cite{DerMal91}\label{Weylfunc}
Let $\Pi=\{\kH,\gG_0,\gG_1\}$ be a boundary triplet  for $A^*.$
The operator valued function   $M(\cdot) :\
\rho(A_0)\rightarrow  [\kH]$ defined by
\begin{equation}\label{2.3A}
 M(z)\Gamma_0f_z=\Gamma_1f_z, \quad f_z\in \mathfrak{N}_z,\,\,z\in\rho(A_0),
      \end{equation}
is  called  the {\em Weyl function}, corresponding to the triplet $\Pi.$
      \end{definition}
%
The  definition  of the  Weyl function  is  correct   and the Weyl function $M(\cdot)$  is Nevanlinna  or  R-function.

 In  the   following  we  will be concerned  with a simple symmetric  operators.  Recall  that a  symmetric  operator $A$ is  said  to  be  \emph{simple} if there is no nontrivial subspace  which reduces  it to self-adjoint  operator.

 The spectrum and the resolvent set of the closed (not
necessarily self-adjoint) extensions of  simple symmetric  operator $A$ can be described with
the help of the function $M(\cdot)$.  Namely, the  following  proposition  holds.
  \begin{proposition}\label{prop_II.1.4_spectrum}
Let $A$ be a  densely defined  simple symmetric operator in $\gotH$,
$\Theta\in \widetilde{\mathcal{C}}(\mathcal{H})$, $A_\Theta\in \Ext_A$,
and $z\in \rho(A_0)$. Then the following equivalences hold.

(i)\  $z\in \rho(A_\Theta) \quad
\Longleftrightarrow\quad 0\in \rho(\Theta-M(z));$

(ii) \ $z\in\sigma_\tau(A_\Theta) \quad
\Longleftrightarrow\quad 0\in \sigma_\tau(\Theta-M(z)),\qquad
\tau\in\{ p,\ c,\ r\};$

(iii)\
$f_z\in \ker(A_\Theta - z) \quad \Longleftrightarrow\quad
\Gamma_0f_z\in \ker(\Theta - M(z))$\quad
and\ \  $\dim\ker(A_\Theta -z) = \dim\ker(\Theta - M(z)).$
      \end{proposition}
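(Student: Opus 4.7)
The plan is to view assertion (iii) as the engine and bootstrap (i) and (ii) from it via a Krein-type resolvent formula. Everything rests on the observation that for $z\in\rho(A_0)$ the restriction $\Gamma_0\upharpoonright\mathfrak{N}_z$ is a bijection from $\mathfrak{N}_z$ onto $\mathcal{H}$. Injectivity holds because $\mathfrak{N}_z\cap\ker\Gamma_0\subseteq\mathfrak{N}_z\cap\dom(A_0)=\ker(A_0-z)=\{0\}$, while surjectivity follows from surjectivity of $\Gamma=(\Gamma_0,\Gamma_1)^\top$ combined with the von Neumann decomposition $\dom(A^*)=\dom(A_0)\dotplus\mathfrak{N}_z$, valid for $z\in\rho(A_0)$. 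Call the inverse the $\gamma$-field $\gamma(z):\mathcal{H}\to\mathfrak{N}_z$; it is bounded by the closed graph theorem.

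For (iii) I would argue: if $f_z\in\ker(A_\Theta-z)$, then $A^*f_z=zf_z$ forces $f_z\in\mathfrak{N}_z$, and membership in $\dom(A_\Theta)$ gives $\{\Gamma_0f_z,\Gamma_1f_z\}\in\Theta$ by Proposition~\ref{propo}. By Definition~\ref{Weylfunc}, $\Gamma_1f_z=M(z)\Gamma_0f_z$, so $\Gamma_0f_z\in\ker(\Theta-M(z))$, where $\Theta-M(z)$ is understood as the relation $\{\{h,h'-M(z)h\}:\{h,h'\}\in\Theta\}$. Conversely, given $h\in\ker(\Theta-M(z))$, set $f_z:=\gamma(z)h$ and reverse the argument. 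Since $\gamma(z)$ is a bijection, the dimension equality holds and the point-spectrum case of (ii) is immediate.

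For (i) and the $\sigma_c$, $\sigma_r$ parts of (ii), I would decompose any $f\in\dom(A^*)$ solving $(A^*-z)f=g$ as $f=(A_0-z)^{-1}g+f_z$ with $f_z\in\mathfrak{N}_z$; then $\Gamma_0f=\Gamma_0f_z$ and $\Gamma_1f=\Gamma_1(A_0-z)^{-1}g+M(z)\Gamma_0f_z$. The condition $f\in\dom(A_\Theta)$ becomes $\{\Gamma_0f_z,\,\Gamma_1(A_0-z)^{-1}g\}\in\Theta-M(z)$. Since $g\mapsto\Gamma_1(A_0-z)^{-1}g$ is a bounded surjection from $\gotH$ onto $\mathcal{H}$ (combine the bijection $(A_0-z)^{-1}:\gotH\to\ker\Gamma_0$ with surjectivity of $\Gamma_1\upharpoonright\ker\Gamma_0$, itself a consequence of surjectivity of $\Gamma$), the range, kernel, and closure properties of $A_\Theta-z$ on $\gotH$ transfer verbatim to those of $\Theta-M(z)$ on $\mathcal{H}$. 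Equivalently, one derives the Krein-type resolvent formula
\[
(A_\Theta-z)^{-1}=(A_0-z)^{-1}+\gamma(z)(\Theta-M(z))^{-1}\gamma(\bar z)^*,\qquad \gamma(\bar z)^*=\Gamma_1(A_0-z)^{-1},
\]
with the adjoint identity verified via the Green identity \eqref{GI}. The main obstacle is the bookkeeping required because $\Theta-M(z)$ is generally a linear relation rather than an operator, so its resolvent set and spectral parts must be defined in the linear-relation sense (e.g.\ $0\in\rho(\Theta-M(z))$ meaning the relation is the graph of a bounded everywhere-defined operator); once that convention is fixed and boundedness is handled via $\gamma(z)$, each equivalence in (i)--(ii) is a direct verification.
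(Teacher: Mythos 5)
Your proposal is correct, and it is essentially the standard argument for this statement; the paper itself does not prove Proposition~\ref{prop_II.1.4_spectrum} but quotes it as a known result from the boundary-triplet literature (\cite{DerMal91}), whose proof proceeds exactly along your lines: bijectivity of $\Gamma_0\upharpoonright\mathfrak{N}_z$ for $z\in\rho(A_0)$, assertion (iii) via the identity $\Gamma_1 f_z=M(z)\Gamma_0 f_z$, and the decomposition $f=(A_0-z)^{-1}g+f_z$ which yields $\ker(A_\Theta-z)=\gamma(z)\ker(\Theta-M(z))$ and $\ran(A_\Theta-z)=T^{-1}\bigl(\ran(\Theta-M(z))\bigr)$ with $T:=\Gamma_1(A_0-z)^{-1}=\gamma(\bar z)^*$. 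Two small points are worth making explicit if you write this up: the ``verbatim transfer'' of density and closedness of ranges is not purely set-theoretic but follows because $T$ is a \emph{bounded surjection} (boundedness via the closed graph theorem or the adjoint identity, surjectivity as you argue), so preimages of dense sets are dense by the open mapping theorem, and conversely closedness and density pass from $T^{-1}(R)$ to $R$ by the same theorem; and since $\Theta$ is closed and $M(z)\in[\kH]$, the relation $\Theta-M(z)$ is closed, so that $0\in\rho(\Theta-M(z))$ in the relation sense is indeed equivalent to trivial kernel plus full range, matching the operator side where $A_\Theta$ is closed. Note also that the simplicity of $A$ assumed in the statement is not needed for these equivalences at points $z\in\rho(A_0)$; it becomes relevant only for conclusions about the spectrum drawn from boundary values of $M(\cdot)$, as in Proposition~\ref{ac}.
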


The  following
proposition   gives us quantitative  characterization  of the   negative spectrum of  self-adjoint  extensions of the  operator $A$.
   \begin{proposition}\cite{DerMal91}\label{prkf}
Let $A$ be  a densely defined non-negative   symmetric operator  in
$\gotH$, and  let   $\Pi=\{\kH,\gG_0,\gG_1\}$ be a  boundary
 triplet for  $A^*$. Let  also  $M(\cdot)$ be the corresponding
Weyl function  and $A_0=A_F$,  where  $A_F$  is the  Friedrichs   extension  of $A$. Then the   following  assertions hold.

$(i)$ The  strong  resolvent  limit   $M(0):=s-R-\lim\limits_{x\uparrow 0} M(x)$   exists and  is   self-adjoint  linear  relation  semi-bounded  from   below.

$(ii)$   If,  in  addition,  $M(0)\in[\cH]$,   then   the number  of negative squares  of    $A_\Theta=A_\Theta^*$
equals the number of negative squares of the  relation
$\Theta-M(0)$, i.e.,
$\kappa_{-}(A_\Theta)=\kappa_{-}(\Theta-M(0))$.

In  particular,  the self-adjoint extension $A_\Theta$ of $A$ is  non-negative
if and only  if the linear relation $\Theta-M(0)$ is non-negative.
\end{proposition}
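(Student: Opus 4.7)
My plan is to prove (i) directly from the Nevanlinna structure of $M(\cdot)$. Since $A_0=A_F$ is non-negative, $(-\infty,0)\subset\rho(A_0)$, so $M(x)\in[\kH]$ is self-adjoint for every $x<0$, and the general fact that any R-function is monotonically non-decreasing on real intervals of its domain gives $M(x_1)\le M(x_2)$ whenever $x_1<x_2<0$. A standard result on monotone families of self-adjoint operators then produces the strong resolvent limit $M(0)$ as a self-adjoint linear relation: its operator part is the form limit defined on $\{h\in\kH:\sup_{x<0}(M(x)h,h)<\infty\}$, while its multivalued part captures the directions in which $(M(x)h,h)\to+\infty$. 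Semi-boundedness from below is automatic since $M(0)\ge M(x_0)$ in the form sense for any fixed $x_0<0$.

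For (ii) I would combine Proposition \ref{prop_II.1.4_spectrum}(iii) with the monotonicity from (i). By (iii) the multiplicity of a negative eigenvalue $x$ of $A_\Theta$ equals $\dim\ker(\Theta-M(x))$. Set $n_-(x):=\kappa_-(\Theta-M(x))$; monotonicity of $M(\cdot)$ makes $\Theta-M(x)$ monotonically non-increasing in $x$, so $n_-(x)$ is integer-valued and non-decreasing on $(-\infty,0]$, and each jump $n_-(x_0+)-n_-(x_0)=k$ records a $k$-fold eigenvalue of $A_\Theta$ at $x_0$. Summing jumps gives $\kappa_-(A_\Theta)=\sup_{x<0}n_-(x)$.

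It then remains to identify $\sup_{x<0}n_-(x)$ with $n_-(0)=\kappa_-(\Theta-M(0))$. The assumption $M(0)\in[\kH]$ yields strong convergence $M(x)\to M(0)$ as $x\uparrow 0$, hence by lower semicontinuity of $\kappa_-$ under strong resolvent limits one has $\sup_{x<0}n_-(x)\le n_-(0)$, while a continuity argument for the eigenvalues of the monotone family $\Theta-M(x)$ supplies the reverse inequality. To close the loop one must preclude accumulation of jumps at $-\infty$; here the Friedrichs character of $A_0$ enters decisively through the asymptotic $(M(x)h,h)\to-\infty$ as $x\to-\infty$ for every nonzero $h$, which forces $n_-(x)=0$ for $x$ sufficiently negative. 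The non-negativity criterion follows at once by setting both sides of $\kappa_-(A_\Theta)=\kappa_-(\Theta-M(0))$ to zero.

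The main obstacle is precisely this Friedrichs-limit property of $M(x)$ at $-\infty$: without the hypothesis $A_0=A_F$, negative eigenvalues of $A_\Theta$ could escape to $-\infty$ along a divergent branch of $M(\cdot)$ and the identity would have to carry a correction term equal to the defect between $A_0$ and $A_F$. I would establish the property either by invoking the characterization of the Friedrichs extension as the maximal non-negative self-adjoint extension of $A$ (so the Weyl function attached to $A_F$ dominates, in the monotone order, any Weyl function attached to another non-negative extension), or by a direct form-domain comparison showing that the quadratic form $(M(x)\cdot,\cdot)$ diverges to $-\infty$ along every direction as $x\to-\infty$.
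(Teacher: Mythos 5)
The paper itself offers no proof of Proposition \ref{prkf}: it is quoted from \cite{DerMal91}, so your attempt can only be compared with the argument known from that reference. Your overall route is the standard one and, for part $(i)$, is sound: on $(-\infty,0)\subset\rho(A_F)$ the Weyl function is a monotonically non-decreasing family of bounded self-adjoint operators, and the convergence theorem for monotone families yields the strong resolvent limit $M(0)$ as a self-adjoint relation bounded below by any fixed $M(x_0)$ (only note that the set $\{h:\sup_{x<0}(M(x)h,h)<\infty\}$ describes the form domain of the operator part, not its operator domain). For part $(ii)$, basing the count on Proposition \ref{prop_II.1.4_spectrum}$(iii)$ together with the monotone dependence of $\kappa_{-}(\Theta-M(x))$ on $x$ is also the right idea, and in the setting in which the paper actually invokes the proposition ($\kH=\C^m$) your counting argument can be completed.

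Two points need repair, however. First, you have interchanged the roles of your two tools: strong convergence $M(x)\to M(0)$ gives lower semicontinuity of the number of negative squares, i.e. $\kappa_{-}(\Theta-M(0))\le\liminf_{x\uparrow0}\kappa_{-}(\Theta-M(x))$, whereas the bound $\kappa_{-}(\Theta-M(x))\le\kappa_{-}(\Theta-M(0))$ for each $x<0$ is what follows from monotonicity, since $\Theta-M(0)\le\Theta-M(x)$ in the form sense; both inequalities are true, but not for the reasons you assign them. Second, and more seriously for the proposition in its stated generality (no finiteness assumption on $\dim\kH$ or on $\kappa_{-}$, and $\Theta$ allowed to be a relation), your ``sum of jumps'' identity $\kappa_{-}(A_\Theta)=\sup_{x<0}\kappa_{-}(\Theta-M(x))$ rests on continuity and strict monotonicity of eigenvalue branches of $\Theta-M(x)$ and on the vanishing $\kappa_{-}(\Theta-M(x))=0$ for $x$ sufficiently negative. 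You derive the latter from $(M(x)h,h)\to-\infty$ for every $h\ne0$ (which does characterize $A_0=A_F$), but this pointwise divergence yields uniform divergence on the unit sphere only when $\dim\kH<\infty$ (a compactness/Dini argument); in infinite dimensions negative squares may persist for all $x<0$ without producing a zero crossing at any finite $x$, and when $\Theta$ has a nontrivial multivalued part or an unbounded operator part the eigenvalue-branch picture must be replaced by a form or spectral-flow argument as in \cite{DerMal91}. As written, your proof therefore covers the finite-dimensional case used in this paper, but not the statement as formulated.
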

Denote
  \begin{gather*}
M(x + i0):=s-\lim_{y\downarrow0}M(x+iy),\quad
d_M(x):=\rank(\imm(M(x+i0))),\\
M_h(z):=(M(z)h,h),\,\,\Omega_{ac}(M_h):=\{x\in\mathbb{R}:\,0<\imm (M_h(x+i0))<+\infty\},\,\, z\in\mathbb{C_+},\,\, h\in\mathcal{H},
  \end{gather*}
where  $M_h(x+i0):=\lim_{y\downarrow0}(M(x+iy)h,h)$. Since  $\imm(M_h(z))>0,\ z\in \mathbb{C}_+,$ the limit  $M_h(x+i0)$
exists and is finite for  a.e. $x\in\mathbb{R}.$
%
%

%
%
%

%
%

  %
  %
  %
  %
To state the next proposition we need a concept of the absolutely
continuous closure $\cl_{ac}(\delta)$ of a Borel subset
$\delta\subset \R$ introduced in \cite{BraMal02} and \cite{Ges08}.
We refer to \cite{Ges08,MalNei11} for the definition  and basic properties.
       \begin{proposition}\cite{BraMal02,MalNei11}\label{ac}
Let $A$  be a simple densely defined closed  symmetric operator  with equal deficiency indices in  separable Hilbert space  $\gotH$. Let $\Pi=\{\kH,\gG_0,\gG_1\}$ be a boundary triplet  for
$A^*$ and  $M(\cdot)$   the corresponding  Weyl function.
Assume also that  $\tau=\{h_k\}_{k=1}^N,\quad 1\leq N\leq\infty$  is a
total set in $\mathcal{H}$.\,
 Let  also
$A_B=A^*\upharpoonright\ker(\Gamma_1-B\Gamma_0)$, with
$B=B^*\in\mathcal{C}(\mathcal{H})$.   Then   the  following
assertions  hold.

$(i)$ The operator $A_0$ has  no singular  continuous spectrum
within the   interval  $(a,b)$ if   for  each
$k\in\{1,2,..,N\}$ the set $(a,b)\setminus\Omega_{ac}(M_{h_k})$ is
 countable.

 $(ii)$
 If the
limit $M(x+i0)$ exists for a.e.
$x\in\mathbb{R}$, then $\sigma_{ac}(A_0)=\cl_{ac}(\supp(d_M(x)))$.

$(iii)$
For  any  Borel subset   $\mathcal{D}\subset \mathbb{R}$ the  absolutely  continuous  parts
$A_0E^{ac}_{A_0}(\mathcal{D})$ and $A_BE^{ac}_{A_B}(\mathcal{D})$  of the operators $A_0E_{A_0}(\mathcal{D})$  and  $A_BE_{A_B}(\mathcal{D})$
are  unitarily equivalent if and only if\,\, $d_M(x)=d_{M_B}(x)$ for
a.e. $x\in\mathcal{D}.$
\end{proposition}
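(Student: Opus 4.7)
My plan is to handle the three assertions via a common scheme: use the $\gamma$-field $\gamma(z):=(\gG_0\upharpoonright\mathfrak{N}_z)^{-1}$, which intertwines $A_0$ with multiplication by the spectral parameter on a suitable model space, together with the classical Aronszajn--Donoghue description of the representing measure of a scalar Nevanlinna function through its boundary values. Simplicity of $A$ guarantees that $\{\gamma(z_0)h : h\in\kH\}$ is a generating subspace for $A_0$, so the scalar spectral measures $\mu_h$ of $A_0$ associated with $\gamma(z_0)h$, $h\in\kH$, determine the full spectral type of $A_0$, and each $\mu_h$ is the representing measure of the scalar Nevanlinna function $M_h(z)=(M(z)h,h)$ up to a harmless affine transform.

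For assertion $(i)$, I would first identify the absolutely continuous density of $\mu_h$ as $\tfrac{1}{\pi}\imm M_h(x+i0)$ and its singular part as carried by the set where $\imm M_h(x+i0)=+\infty$. If $(a,b)\setminus\Omega_{ac}(M_{h_k})$ is at most countable for every $k$, then the restriction of $\mu_{h_k}$ to $(a,b)$ is purely absolutely continuous, since a singular continuous measure cannot be supported on a countable set. Totality of $\{h_k\}_{k=1}^N$ then propagates this to the absence of singular continuous spectrum of $A_0$ on $(a,b)$.

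For assertion $(ii)$, a.e.\ existence of $M(x+i0)$ supplies a measurable field of boundary operators on $\kH$ whose imaginary parts determine, via the operator-valued analogue of the Fatou representation, the a.c.\ part of the spectral measure of $A_0$. A vector in $\gotH$ has a nontrivial a.c.\ component near $x$ iff $\imm M(x+i0)$ acts nontrivially on the corresponding direction in $\kH$; taking the a.c.\ closure of $\{x:d_M(x)>0\}$ yields $\sigma_{ac}(A_0)=\cl_{ac}(\supp d_M)$.

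Assertion $(iii)$ carries the main difficulty. Starting from Krein's resolvent formula
\[(A_B-z)^{-1}-(A_0-z)^{-1}=-\gamma(z)(B-M(z))^{-1}\gamma(\bar z)^*,\]
and passing, by Remark~2.3$(iii)$, to the boundary triplet $\Pi_B$ in which $A_B$ plays the role of $A_0$, one checks that the Weyl function associated with $A_B$ is $M_B(z)=(B-M(z))^{-1}$, whence
\[\imm M_B(x+i0)=(B-M(x+i0))^{-1}\,\imm M(x+i0)\,\bigl((B-M(x+i0))^{-1}\bigr)^*,\]
so that $d_{M_B}(x)=d_M(x)$ a.e.\ on the common a.c.\ support. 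The real obstacle is to convert equality of these pointwise multiplicities into unitary equivalence of the a.c.\ parts on $\mathcal{D}$; here I would compare the direct-integral representations of $A_0^{ac}$ and $A_B^{ac}$ built from $M$ and $M_B$ via the $\gamma$-field model, and invoke the spectral multiplicity theorem to match fibers pointwise. The converse implication then follows from the uniqueness of multiplicities in these decompositions.
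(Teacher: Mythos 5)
The paper does not prove Proposition~\ref{ac}: it is imported verbatim from \cite{BraMal02,MalNei11}, so your attempt has to be measured against those proofs, and in outline you do follow the same route (scalar measures attached to a total set via the $\gamma$-field and the boundary behaviour of $M_h$ for (i); a direct-integral/multiplicity comparison for (ii)--(iii)). Part (i) of your sketch is essentially sound, but the word ``propagates'' hides the one step that must be said: by simplicity and the identity $\gamma(z)=\bigl(I+(z-z_0)(A_0-z)^{-1}\bigr)\gamma(z_0)$, the $A_0$-cyclic subspaces generated by the vectors $\gamma(z_0)h_k$ span $\gotH$, and inside each such cyclic subspace every spectral measure is absolutely continuous with respect to $\mu_{h_k}$, whose singular part in $(a,b)$ is concentrated on the countable set $(a,b)\setminus\Omega_{ac}(M_{h_k})$; only with this does the absence of singular continuous spectrum of $A_0$ in $(a,b)$ follow.

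The genuine gaps are in (ii) and (iii). For (ii) your argument is a restatement of the conclusion: the claim that ``a vector has a nontrivial a.c.\ component near $x$ iff $\imm M(x+i0)$ acts nontrivially on the corresponding direction'' is precisely the theorem to be proved, and what is needed (and what \cite{MalNei11} actually constructs) is a direct-integral model of $A_0^{ac}$ whose fibre dimension equals $d_M(x)=\rank\imm M(x+i0)$ for a.e.\ $x$; without that construction neither $\sigma_{ac}(A_0)=\cl_{ac}(\supp d_M)$ nor the fibrewise matching you invoke in (iii) is available, since ``the spectral multiplicity theorem'' only applies after one knows that the multiplicity function of $A_0^{ac}$ (resp.\ $A_B^{ac}$) coincides a.e.\ with $d_M$ (resp.\ $d_{M_B}$) --- that identification is the main content of \cite{MalNei11}, not a fact you may quote. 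In (iii) there is also a concrete misstep: the identity $\imm M_B(x+i0)=(B-M(x+i0))^{-1}\imm M(x+i0)\bigl((B-M(x+i0))^{-1}\bigr)^*$, and with it your conclusion $d_{M_B}(x)=d_M(x)$ a.e., presupposes that $M(x+i0)$ exists and that $B-M(x+i0)$ is boundedly invertible for a.e.\ such $x$; for $\dim\kH=\infty$ this can fail on a set of positive measure, and for unbounded $B=B^*\in\mathcal{C}(\mathcal{H})$ even the formula $M_B=(B-M)^{-1}$ requires justification. Indeed, if that a.e.\ equality held unconditionally, assertion (iii) would assert that the a.c.\ parts are always unitarily equivalent, which is false in general --- so this step cannot serve as a lemma; the whole point of (iii) is that unitary equivalence on $\mathcal{D}$ is \emph{equivalent} to the a.e.\ coincidence of the two multiplicity functions, and that reduction is exactly what your plan leaves unproved. (In the paper's application $\kH=\C^m$, where the resolvent difference is finite rank and the equivalence of a.c.\ parts also follows from Kato--Rosenblum, but that is not the general statement you set out to prove.)
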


\section{Positive definite  functions. Complement   of the  Schoenberg theorem}
 Let  $(u,v)=u_1v_1+\ldots+u_nv_n$ be  a scalar
product  of  two vectors  $u=(u_1,\ldots,u_n)$ and
$v=(v_1,\ldots,v_n)$ from $\R^n$, $n\in\N$,   and  let
$|u|=\sqrt{(u,u)}$ be Euclidean norm.  Recall  some  basic facts
and  notions  of the theory of positive  definite  functions
\cite{Akh65}.
\begin{definition}\label{defpoz}\cite{Akh65,Wend_2005}
$(i)$  Function $g(\cdot):\R^n\to\C$ is said to be positive definite on
$\R^n$ and is referred to the class $\Phi(\R^n)$ if it is
continuous at $0$ and for any finite subsets
$X:=\{x_k\}_{k=1}^{m}\subset\R^n$ and
$\xi:=\{\xi_k\}_{k=1}^{m}\subset\C,\,\,m\in\mathbb{N}$ the following inequality holds
\begin{equation}\label{positiv}
\sum_{k,j=1}^{m}\xi_k\overline{\xi}_jg(x_k-x_j)\ge 0.\,
\end{equation}
$(ii)$  Moreover,  $g(\cdot)$ is said to  be  strictly  positive on
$\R^n$ if  the inequality ~\eqref{positiv} is strict for any
subset of  distinct points  $X=\{x_k\}_{k=1}^{m}\subset\R^n$
\,and for  any  subset $\xi=\{\xi_k\}_{k=1}^{m}\subset\C$  satisfying condition
$\sum_{k=1}^m|\xi_k|>0$.
  \end{definition}
Clearly,  positive definiteness of the function  $g(\cdot)$ is
equivalent to the non-negative  definiteness  of  the  matrix
$G(X)=\left(g_{kj}\right)_{k,j=1}^m$ with $g_{kj}=g(x_k-x_j)$ for
any subset $X=\{x_k\}_{k=1}^{m}\subset\R^n$, while its strict
positive definiteness  is equivalent  to (strict)
positive definiteness  of the matrix $G(X)$ for any
subset   of  distinct  points $X=\{x_k\}_{k=1}^{m}\subset\R^n$.

The  following  classical  Bochner theorem gives the   description of   the  class $\Phi(\mathbb{R}^n)$.
 \begin{theorem}\cite{Boch}\label{boch}
A  function $g(\cdot)$  is  positive  definite  on  $\R^n$ if  and
only if
\begin{equation*}
g(x)=\int\limits_{\R^n}e^{i(u,x)}d\mu(u),
\end{equation*}
where  $\mu$ is a  finite non-negative Borel  measure on  $\R^n$.
    \end{theorem}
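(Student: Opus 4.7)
The plan is to treat the two implications separately; only the converse is substantive. For the ``if'' direction I would substitute the integral representation into the quadratic sum and swap sum with integral (legal because $\mu$ is finite):
\begin{equation*}
\sum_{k,j=1}^m\xi_k\overline{\xi_j}g(x_k-x_j)=\int_{\mathbb{R}^n}\Bigl|\sum_{k=1}^m\xi_k e^{i(u,x_k)}\Bigr|^2 d\mu(u)\ge 0,
\end{equation*}
and continuity of $g$ at $0$ follows from dominated convergence with majorant $1\in L^1(d\mu)$.

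For the converse, let $g\in\Phi(\mathbb{R}^n)$. First I would extract elementary consequences of \eqref{positiv} applied to one- and two-point configurations: $g(0)\ge 0$, $g(-x)=\overline{g(x)}$, and $|g(x)|\le g(0)$, so $g$ is bounded. The strategy is Gaussian regularization: set $g_\varepsilon(x):=g(x)e^{-\varepsilon|x|^2}$ for $\varepsilon>0$. The Gaussian factor is itself positive definite (it arises as the Fourier transform of another Gaussian, which gives its Bochner representation explicitly), and since the Schur product of positive semidefinite matrices is positive semidefinite, $g_\varepsilon\in\Phi(\mathbb{R}^n)$ as well. Moreover $g_\varepsilon\in L^1(\mathbb{R}^n)\cap C_b(\mathbb{R}^n)$, so its Fourier transform $\widehat{g_\varepsilon}$ is continuous and bounded. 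The crucial point is to show $\widehat{g_\varepsilon}\ge 0$ pointwise: I would approximate the double integral $\iint g_\varepsilon(x-y)\overline{f(x)}f(y)\,dx\,dy$ by Riemann sums, each nonnegative by \eqref{positiv} applied to discretizations of $f$, to obtain nonnegativity of this bilinear form for all $f\in C_c(\mathbb{R}^n)$; choosing $f(x)=e^{i(u,x)}\chi_{B_R}(x)$ and invoking Plancherel as $R\to\infty$ would isolate $\widehat{g_\varepsilon}(u)\ge 0$.

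Defining the nonnegative measures $d\mu_\varepsilon:=\widehat{g_\varepsilon}(u)\,du$, which are finite with total mass $g_\varepsilon(0)=g(0)$ (here $\widehat{g_\varepsilon}\ge 0$ together with $g_\varepsilon\in L^1\cap C_0$ force $\widehat{g_\varepsilon}\in L^1$, so Fourier inversion at $x=0$ applies), a Helly-type selection would extract a weakly convergent subsequence $\mu_{\varepsilon_k}\to\mu$. Pointwise convergence $g_\varepsilon(x)\to g(x)$ combined with $g_\varepsilon(x)=\int e^{i(u,x)}d\mu_\varepsilon(u)$ yields $g(x)=\int e^{i(u,x)}d\mu(u)$ in the limit, provided no mass escapes to infinity; this is guaranteed by the uniform mass bound $\mu_\varepsilon(\mathbb{R}^n)\equiv g(0)$ and the fact that the prospective limit measure must carry the same total mass (evaluate the representation at $x=0$).

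The hard part will be verifying $\widehat{g_\varepsilon}\ge 0$ cleanly, since this is where the discrete inequality \eqref{positiv} must be bridged to a continuous, $L^2$-type statement; tightness of $\{\mu_\varepsilon\}$ and the Fubini-type manipulations are comparatively routine once boundedness and integrability of $g_\varepsilon$ are in hand. An alternative route avoiding Schur products would be to work directly with the GNS-like quadratic form $B(f,h):=\iint g(x-y)f(x)\overline{h(y)}\,dx\,dy$ on $C_c(\mathbb{R}^n)$, but the Gaussian regularization keeps the Fourier-analytic machinery more transparent.
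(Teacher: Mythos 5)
This statement is Bochner's theorem, which the paper quotes with a citation and does not prove, so there is no internal proof to compare against; I am therefore assessing your argument on its own terms. Your overall scheme is the standard one: Gaussian regularization $g_\varepsilon(x)=g(x)e^{-\varepsilon|x|^2}$, positive definiteness of $g_\varepsilon$ via the Schur product theorem, nonnegativity of $\widehat{g_\varepsilon}$ by bridging the discrete inequality \eqref{positiv} to the integrated form $\iint g_\varepsilon(x-y)f(x)\overline{f(y)}\,dx\,dy\ge 0$, Fourier inversion to produce finite measures $\mu_\varepsilon$ of total mass $g(0)$, and a compactness argument. Two smaller points: since Definition \ref{defpoz} only assumes continuity at $0$, you should note the standard inequality $|g(x)-g(y)|^2\le 2g(0)\,(g(0)-\re g(x-y))$, which upgrades this to (uniform) continuity everywhere and legitimizes the Riemann-sum step and the claim $g_\varepsilon\in L^1\cap C_b$; and with $f=e^{i(u,\cdot)}\chi_{B_R}$ the double integral equals $\iint_{B_R\times B_R} g_\varepsilon(x-y)e^{-i(u,x-y)}\,dx\,dy$, which must be normalized by $|B_R|$ before letting $R\to\infty$ (the Fej\'er-type average of $g_\varepsilon(z)e^{-i(u,z)}$ with an $L^1$ majorant), rather than by ``invoking Plancherel''. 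These are repairable details, not gaps.

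The genuine gap is the final limit step. Helly-type selection gives only vague convergence $\mu_{\varepsilon_k}\to\mu$, i.e.\ convergence against functions vanishing at infinity, and $e^{i(u,x)}$ is not such a function; a uniform bound on total mass does \emph{not} prevent mass from escaping to infinity (unit point masses drifting off to infinity have constant mass and vague limit $0$). Your justification --- that the limit measure ``must carry the same total mass: evaluate the representation at $x=0$'' --- is circular, because the representation of $g$ by the limit measure is precisely what is being proved. This is also exactly the place where continuity of $g$ at the origin must enter; a proof that never uses it at this stage cannot be complete, since without continuity at $0$ the theorem is false. The standard repair is either (a) a tightness estimate, e.g.\ the truncation inequality bounding $\mu_\varepsilon(\{u:|u_j|\ge 2/\delta\})$ by $\delta^{-1}\int_{-\delta}^{\delta}\bigl(g_\varepsilon(0)-\re g_\varepsilon(te_j)\bigr)\,dt$, which is uniformly small because $g_\varepsilon\to g$ locally uniformly and $g$ is continuous at $0$; then Prokhorov's theorem gives weak convergence and the representation passes to the limit (in effect L\'evy's continuity theorem); or (b) testing the vague limit against Gaussians: $\int \widehat{G_t}\,d\mu=\int G_t\,g\,dx$ for all $t$, and letting the Gaussian concentrate at the origin yields $\mu(\R^n)=g(0)$, which rules out loss of mass and upgrades vague to weak convergence. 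With either insertion your argument closes; as written, the concluding step does not follow.
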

\begin{definition}
  A function $f(\cdot)\in
C[0,+\infty)$    is said to  be radial   positive definite
function of  the class   $\Phi_n$, $n\in\N,$  if
$f(|\cdot|)$ is positive definite  on  $\R^n$, i.e., if
$f(|\cdot|)\in\Phi(\mathbb{R}^n)$ .
\end{definition}
%
%
 %
A characterization of the class $\Phi_n$ is given by the following Schoenberg theorem
 \cite{Sch38_1,Sch38} (see  also \cite[Theorem~5.4.2]{Akh65}).

 \begin{theorem}\label{sch}
 Function  $f(\cdot)$   belongs to  the   class $\Phi_n$  if and only if
\begin{equation}\label{schonberg}
f(t)=\int_{0}^{+\infty}\Omega_n(st)\,d\mu(s)\,,\quad t\ge 0\,,
\end{equation}
 where  $\mu$  is   a non-negative finite Borel measure on $[0,\infty)$,  and
\begin{equation}\label{kernel}
  \Omega_n(t)=\Gamma\left(\frac{n}{2}\right)\,
  \left(\frac{2}{t}\right)^{\frac{n-2}{2}}J_{\frac{n-2}{2}}(t)=
  \sum_{p=0}^{\infty}\left(-\frac{t^2}{4}\right)^p\frac{\Gamma\left(\frac{n}{2}\right)}{p!\,\Gamma\left(\frac{n}{2}+p\right)}\,,
 \end{equation}
\begin{equation}\label{Omaga_n}
\Omega_n(|x|)=\int_{S_{n}}e^{i(u,x)}d\nu_n(u)\,,\,x\in\R^n\,.
\end{equation}
 Here $\nu_n$ is the   Borel measure uniformly  distributed over  the unit  sphere
  $S_{n}$   centered   at  the  origin  and  $\nu_n(S_{n})=1$.
  \end{theorem}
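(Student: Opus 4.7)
The plan is to treat the two directions separately, using Bochner's theorem (Theorem~\ref{boch}) as the main tool.

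\textbf{Sufficiency.} Assume the integral representation \eqref{schonberg} and the spherical formula \eqref{Omaga_n}. Substituting \eqref{Omaga_n} into \eqref{schonberg} and applying Fubini's theorem, for $x\in\R^n$ I obtain
\begin{equation*}
f(|x|)=\int_{0}^{+\infty}\int_{S_n}e^{is(u,x)}\,d\nu_n(u)\,d\mu(s)
     =\int_{\R^n}e^{i(v,x)}\,d\sigma(v),
\end{equation*}
where $\sigma$ is the push-forward of the finite product measure $\mu\otimes\nu_n$ on $[0,+\infty)\times S_n$ under the map $(s,u)\mapsto su$. Since $\sigma$ is a finite non-negative Borel measure on $\R^n$, Bochner's theorem gives $f(|\cdot|)\in\Phi(\R^n)$, i.e., $f\in\Phi_n$.

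\textbf{Necessity.} Assume $f\in\Phi_n$. By Bochner's theorem there is a finite non-negative Borel measure $\mu_0$ on $\R^n$ with
\begin{equation*}
f(|x|)=\int_{\R^n}e^{i(u,x)}\,d\mu_0(u),\qquad x\in\R^n.
\end{equation*}
Since the left-hand side is invariant under the orthogonal group $O(n)$, for every $O\in O(n)$ and every $x\in\R^n$,
\begin{equation*}
\int_{\R^n}e^{i(u,x)}\,d\mu_0(u)=f(|Ox|)=\int_{\R^n}e^{i(u,Ox)}\,d\mu_0(u)=\int_{\R^n}e^{i(u,x)}\,d(\mu_0\circ O)(u).
\end{equation*}
The uniqueness part of Bochner's theorem then forces $\mu_0\circ O=\mu_0$, so $\mu_0$ is $O(n)$-invariant. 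Writing $\mu_0$ in polar coordinates, $u=sv$ with $s\ge 0$, $v\in S_n$, the invariance yields a factorization $d\mu_0(sv)=d\mu(s)\,d\nu_n(v)$ for some finite non-negative Borel measure $\mu$ on $[0,+\infty)$. Setting $t=|x|$ and using \eqref{Omaga_n}, Fubini gives exactly \eqref{schonberg}.

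\textbf{The Bessel-function formula for $\Omega_n$.} It remains to verify \eqref{kernel}: that the spherical mean defined by the right-hand side of \eqref{Omaga_n} (which by rotation invariance depends only on $t=|x|$) equals $\Gamma(n/2)(2/t)^{(n-2)/2}J_{(n-2)/2}(t)$. Choosing $x=(t,0,\dots,0)$ and integrating in spherical coordinates with polar angle $\theta$ around the $x_1$-axis,
\begin{equation*}
\Omega_n(t)=\frac{1}{|S_n|}\int_{S_n}e^{i(u,x)}\,dS(u)=\frac{|S_{n-1}|}{|S_n|}\int_0^\pi e^{it\cos\theta}\sin^{n-2}\theta\,d\theta,
\end{equation*}
and this last integral is the classical Poisson representation of $J_{(n-2)/2}$, giving both the Bessel expression and, by expanding $e^{it\cos\theta}$ in power series and using $\int_0^\pi\cos^{2p}\theta\sin^{n-2}\theta\,d\theta=B(p+\tfrac12,\tfrac{n-1}{2})$, the power series in \eqref{kernel}.

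\textbf{Main obstacle.} The only step requiring real work is the orthogonal invariance of $\mu_0$, for which I rely on the uniqueness of the Fourier--Bochner representation, and the explicit identification of the spherical mean as a Bessel function; both are standard but need to be written out carefully.
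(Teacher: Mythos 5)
The paper does not prove Theorem \ref{sch} at all: it is quoted as a classical result with references to Schoenberg \cite{Sch38_1,Sch38} and Akhiezer \cite[Theorem~5.4.2]{Akh65}, and the only thing proved in Section~3 is the complement, Theorem \ref{thzast}. So there is no in-paper argument to compare with; judged on its own, your proof is the standard one (essentially Schoenberg's original argument via Bochner's theorem) and it is correct. Two small points are worth tightening. First, in the necessity step the polar factorization $d\mu_0(sv)=d\mu(s)\,d\nu_n(v)$ needs a word about a possible atom of $\mu_0$ at the origin, where the angular variable is meaningless (it causes no harm since $e^{i(0,x)}=1=\Omega_n(0)$, but say so); alternatively you can avoid disintegration entirely by averaging over the orthogonal group: invariance of $\mu_0$ gives $f(|x|)=\int_{\R^n}\bigl(\int_{O(n)}e^{i(Ou,x)}\,dO\bigr)d\mu_0(u)=\int_{\R^n}\Omega_n(|u|\,|x|)\,d\mu_0(u)$, and then $\mu$ is simply the push-forward of $\mu_0$ under $u\mapsto|u|$. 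Second, your spherical-slice evaluation of $\Omega_n$ through $\int_0^\pi e^{it\cos\theta}\sin^{n-2}\theta\,d\theta$ (Poisson's integral) presupposes $n\ge2$; for $n=1$ the sphere is $\{\pm1\}$ and $\Omega_1(t)=\cos t$ should be checked directly, which is also consistent with \eqref{kernel}. Finally, in the sufficiency direction note explicitly that $|\Omega_n|\le1$ and dominated convergence give $f\in C[0,\infty)$, as required for membership in $\Phi_n$.
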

  \begin{remark}
  It is not difficult to  show  that for any  $n\in\mathbb{N}$ the strict inclusion
   $\Phi_{n+1}\subset\Phi_n$  takes  place.
  Indeed, it  is  known  ~\cite{Go}, \cite[Theorem~5]{Zastavnyi_1992}, \cite[Example~1]{Zastavnyi_2000}
  that $(1-|t|)_+^{\delta}\in\Phi_{n}$ if  and only if $\delta\ge\frac{n+1}{2}$.
Earlier Askey  ~\cite{Askey_1973} and Trigub \cite{Trigub_1989}
considered  the case of natural  $\delta$ and proved the necessity
for odd $n$.

On the other hand, the strict  inclusion
   $\Phi_{n+1}\subset\Phi_n$   follows  from another  example. Namely,
  $\mathop{\rm Re}\left( e^{-zt}\right)\in\Phi_n$ if and only if $|\arg z|\le{\pi}/{(2n)}, \,\,\, z\in\C$ (see ~\cite[Theorem~3]{Zastavnyi_1992}).
    Therefore  $\Omega_n\in\Phi_n$, but
   $\Omega_n\not\in\Phi_{n+1}$, $n\in\mathbb{N}$. 
  \end{remark}
Our    complement to  the Schoenberg theorem   reads  as  follows.
  \begin{theorem}\label{thzast}
 If   $f(\cdot)\in\Phi_n$, $n\ge 2$, and
$f(\cdot)\not\equiv const$ on  $[0,+\infty)$, then  the  function $f(|\cdot|)$
is strictly  positive  definite on  $\R^n$.
\end{theorem}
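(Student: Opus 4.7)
I will combine Schoenberg's representation \eqref{schonberg} with a direct argument that the kernel $\Omega_n(s|\cdot|)$ is strictly positive definite on $\R^n$ for every $s>0$ (using $n\geq2$). Writing $f(t)=\int_0^{+\infty}\Omega_n(st)\,d\mu(s)$, the case $\supp\mu\subset\{0\}$ would give $f\equiv\mu(\{0\})\Omega_n(0)=\mu(\{0\})$, a constant; by hypothesis, then, $\supp\mu\cap(0,\infty)\neq\emptyset$. For distinct $x_1,\dots,x_m\in\R^n$ and $\xi_1,\dots,\xi_m\in\C$ with $\sum_k|\xi_k|>0$, substituting \eqref{Omaga_n} and swapping the finite sum with the integrals yields
\begin{equation*}
\sum_{k,j=1}^m\xi_k\overline{\xi_j}\,f(|x_k-x_j|)=\int_0^{+\infty}\!A(s)\,d\mu(s),\qquad A(s):=\int_{S_n}\Bigl|\sum_{k=1}^m\xi_k e^{is(u,x_k)}\Bigr|^2 d\nu_n(u).
\end{equation*}
Since $A\ge0$ and $A$ is continuous, it suffices to show $A(s)>0$ for every $s>0$; continuity together with $\supp\mu\cap(0,\infty)\neq\emptyset$ will then force the integral to be strictly positive.

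The crux is therefore to show that the function $\phi_s(u):=\sum_{k=1}^m\xi_k e^{is(u,x_k)}$ does not vanish identically on $S_n$. I argue by contradiction: suppose $\phi_s|_{S_n}\equiv 0$. Because $n\ge 2$, I choose orthogonal unit vectors $e_1,e_2\in\R^n$ such that the real numbers $\alpha_k:=(e_1,x_k)$ are pairwise distinct; this is possible because the forbidden set for $e_1$ is the $\nu_n$-null finite union of great subspheres $\{e\in S_n:(e,x_i-x_j)=0\}$, $i\neq j$. Setting $\beta_k:=(e_2,x_k)$ and $u(\theta):=\cos\theta\,e_1+\sin\theta\,e_2$, the map $\theta\mapsto\phi_s(u(\theta))$ is entire and vanishes on $\R$, hence on $\C$. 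Specializing $\theta=i\tau$ gives
\begin{equation*}
\sum_{k=1}^m\xi_k e^{is\alpha_k\cosh\tau}\,e^{-s\beta_k\sinh\tau}=0,\qquad\tau\in\R.
\end{equation*}

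Let $c:=\min_k\beta_k$ and $K:=\{k:\beta_k=c\}$. Multiplying through by $e^{sc\sinh\tau}$ and letting $\tau\to+\infty$, the terms with $k\notin K$ decay exponentially to $0$, so the exponential sum $P(T):=\sum_{k\in K}\xi_k e^{is\alpha_k T}$ satisfies $P(\cosh\tau)\to 0$; equivalently $P(T)\to0$ along reals $T\to+\infty$. Since the frequencies $s\alpha_k$ for $k\in K$ are pairwise distinct, a non-zero such exponential sum has Bohr--Parseval mean square $\lim_{T\to\infty}T^{-1}\!\int_0^T|P|^2dt=\sum_{k\in K}|\xi_k|^2$, which must be $0$ because $|P|\to0$. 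Hence $\xi_k=0$ for all $k\in K$; iterating the same argument with the next-smallest value of $\beta_k$ (and removing the already-nullified indices) forces every $\xi_k$ to vanish, contradicting $\sum_k|\xi_k|>0$. This proves $A(s)>0$ for all $s>0$ and completes the proof.

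\textbf{Main obstacle.} The nontrivial step is the nonvanishing of $\phi_s$ on $S_n$. The hypothesis $n\ge 2$ is used essentially twice: first, to pick two orthogonal unit vectors (impossible for $n=1$), and second, to have a genuine two-plane slice along which the analytic continuation $\theta\to i\tau$ produces a hierarchy of exponential growth rates allowing one to extract coefficients via dominant-term asymptotics combined with the Bohr mean value. Once this lemma is in hand, the measure-theoretic conclusion is routine.
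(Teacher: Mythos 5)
Your argument is correct, and at the top level it coincides with the paper's first proof of Theorem \ref{thzast}: use the Schoenberg representation \eqref{schonberg}, note $\mu((0,\infty))>0$ because $f\not\equiv const$, and reduce everything to the strict positivity of $A(s)=\int_{S_n}\bigl|\sum_k\xi_k e^{is(u,x_k)}\bigr|^2d\nu_n(u)$ for $s>0$, i.e.\ to the statement that a nontrivial exponential sum cannot vanish identically on a sphere (the paper's Lemma \ref{lezast}). Where you genuinely diverge is in the proof of that lemma. The paper restricts to a two-plane through the point of maximal norm and then either (first proof) expands along the circle via the Jacobi--Anger generating function, extracts the relations $\sum_p\xi_pe^{-ik\varphi_p}J_k(r_p)=0$, and isolates one coefficient using the asymptotics of $J_k$ together with Ces\`aro means, or (second proof) continues analytically in the angle and kills one coefficient at a time by tuning the real part of the angle so that a single term dominates as the imaginary part tends to $+\infty$. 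You instead choose a \emph{generic} two-plane so that the projections $\alpha_k=(e_1,x_k)$ are pairwise distinct, pass to imaginary angle, stratify the indices by the growth rates $\beta_k=(e_2,x_k)$, and annihilate an entire stratum at once via the Bohr--Parseval mean value for exponential sums with distinct frequencies, then iterate over strata. Your route avoids the Bessel-function machinery and the bookkeeping with the maximal-norm shell and ordered angles, and is arguably more systematic; the price is the (elementary) mean-value identity for trigonometric sums, which the paper's proofs do not need. One small step you use tacitly, as does the paper, is that $A(s)=0$ forces $\phi_s\equiv0$ on $S_n$ by continuity of $\phi_s$ and the full support of $\nu_n$; it is worth a sentence but is not a gap.
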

We start with  the  following  auxiliary  lemma   and   present two    different proofs.
\begin{lemma}\label{lezast}
Let $S^r_n(y)$ be  a sphere of  radius $r$  in  $\R^n$    centered  at  $y$ and $n\ge 2$,  let also $X=\{x_k\}_{k=1}^{m}$ be a
subset of $\R^n$ such that $x_p\ne x_j$ as $p\ne j$  and
$\xi=\{\xi_k\}_{k=1}^{m}\subset\C$.  If
\begin{equation}\label{lindep}
\sum_{p=1}^{m}\xi_p e^{i(u,x_p)}=0,\quad \text{for  all}\qquad u\in S^r_n(y),
\end{equation}
 then  $\xi_p=0$ for  $p\in\{1,..,m\}$.
\end{lemma}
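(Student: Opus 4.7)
The plan is to prove the lemma by analytically continuing the identity $F|_{S^r_n(y)}\equiv 0$ along a one-parameter complex curve, and then peeling off the coefficients $\xi_p$ one at a time via an asymptotic-domination argument. The hypothesis $n\ge 2$ enters precisely because the curve needs two coordinates; the statement genuinely fails at $n=1$, since for example $e^{i\pi u/r}-e^{-i\pi u/r}$ vanishes on the $0$-sphere $\{\pm r\}$ despite having distinct exponents.

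First I would perform two normalizations. A translation $u\mapsto u+y$ only multiplies each $\xi_p$ by the phase $e^{i(y,x_p)}$, and a dilation $u\mapsto ru$ only rescales $x_p$ by $r$; neither affects distinctness of the $x_p$ nor the nonvanishing of any $\xi_p$. Hence it suffices to treat $y=0$, $r=1$, so that $F(u):=\sum_{p=1}^{m}\xi_p e^{i(u,x_p)}$ vanishes on the unit sphere $S^{n-1}\subset\R^n$. Moreover, the unit vectors $\hat e\in\R^n$ for which $(\hat e,x_p-x_q)=0$ for some $p\ne q$ form a finite union of proper hyperplanes; picking $\hat e$ off this set and rotating coordinates so that $\hat e=e_2$, I may also assume that the second coordinates $x_1^{(2)},\dots,x_m^{(2)}$ are pairwise distinct.

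The heart of the proof is the entire curve $u(\tau):=(\cos\tau,\sin\tau,0,\dots,0)\in\C^n$, $\tau\in\C$. For real $\tau$ one has $u(\tau)\in S^{n-1}$, so $F(u(\tau))=0$; since $F\circ u$ is entire in $\tau$, the identity theorem upgrades this to $F(u(\tau))\equiv 0$ on all of $\C$. Specialising to $\tau=it$ and using $\cos(it)=\cosh t$, $\sin(it)=i\sinh t$ gives
\[
\sum_{p=1}^{m}\xi_p\,e^{-x_p^{(2)}\sinh t}\,e^{ix_p^{(1)}\cosh t}=0,\qquad t\in\R.
\]
Relabelling so that $x_1^{(2)}<x_2^{(2)}<\dots<x_m^{(2)}$, isolating the $p=1$ summand, taking moduli (using $|e^{ix_p^{(1)}\cosh t}|=1$) and dividing by $e^{-x_1^{(2)}\sinh t}$ yields
\[
|\xi_1|\le\sum_{p=2}^{m}|\xi_p|\,e^{-(x_p^{(2)}-x_1^{(2)})\sinh t}\longrightarrow 0\quad\text{as }t\to+\infty,
\]
since every exponent $-(x_p^{(2)}-x_1^{(2)})\sinh t$ with $p\ge 2$ tends to $-\infty$. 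Hence $\xi_1=0$, and iterating the same argument on the reduced sum peels off $\xi_2,\dots,\xi_m$ in turn.

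The conceptual obstacle is choosing the right complex curve: it must lie on the real sphere for real parameter, and on analytic continuation must involve at least two coordinates of each $x_p$ so that the resulting exponents $-x_p^{(2)}\sinh t$ separate in the limit. Once this curve is in place, the generic rotation ensuring distinctness of the $x_p^{(2)}$ and the final asymptotic bookkeeping are routine.
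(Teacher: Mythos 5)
Your proof is correct. In essence it is a variant of the paper's second proof: both restrict the exponential sum to a circle lying on the sphere, use the identity theorem to continue the resulting trigonometric sum to complex values of the angular parameter, and then kill one coefficient at a time by letting the imaginary part tend to $+\infty$ so that a single exponential dominates. The difference is in the separation device. The paper rotates a point of maximal norm onto $R_0e_1$, projects the remaining points onto $\Span\{e_1,e_2\}$, and separates them through the radii $r_p$ and angles $\varphi_p$ of the projections, which forces the bookkeeping with $m'$, the choice $x=\varphi_1+\pi/2$, and the normalization by $e^{-R_0\sh y}$; you instead pick a generic direction $\hat e$ (off the finite union of hyperplanes orthogonal to the differences $x_p-x_q$) and rotate it to $e_2$, so that the values $(x_p,e_2)$ are pairwise distinct, after which the curve $\tau\mapsto(\cos\tau,\sin\tau,0,\dots,0)$ at $\tau=it$ yields exponents $-x_p^{(2)}\sh t$ that are automatically separated and the coefficients peel off by a straightforward iteration along the same curve. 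This buys a cleaner argument (no maximal-norm reduction, no case $m'<m$), at the small price of the genericity/rotation step; your observation that the statement fails for $n=1$ correctly locates where $n\ge2$ enters. Note the paper also gives a first, genuinely different proof via the Jacobi--Anger expansion into Bessel functions, the asymptotics $k!\,2^kJ_k(x)\sim x^k$, and Ces\`aro means of the resulting Fourier coefficients, which your argument does not touch.
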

\begin{proof}[\textbf{The first  proof}]
Without loss of generality we  may assume that $y=0$ and  $r=1$.  Let  also   for  definiteness $\xi_1\neq 0$.
  For $m=1$
the  statement is obvious. Put $m\geq 2$.
Denote
$\max\limits_{1\leq p\leq m}|x_p|=R_0>0$.  Let $\{e_j\}_{j=1}^n$
 be  the   standard orthogonal  basis  in $\R^n$. We may assume  that  $x_1=R_0e_1$.
 Let $P$  be the orthogonal  projector onto $\Span\{e_1,e_2\}$.  Then $Px_p=r_p(\cos\varphi_pe_1+\sin\varphi_pe_2)$   with  $0\leq r_p=|Px_p|\leq|x_p|\leq R_0, \quad 0\leq\varphi_p<2\pi.$
 Assume that
$r_1=r_2=...=r_{m'}=R_0$   and $r_p<R_0$ as $p>m'$ (if $m'<m$).
Then $Px_p=x_p, \,\,\, 1\leq p\leq m'$. Evidently, we   may   also
assume  that $0=\varphi_1<\varphi_2<\ldots<\varphi_{m'}<2\pi$. Put
in \eqref{lindep} $u= \cos\varphi e_1 + \sin\varphi e_2\in
S^1_n(0), \,\,\varphi\in \R.$
   Therefore the
equality  \eqref{lindep}  takes   the   form
\begin{equation}\label{lindepcos}
\sum\limits_{p=1}^m\xi_p\exp(ir_p\cos(\varphi-\varphi_p))=0,\quad
\varphi\in \mathbb{R}.
\end{equation}
%
%

  It  is well   known
that generating  function    for the Bessel  functions  admits the
following  representation \cite[chapter 19,\S 3]{Fikh}
\begin{equation}\label{genf}
e^{\tfrac a{2}(z-z^{-1})}=\sum\limits_{k=-\infty}^\infty
J_k(a)z^k,\qquad z\neq 0, \quad a\in\mathbb{C}.
\end{equation}
Putting in \eqref{genf} $z=ie^{i(\varphi-\varphi_p)}$ and $a=r_p,
\quad p\in\{1,..,m\}$, we arrive at the  following   expansion
into Fourier series for the functions
$f_p(\varphi)=\exp(ir_p\cos(\varphi-\varphi_p))$
\begin{equation}\label{genfcos}
\exp(ir_p\cos(\varphi-\varphi_p))=\sum\limits_{k=-\infty}^\infty
J(r_p)i^k e^{-ik\varphi_p}e^{ik\varphi}.
\end{equation}
It is easily seen that from  \eqref{lindepcos}  and
\eqref{genfcos} follows  the equality
\begin{equation*}
\sum\limits_{k=-\infty}^\infty\left[\sum\limits_{p=1}^m \xi_p
\exp(-ik\varphi_p)J_k(r_p)i^k\right]e^{ik\varphi}=0.
\end{equation*}
Therefore
\begin{equation}\label{bessel}
\sum\limits_{p=1}^m \xi_p \exp(-ik\varphi_p)J_k(r_p)=0,\quad k\in
\mathbb{N}.
\end{equation}
Using the  following   expansion into  series for $J_k(x)$
(\cite[Section 2]{Olv78})
\begin{equation*}
J_k(x)={\left(\tfrac
x{2}\right)}^k\sum\limits_{p=0}^\infty{\left(-\tfrac
{x^2}{4}\right)}^p\tfrac 1{p!\Gamma(k+p+1)},
\end{equation*}
we get $k!2^k J_k(x)=x^k[1+a_k(x)],$ where
\begin{equation*}
|a_k(x)|\leq (k+1)^{-1}\left[\exp\left(\tfrac
{x^2}{4}\right)-1\right],\quad x\in\mathbb{R},\,\, k\in\mathbb{N}.
\end{equation*}
Multiplying   the equality \eqref{bessel}  by $k!2^kR_0^{-k},$ we
obtain
\begin{equation*}
\sum\limits_{p=1}^{m'}\xi_p
\exp(-ik\varphi_p)[1+a_k(R_0)]=-\sum\limits_{p=m'+1}^m \xi_p
\exp(-ik\varphi_p){(r_pR_0^{-1})}^k[1+a_k(r_p)],
\end{equation*}
where right-hand side  equals  0 if $m'=m$. If $m'<m$, then
$r_pR_0^{-1}<1$ as $p>m'$. Thereby,
\begin{equation}\label{seq}
\lim\limits_{k\rightarrow \infty}\sum\limits_{p=1}^{m'} \xi_p
\exp(-ik\varphi_p)=0.
\end{equation}
Since arithmetic means of   the  sequence in \eqref{seq} converges
to $\xi_1(\varphi_1=0)$, then $\xi_1=0$. Thus, the  theorem is
proved.

\textbf{The second  proof.}
As  in the first  proof,  we  reduce considerations  to   investigation  of equality   \eqref{lindepcos}.
 By  uniqueness  theorem for analytic  functions,  equality  \eqref{lindepcos}
  remains  valid  for  any $z\in\C$
 \begin{equation}\label{lindepcos'}
\sum\limits_{p=1}^m\xi_p\exp(ir_p\cos(z-\varphi_p))=0,\qquad
z=x+iy\in \mathbb{C}.
\end{equation}
 Since,  by Euler formula, $\cos(z-\varphi_p)=
(e^{i(z-\varphi_p)}+e^{-i(z-\varphi_p)})/2$,  we  have
\begin{equation*}\label{mod}
|\exp(ir_p\cos(z-\varphi_p))|= \exp(-\frac{r_p}{2}\imm
(e^{i(z-\varphi_p)}+e^{-i(z-\varphi_p)}))=\exp(r_p\sh
y\sin(x-\varphi_p)).
\end{equation*}


   Denote $\psi_p(z)=\arg(\exp(ir_p\cos(z-\varphi_p)))\in [0,2\pi).$  Thus,  by
\eqref{lindepcos'},
\begin{equation}\label{assymp}
\sum\limits_{p=1}^m \xi_p\left[\exp(r_p\sh
{y}\sin(x-\varphi_p))\right]e^{i\psi_p(z)}=0.
\end{equation}
Multiplying   \eqref{assymp} by $\exp(-R_0\sh y)$, we arrive at
\begin{equation}\label{assymp1}
\sum\limits_{p=1}^{m'}\xi_p[\exp(R_0\sh
y(\sin(x-\varphi_p)-1))]e^{i\psi_p(z)}+\sum\limits_{p=m'+1}^{m}\xi_p[\exp(\sh
y(r_p\sin(x-\varphi_p)-R_0))]e^{i\psi_p(z)}=0.
\end{equation}
Setting  $x=\varphi_1+\frac{\pi}{2}$ in \eqref{assymp1}   and
passing  to the  limit as $y\rightarrow +\infty$, we obtain
$\xi_1=0$.

\end{proof}
 \begin{remark}
 The  first   proof of     Lemma \ref{lezast} belongs to  Viktor Zastavnyi. Chronologically it was obtained earlier then the second proof proposed by the other two  authors.
 \end{remark}
\begin{remark}
It might be  easily  shown   that  Lemma  \ref{lezast} is not  valid for  any  manifold  in $\R^n$.
Below  we  give   the explanatory example.
It is  obvious  that  any  hyperplane  $\pi_a$ in $\R^n$, $n\ge 2$, which  does not  contain the origin, is  given  by
\begin{equation*}
\pi_a=\{u\in \R^n:\,(u,a)=1,\quad\text{where}\quad a\in\R^n,\,\,a\ne 0\}.
\end{equation*}
  Then on such  hyperplane  the  expression   $1+\exp(i(u,\pi a))$ is identically zero.  Thus, for  any  finite  set of hyperplanes  with  the  above   property there   exists a  set  of  points
 $y_k\in\R^n$, $y_k\ne 0$, $k\in\{1,..,q\}$   such   that
 \[
  0\equiv\prod_{k=1}^q\left(1+e^{i(u,\pi y_k)}\right)=
   \sum_{p=1}^{m}\xi_p e^{i(u,x_p)},\quad  u\in \bigcup\limits_{k=1}^q\pi_{y_k}.
 \]
  Here   $m\ge 2$, $\xi_p>0,\,\, p\in\{1,..,m\}$
 and  $X=\{x_p\}_{p=1}^{m}$  is  a  subset of $\R^n$ such that $x_p\ne x_j$ as $p\ne j$.
\end{remark}

Now   we are  ready to prove the  complement of  Theorem \ref{sch}. Below we  present  two slightly  different proofs.

\begin{proof}[\textbf{The first proof of Theorem \ref{thzast}}]

  Let $\mu$ be  non-negative  finite Borel measure on  $[0,+\infty)$
from  the representation ~\eqref{schonberg} for the function  $f$.
It is obvious that  $\mu((0,+\infty))>0$ (otherwise, $f(t)\equiv
f(0)$).

Let  $X=\{x_k\}_{k=1}^{m}\subset \R^n$ and
$\xi=\{\xi_k\}_{k=1}^{m}\subset\C$ be  subsets   such that $x_p\ne
x_j$ as  $p\ne j$  and $\sum_{k=1}^m|\xi_k|>0$. From   Lemma
~\ref{lezast} with   $y=0$, $r=1$ it   follows that

 \begin{equation*}
 \int_{S_n}\left|\sum_{k=1}^{m}\xi_k
 e^{i(u,sx_k)}\right|^2\,d\nu_n(u)>0\quad \text{for  any}\quad s>0,
 \end{equation*}
 where $\nu_n$ is   the  measure from   representation ~\eqref{Omaga_n}.
 Since  $\mu((0,+\infty))>0$, we  get
\begin{equation*}
\sum_{k,j=1}^{m}\xi_k\overline{\xi}_jf(|x_k-x_j|)=
 \int_{0}^{+\infty}\left(\;
 \int_{S_n}\left|\sum_{k=1}^{m}\xi_k e^{i(u,sx_k)}\right|^2\,d\nu_n(u)
 \right)\,d\mu(s)>0\,.
\end{equation*}
 Thus, the theorem is proved.

 \textbf{The  second  proof.}  By  Theorem \ref{boch}, we have
\begin{equation*}
f(|x|)=\int_{\R^n}e^{i(u,x)}d\mu(u)\,,\quad x\in\R^n\,,
\end{equation*}
where  $\mu$  is  non-negative   finite   Borel  measure on $\R^n$.
  It is  easily  seen  that
$\supp\mu$    is a  radial  set, i.e., if $x_0\in\supp\mu$, then
the support  contains  sphere $S^r_n(0)$  of  radius  $r=|x_0|\ge 0$ centered   at the
origin. If  $f(t)\not\equiv
f(0)$, then   $\supp\mu$  contains  a sphere $S^r_n(0)$.

 Let  $f(t)\not\equiv  const$, $m\in\N$, and  the  set
$X=\{x_k\}_{k=1}^{m}\subset \R^n$  is  such   that   $x_k\ne x_j$
as $k\ne j$. Consider   the  following  quadratic  form  in $\C^m$
\begin{equation*}
Q(\xi):=\sum_{k,j=1}^{m}\xi_k\overline{\xi}_jf(|x_k-x_j|)=
 \int_{\R^n}\left|\sum_{k=1}^{m}\xi_k e^{i(u,x_k)}\right|^2\,d\mu(u)\ge
 0\,,\quad\xi=\{\xi_k\}_{k=1}^m\subset\C.
\end{equation*}
 If   $Q(\xi)=0$, then  the  function   $g(u):=\sum_{k=1}^{m}\xi_k
 e^{i(u,x_k)}$ equals  0 on $\supp\mu$   and   therefore  equals  0
 on  $S^r_n(0)$.   Finally, Lemma \ref{lezast}
yields that  $\xi_k=0,\,\,k\in\{1,..,m\}$.
 \end{proof}
  \begin{definition}
A  function
  $f(\cdot)\in C[0,\infty)\cap C^{\infty}{(0,+\infty)}$ is called   completely  monotonic  function on   $[0,\infty)$ of  the  class $M{[0,\infty)}$ if the inequality  $(-1)^kf^{(k)}(t)\ge 0$  holds for all  $k\in\mathbb{Z}_+$ and  $t>0$.
  \end{definition}
  Schoenberg noted in  \cite{Sch38_1,Sch38} that
   {\it the function  $f(\cdot)\in\bigcap\limits_{n\in\mathbb{N}}\Phi_n$  if and only if
  $f(\sqrt{\cdot})\in M[0,\infty)$.}
 Thus, it is easily implied by  Schoenberg  theorem  that $f(\cdot)\in M [0,\infty)$ yields  $f(\cdot)\in\bigcap\limits_{n\in\mathbb{N}}\Phi_n$.
  \begin{corollary}\cite[Theorem~7.14]{Wend_2005}\label{shoencor}
  If $f(\cdot)\in M[0,\infty)$ and  $f(\cdot)\not\equiv const$ on  $[0,+\infty)$,  then
 the  function  $f(|\cdot|)$  is  strictly  positive  definite on   $\mathbb{R}^n$  for any  $n\in\mathbb{N}$.
  \end{corollary}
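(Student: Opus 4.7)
The plan is to leverage two structural facts already in hand: the complement to Schoenberg's theorem (Theorem~\ref{thzast}) and Bernstein's characterization of completely monotonic functions as Laplace transforms of finite non-negative Borel measures on $[0,\infty)$. The remark immediately preceding the corollary records the inclusion $M[0,\infty)\subset\bigcap_{n\in\N}\Phi_n$, so the hypothesis $f\in M[0,\infty)$ already yields $f\in\Phi_n$ for every $n$. The only remaining task is therefore to upgrade ``positive definite'' to ``strictly positive definite.''

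For $n\geq 2$ this upgrade is automatic: Theorem~\ref{thzast} asserts that any non-constant element of $\Phi_n$ is strictly positive definite on $\R^n$. The only case not covered by Theorem~\ref{thzast} is $n=1$, which I would handle by a short direct computation. By Bernstein's theorem,
\begin{equation*}
f(t)=\int_{[0,\infty)} e^{-st}\,d\mu(s),\qquad t\geq 0,
\end{equation*}
for some finite non-negative Borel measure $\mu$, and $f\not\equiv const$ forces $\mu((0,\infty))>0$. For any distinct $x_1,\ldots,x_m\in\R$ and $\xi=(\xi_1,\ldots,\xi_m)\in\C^m$ with $\sum_k|\xi_k|>0$, Fubini yields
\begin{equation*}
\sum_{k,j=1}^m \xi_k\overline{\xi}_j\,f(|x_k-x_j|)=\Big|\sum_{k=1}^m\xi_k\Big|^2\mu(\{0\})+\int_0^{\infty}\Bigl(\sum_{k,j=1}^m \xi_k\overline{\xi}_j\,e^{-s|x_k-x_j|}\Bigr) d\mu(s),
\end{equation*}
and the inner quadratic form is strictly positive for every $s>0$ by the strict positive definiteness of $e^{-s|\cdot|}$ on $\R$---a fact that the authors already cite for every $\R^n$ in the sketch of the boundary-triplet surjectivity argument. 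Since $\mu$ has positive mass on $(0,\infty)$, the integral is strictly positive, and the whole expression is strictly positive, settling $n=1$.

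No serious obstacle is expected; the heavy lifting has already been done in Theorem~\ref{thzast} and in the classical Schoenberg--Bernstein circle. It is worth noting that the Bernstein argument works verbatim in every dimension, since $e^{-s|\cdot|}$ is strictly positive definite on all of $\R^n$, so one could in fact give a single dimension-free proof of the corollary without invoking Theorem~\ref{thzast} at all. I would nonetheless route the argument through Theorem~\ref{thzast} for $n\geq 2$, as that is precisely the contribution of this section and places the corollary naturally as its first consequence.
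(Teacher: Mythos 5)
Your main line for $n\ge 2$ is exactly the derivation the paper intends: the remark preceding the corollary gives $M[0,\infty)\subset\bigcap_{n}\Phi_n$, and Theorem \ref{thzast} upgrades positive definiteness to strict positive definiteness for non-constant $f$. (The paper itself offers no written proof, only the citation to Wendland, but this is plainly the route that makes the statement a corollary of Theorem \ref{thzast}.)

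The genuine problem is your treatment of $n=1$. You justify the strict positive definiteness of $e^{-s|\cdot|}$ on $\R$ by pointing to the place where ``the authors already cite'' it --- but in this paper that fact lives in Example \ref{rempoz}(2), which is itself deduced from Corollary \ref{shoencor}, the very statement you are proving. As written, the $n=1$ case (and likewise your proposed ``dimension-free'' variant for all $n$) is circular within the paper's logic. The gap is easy to close in either of two ways. Most economically: since $f\in\Phi_2$ and $f\not\equiv const$, Theorem \ref{thzast} gives strict positive definiteness of $f(|\cdot|)$ on $\R^2$; any finite set of distinct points of $\R$ embeds isometrically as a set of distinct points of $\R^2$, the Gram matrix $\bigl(f(|x_k-x_j|)\bigr)_{k,j}$ is unchanged, so strict positive definiteness on $\R^2$ already yields it on $\R$, with no Bernstein computation needed. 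Alternatively, keep your Bernstein argument but prove the needed fact independently: $e^{-s|x|}=\int_\R e^{iux}\,\tfrac{1}{\pi}\tfrac{s}{s^2+u^2}\,du$ with an everywhere-positive density, so vanishing of the quadratic form forces $\sum_k\xi_k e^{iux_k}\equiv 0$ on $\R$, and linear independence of exponentials with distinct frequencies (the elementary one-dimensional analogue of Lemma \ref{lezast}) gives $\xi_k=0$. Either patch makes your proof correct; without one of them, the $n=1$ case rests on a statement the paper only obtains from the corollary itself.
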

  \begin{remark}
$(i)$ In ~\cite[Lemma~6.7]{Wend_2005}  assertion  of Lemma ~\ref{lezast}   was proven  provided that  the equality ~\eqref{lindep}  holds on a certain open subset of $\mathbb{R}$.

$(ii)$  Theorem \ref{thzast}  was  formulated in ~\cite[Theorem~6.18]{Wend_2005}  and
~\cite[Theorem~3.7]{Fasshauer_2007}  under the   additional  condition  {\it  $t^{n-1}f(t)\in L^1[0,\infty)$.}

\end{remark}

\begin{example}\label{rempoz}
Let us present  some examples  of  strictly   positive functions.
\\

$(1)$ 
 Using  the equality  $\Gamma(2p)=\tfrac{2^{2p-1}}{\sqrt{\pi}}\Gamma(p)\Gamma(p+1/2)$, we obtain
 \begin{equation*}
   \Omega_1(t)=\cos t,\quad  \Omega_2(t)=J_0(t),\quad  \Omega_3(t)={\sin t}/{t}.
 \end{equation*}
 By Theorem \ref{thzast}, the functions   $\Omega_n(s|x|)$   are  strictly  positive definite on  $\mathbb{R}^n$  for any $s>0$ and  $n\ge 2$.

 $(2)$  It  is easily  seen that the  functions  $e^{-t}$ and  $(1-e^{-t})/t=\int^1_0 e^{-ts}ds$ are  completely monotonic on  $[0,+\infty)$. Thus, by  Corollary \ref{shoencor}, the  functions     $e^{-|x|}$ and  $(1-e^{-|x|})/|x|$  are strictly positive  definite on
    $\mathbb{R}^n$ for all $n\in\mathbb{N}$.
\end{example}

\section{Three-dimensional Schr\"{o}dinger  operator with  point  interactions}


 \subsection{Boundary  triplet  and Weyl  function}
   First   we  define a boundary  triplet  for the operator $H^*$. Denote  $r_j:=\vert
x-x_j\vert,\,x\in\bR^3$, let  also
$\sqrt{\cdot}$ stands for  \, the branch of the corresponding
multifunction defined on $\C\setminus \R_+$
by the condition  $\sqrt{1}=1.$
\begin{proposition}\label{pr1}
Let  $H$ be the minimal  Schr\"{o}dinger  operator defined by   \eqref{min}
and let
$\xi_0:=\{\xi_{0j}\}_{j=1}^m,\,\,\xi_1:=\{\xi_{1j}\}_{j=1}^m\,\,\in\mathbb{C}^m.$
Then the  following  assertions hold

$(i)$    The  operator $H$  is  closed  and symmetric. The deficiency  indices  of    $H$ are $n_\pm(H)=m$.  The  defect subspace
$\mathfrak{N}_z := \mathfrak{N}_z(H)$  is
    \begin{equation}\label{4.18}
\mathfrak{N}_z =\{
\sum\limits_{j=1}^mc_j\frac{e^{i\sqrt{z}r_j}}{4\pi r_j}:\ c_j\in
\C,\ j\in \{1,\ldots,m\} \}, \quad z\in \C\setminus [0,\infty).
         \end{equation}
 $(ii)$  The adjoint  operator   $H^*$  is  given  by
\begin{gather}\label{eq3}
\dom(H^*) =\left\{ f=
\sum\limits^m_{j=1}\bigl(\xi_{0j}\frac{e^{-r_j}}{r_j}+\xi_{1j}e^{-r_j}\bigr) +
f_H:\ \xi_{0},\xi_{1}\in\bC^m,\, f_H\in\dom(H)\right\},\\
H^*f=-\sum\limits^m_{j=1}\bigl(\xi_{0j}\frac{e^{-r_j}}{r_j}+\xi_{1j}(e^{-r_j}-\frac{2e^{-r_j}}{r_j})\bigr)-\Delta f_H.\label{eq3'}
\end{gather}
$(iii)$ A totality    $\Pi =\{\kH,\Gamma_0,\Gamma_1\}$,  where
\begin{gather}
\kH=\bC^m,\quad \Gamma_0 f:=\{\Gamma_{0j}f\}^m_{j=1}=
4\pi\,\{\lim_{x\rightarrow
x_j}f(x)|x-x_j|\}^m_{j=1}=4\pi\{\xi_{0j}\}^m_{j=1},\label{g0}\\
\Gamma_1 f:=\{\Gamma_{1j}f\}^m_{j=1}=\{\lim_{x\rightarrow
x_j}\bigl(f(x)-\tfrac{\xi_{0j}}{|x-x_j|}\bigr)\}^m_{j=1},\label{g1}
\end{gather}
forms a  boundary  triplet for $H^*.$

$(iv)$ The  operator $H_0=H^*\upharpoonright \ker(\Gamma_0) (=H_0^*)$ coincides with the free Hamiltonian,
\begin{equation*}\label{freeh}
H_0 = -\Delta, \qquad \dom(H_0) = \dom(-\Delta) = W^{2,2}(\mathbb{R}^3).
\end{equation*}
\end{proposition}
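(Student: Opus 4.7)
The plan is to prove the four parts in order; the strict positive definiteness of $e^{-|x|}$ on $\R^3$ (Example~\ref{rempoz}) will enter twice, and its role is to ensure invertibility of the Gram matrix $M=\bigl(e^{-|x_j-x_k|}\bigr)_{j,k=1}^m$.

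For part $(i)$, closedness of $H$ follows from the Sobolev embedding $W^{2,2}(\R^3)\hookrightarrow C(\R^3)$, which makes each point evaluation $f\mapsto f(x_j)$ continuous on $W^{2,2}(\R^3)$; symmetry is a standard double integration by parts. To identify $\mathfrak{N}_z$, I would verify directly that $\frac{e^{i\sqrt{z}\,r_j}}{4\pi r_j}$ belongs to $L^2(\R^3)$ whenever $\imm\sqrt{z}>0$, is the fundamental solution of $-\Delta-z$ at $x_j$, and is orthogonal to $\ran(H-\bar z)$ (the boundary terms in Green's formula vanish because $f(x_j)=0$ for $f\in\dom(H)$); this produces $m$ linearly independent elements of $\mathfrak{N}_z$. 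The matching upper bound $n_\pm(H)\le m$ comes from viewing $H$ as a restriction of the self-adjoint operator $-\Delta$ on $W^{2,2}(\R^3)$ by exactly $m$ linear conditions.

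For part $(ii)$, note that $\frac{e^{-r_j}}{r_j}\in\mathfrak{N}_{-1}\subset\dom(H^*)$ with $H^*\bigl(\frac{e^{-r_j}}{r_j}\bigr)=-\frac{e^{-r_j}}{r_j}$, while $e^{-r_j}\in W^{2,2}(\R^3)\subset\dom(H^*)$ and a direct computation gives $-\Delta e^{-r_j}=\frac{2e^{-r_j}}{r_j}-e^{-r_j}$; combined with the von Neumann dimension count these assemble \eqref{eq3}--\eqref{eq3'}. To show linear independence of the $2m$ displayed functions modulo $\dom(H)$, assume a combination lies in $\dom(H)\subset C(\R^3)$; continuity at $x_j$ forces $\xi_{0j}=0$, after which the vanishing $f_H(x_j)=0$ reads $M\xi_1=0$, and invertibility of $M$ (from strict positive definiteness of $e^{-|x|}$) gives $\xi_1=0$. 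Since $\dim\dom(H^*)/\dom(H)=2m$, the span in \eqref{eq3} exhausts $\dom(H^*)$.

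Part $(iii)$: the Green identity is verified pairwise on the basis from (ii). Regular--regular pairs integrate by parts in the usual way; singular--singular pairs vanish because $H^*$ acts as multiplication by $-1$ on both sides and the kernel $\frac{e^{-|x_j-x_k|}}{|x_j-x_k|}$ is symmetric; the only nonzero contributions come from singular--regular pairs $\bigl(\frac{e^{-r_j}}{r_j},g\bigr)$, which I would evaluate by excising $|x-x_j|<\varepsilon$, applying Green's second identity in the exterior, and letting $\varepsilon\downarrow 0$. The surface term on the small sphere limits to $-4\pi\overline{g(x_j)}$, exactly the required cross term from \eqref{GI}; this singularity analysis is the main technical step. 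For surjectivity, given $(a,b)\in\C^m\oplus\C^m$, one sets $\xi_0=a/(4\pi)$, in which case $\Gamma_1 f=b$ reduces to a linear system $M\xi_1=c(a,b)$, solvable by invertibility of $M$. Finally, part $(iv)$ is immediate once (iii) is in place: $\ker\Gamma_0$ corresponds to $\xi_0=0$, so $f=\sum_j\xi_{1j}e^{-r_j}+f_H\in W^{2,2}(\R^3)$, and the converse $W^{2,2}(\R^3)\subset\ker\Gamma_0$ follows by solving $M\xi_1=(g(x_j))_{j=1}^m$ and taking $f_H=g-\sum_j\xi_{1j}e^{-r_j}\in\dom(H)$; the identification $H_0=-\Delta$ then drops out of \eqref{eq3'} restricted to $\xi_0=0$.
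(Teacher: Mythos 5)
Your proposal is correct and takes essentially the same route as the paper: the same building blocks $e^{-r_j}/r_j\in\mathfrak{N}_{-1}$ and $e^{-r_j}\in W^{2,2}(\R^3)$ with a von Neumann dimension count for the description of $\dom(H^*)$, the Green identity reduced to the singular--regular cross pairs and evaluated by excising small balls around the $x_j$ (yielding the $-4\pi e^{-|x_j-x_k|}$ terms), and surjectivity of $\Gamma$ together with $H_0=-\Delta$ obtained from the invertibility of $\bigl(e^{-|x_j-x_k|}\bigr)_{j,k=1}^m$, which rests on the strict positive definiteness of $e^{-|\cdot|}$. The only notable difference is that you sketch a proof of assertion $(i)$, which the paper simply cites from \cite{AGHH88}.
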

          \begin{proof}
$(i)$ The  statement $(i)$ of   Proposition \ref{pr1}  is
well-known.  It  was   obtained
 in the classical  book \cite{AGHH88}(see Theorem 1.1.2).

 $(ii)$   Clearly,  $e^{-r_j}\in W^{2,2}(\mathbb{R}^3)\subset\dom(H^*)$   for  $j\in\{1,..,m\}$.

  Since  the    function  $e^{-|x|}$ is   strictly   positive  definite  on  $\mathbb{R}^3$ (Example \ref{rempoz}),
  the matrix  $(e^{-|x_k-x_j|})_{k,j=1}^m$  is positive    definite.
  Therefore    the  functions  $e^{-r_j},\,\,\, j\in\{1,..,m\}$  are  linearly independent.
Consider the  operator  $\widetilde{H}$ defined   by
     \begin{equation}\label{4.11}
\widetilde{H}:=H^*\upharpoonright\dom(\widetilde{H}),\quad
\dom(\widetilde{H})=\dom(H)\dot{+}\Span\{e^{-r_j}\}_{j=1}^m =
W^{2,2}(\mathbb{R}^3).
     \end{equation}
Since $\dom(\widetilde{H}) = \dom(-\Delta) = W^{2,2}(\mathbb{R}^3)$
and both operators $\widetilde{H}$ and $-\Delta$ are proper
extensions of $H$, we have $\widetilde{H} = -\Delta$ and the
operator $\widetilde{H}$ is self-adjoint.

Further, since the functions $\frac{e^{-r_j}}{4\pi r_j}\in
\mathfrak{N}_{-1},\,\,j\in\{1,..,m\}$ are linearly independent,
and, by the second J. von Neumann formula,
\[
\dim(\dom (H^*)/\dom(\widetilde{H}))=n_\pm(H)=m,
\]
representation  \eqref{eq3} is proved.

$(iii)$  Let  $f,g\in\dom(H^*)$. By  \eqref{eq3}, we  have
 \begin{equation*}\label{fg}
  f= \sum\limits^m_{k=1}f_k + f_H, \,\,\, f_k =\xi_{0k}\,\frac{e^{-r_k}}{
r_k}+\xi_{1k}e^{-r_k}, \,\,\,\,
  g = \sum\limits^m_{k=1}g_k + g_H, \,\,\, g_k =
\eta_{0k}\,\frac{e^{-r_k}}{r_k}+\eta_{1k}\,e^{-r_k},
 \end{equation*}
 where $f_H,
g_H\in\dom(H)$, and  $\xi_{0k},\ \xi_{1k}, \eta_{0k},\
\eta_{1k}\in\bC,\ k\in\{1,..,m\}$.

 Applying  \eqref{g0}-\eqref{g1}  to $f,g\in\dom(H^*)$, we obtain
\begin{multline}\label{tripl}
\Gamma_0f=4\pi\{\xi_{0j}\}_{j=1}^m,\quad\Gamma_1f=\left\{-\xi_{0j}+\sum\limits_{k\neq
j}\xi_{0k}\frac{e^{-|x_j-x_k|}}{|x_j-x_k|}+\sum\limits_{k=1}^m\xi_{1k}e^{-|x_j-x_k|}\right\}_{j=1}^m,\\
\Gamma_0g=4\pi\{\eta_{0j}\}_{j=1}^m,\quad\Gamma_1g=\left\{-\eta_{0j}+\sum\limits_{k\neq
j}\eta_{0k}\frac{e^{-|x_j-x_k|}}{|x_j-x_k|}+\sum\limits_{k=1}^m\eta_{1k}e^{-|x_j-x_k|}\right\}_{j=1}^m.\quad\quad\qquad\quad
\end{multline}
It is easily seen that
\begin{multline*}
(H^*f,g) -(f,H^*g) =
\sum\limits^m_{k,j=1}\biggl(\left(\xi_{0j}H^*\left(\frac{e^{-r_j}}{
r_j}\right),\eta_{1k}\,e^{-r_k}\right) - \left(\xi_{0j}\,\frac{e^{-r_j}}{
r_j},\eta_{1k}H^*(e^{-r_k})\right)\\+\left(\xi_{1j}H^*(e^{-r_j}),\eta_{0k}\,\frac{e^{-r_k}}{r_k}\right)
-\left(\xi_{1j}\,e^{-r_j},\eta_{0k}H^*\left(\frac{e^{-r_k}}{r_k}\right)\right)\biggr).
\end{multline*}
 Using the  second Green  formula,  we get
\begin{multline}\label{green'}
\left(H^*\left(\frac{e^{-r_j}}{r_j}\right),e^{-r_k}\right)-
\left(\frac{e^{-r_j}}{r_j},H^*(e^{-r_k})\right)\\=\lim\limits_{r\rightarrow\infty}\biggl(\int\limits_{B_r(x_j)\backslash
B_{\tfrac1{r}}(x_j)}-\Delta\left(\frac{e^{-r_j}}{r_j}\right)e^{-r_k}\mathrm{dx}+\int\limits_{B_r(x_j)\backslash
B_{\tfrac1{r}}(x_j)}\frac{e^{-r_j}}{r_j}\Delta(e^{-r_k})\mathrm{dx}\biggr)\\
=\lim\limits_{r\rightarrow\infty}\int\limits_{S_r(x_j)}\biggl(-\frac{\partial}{\partial n}\left(\frac{e^{-r_j}}{r_j}\right)e^{-r_k}+\frac{e^{-r_j}}{r_j}\frac{\partial {(e^{-r_k})}}{\partial
n}\biggr)\mathrm{ds}\\+\lim\limits_{r\rightarrow\infty}
\int\limits_{S_{\tfrac1{r}}(x_j)}\biggl(\frac{\partial}{\partial n}\left(\frac{e^{-r_j}}{r_j}\right)e^{-r_k}-\frac{e^{-r_j}}{r_j}\frac{\partial  {(e^{-r_k})}}{\partial
n}\biggr)\mathrm{ds}= -4\pi e^{-|x_j-x_k|},
\end{multline}
where ${n}$   stands   for the exterior  normal  vector to
$S_r(x_j)$  and $S_{\tfrac1{r}}(x_j)$,  respectively.

 Indeed, noticing that  $\frac{\partial}{\partial n}\left(\frac{e^{-r_j}}{r_j}\right)=-\frac{e^{-r_j}}{r_j}(1+\tfrac1{r_j})$, we can   easily  show  that the first  integral in  the formula \eqref{green'} tends   to  zero  as $r$  tends to  infinity.
Further,

\begin{multline*}
\lim\limits_{r\rightarrow\infty}
\int\limits_{S_{\tfrac1{r}}(x_j)}\frac{\partial}{\partial n}\left(\frac{e^{-r_j}}{r_j}\right)e^{-r_k}\mathrm{ds}
=\lim\limits_{r\rightarrow\infty}
\int\limits_{S_{\tfrac1{r}}(0)}\frac{\partial}{\partial n}\left(\frac{e^{-|y|}}{|y|}\right)e^{-|y+x_j-x_k|}\mathrm{ds}\\=-\lim\limits_{r\rightarrow\infty}\left[\frac{4 \pi}{r^2}e^{-1/r}r(1+r)e^{-|y^*+x_j-x_k|}\right]=-4\pi\lim\limits_{y^{*}\rightarrow
0}e^{-|y^*+x_j-x_k|}=-4\pi e^{-|x_j-x_k|},
\end{multline*}
and
\begin{multline*}
\lim\limits_{r\rightarrow\infty}\int\limits_{S_{\tfrac1{r}}(x_j)}\frac{e^{-r_j}}{r_j}\frac{\partial  {(e^{-r_k})}}{\partial
n}\mathrm{ds}=\lim\limits_{r\rightarrow\infty}\int\limits_{S_{\tfrac1{r}}(0)}\frac{e^{-|y|}}{|y|}\frac{\partial  {(e^{-|y+x_j-x_k|})}}{\partial
n}\mathrm{ds}\\=\lim\limits_{r\rightarrow\infty}\left[\frac{4\pi}{r^2} e^{-1/r}r \frac{\partial  {(e^{-|y+x_j-x_k|})}}{\partial
n}|_{y=y^{**}}\right]=0, \quad  y^*\in S_{\tfrac1{r}}(0),\,\, y^{**}\in S_{\tfrac1{r}}(0).
\end{multline*}


 Finally, by \eqref{tripl},
\begin{multline*}
(H^*f,g)
-(f,H^*g)=4\pi\sum\limits^m_{k,j=1}\biggl(-\xi_{0j}
\overline{\eta}_{1k}e^{-|x_j-x_k|}+
\xi_{1j}\overline{\eta}_{0k}e^{-|x_j-x_k|}\biggr)\\=
\sum\limits_{j=1}^m(\Gamma_{1j}f,\Gamma_{0j}g)-(\Gamma_{0j}f,\Gamma_{1j}g)=(\Gamma_1f,\Gamma_0g)-(\Gamma_0f,\Gamma_1g).
\end{multline*}
 Thus, the Green identity  is satisfied.
 Let us  show  that    that  the mapping
 $\Gamma=(\Gamma_0,\Gamma_1)^\top$ is surjective. Let $(h_0,h_1)^\top\in\cH\oplus\cH$,
 where $h_0=\{h_{0j}\}_{j=1}^m,h_1=\{h_{1j}\}_{j=1}^m$
 are  vectors from $\mathbb{C}^m$.  According  to   \eqref{eq3},   any  $f\in\dom(H^*)$   admits   the   representation   $f=f_H+\sum\limits^m_{j=1}\bigl(\xi_{0j}\frac{e^{-r_j}}{r_j}+\xi_{1j}e^{-r_j}\bigr)$.
  Let us  put
\begin{equation}\label{hat}
E_0:=\left(\frac{e^{-|x_k-x_j|}}{|x_k-x_j|-\delta_{kj}}\right)_{j,k=1}^m,\quad
E_1:=\left(e^{-|x_k-x_j|}\right)_{k,j=1}^m,
\end{equation}
where  $\delta_{kj}$  denotes\,the Kronecker  symbol.
  Invertibility of the matrix $E_1$ has  been already   established  above. Therefore setting  $\xi_0=\tfrac1{4\pi}h_0$ and
$\xi_1=E_1^{-1}(h_1-\tfrac1{4\pi}E_0h_0)$, we get  $\Gamma_0f=h_0$ and
$\Gamma_1f=h_1$. Thus, $\Pi$  is  a boundary  triplet for $H^*$.

$(iv)$  This statement  easily   follows  from \eqref{eq3},
\eqref{g0} and \eqref{4.11}.
\end{proof}

Combining   Proposition  \ref{s-aext} 
with  formulas  \eqref{eq3}, \eqref{tripl}, we  arrive at  the
following    proposition.
   \begin{proposition}
Let   $H$  be the  minimal Schr\"odinger  operator  defined by  \eqref{min},
$\Pi =\{\kH,\Gamma_0,\Gamma_1\}$ a boundary triplet for $H^*$
defined by  \eqref{g1} and  let the matrices $E_0, E_1$ be defined
by \eqref{hat}.
Then the   set of  self-adjoint  extensions $\widetilde H\in \Ext_H$  is  
parameterized   as
     \begin{equation}\label{s-adjconcr}
\widetilde H = H_\Theta=H^*\upharpoonright\{
f=\sum\limits^m_{j=1}\bigl(\xi_{0j}\frac{e^{-r_j}}{r_j} +
\xi_{1j}e^{-r_j} \bigr) +
f_H\in\dom(H^*):\,\{\Gamma_0f,\Gamma_1f\}\in\Theta\},
  \end{equation}
 where $\Theta$  runs  through the  set of all  self-adjoint  linear relations in $\kH$.  Moreover,
  $\Theta_{\infty}$ and
 $\Theta_{\op}$ are defined by
%
%
    \begin{gather}\label{op3}
\Theta_{\infty} = \{0, \kH_\infty\} = \{\{0, E_1\xi''_1\}:\,\xi''_1\perp E_1 \xi_0,\  \xi_0\in \kH_{\op} \}, \\  
\Theta_{\op} = \{\{4\pi \xi_0, E_0\xi_0 + E_1\xi'_1\}:\, \xi_0\in
\kH_{\op},\,\xi'_1 = E_1^{-1}(4\pi B\xi_0 - E_0\xi_0)\}, \label{mul3}
  \end{gather}
where $B=B^*\in [\kH_{\op}]$. In particular, $\widetilde H = H_\Theta$
is disjoint with $H_0$ if and only if  $\dom(\Theta)=\mathbb{C}^m$.
In this case $\Theta=\Theta_{\op}= grB.$
\end{proposition}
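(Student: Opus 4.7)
The plan is to transfer the abstract bijection from Proposition \ref{propo} into the explicit matrix form via the boundary triplet $\Pi$ constructed in Proposition \ref{pr1}. First, by Proposition \ref{propo} the map $\widetilde H \mapsto \Theta := \Gamma(\dom(\widetilde H))$ is a bijection between $\Ext_H$ and closed self-adjoint linear relations in $\kH=\C^m$. Combined with the explicit description \eqref{eq3} of $\dom(H^*)$, this gives \eqref{s-adjconcr} directly: an element $f = f_H + \sum_j(\xi_{0j}e^{-r_j}/r_j + \xi_{1j}e^{-r_j})$ lies in $\dom(H_\Theta)$ iff $\{\Gamma_0 f,\Gamma_1 f\}\in\Theta$.

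Next, I would rewrite \eqref{tripl} in matrix form. Observing that the diagonal convention in $E_0$ from \eqref{hat} automatically absorbs the $-\xi_{0j}$ term (since $e^0/(0-1)=-1$), one gets compactly $\Gamma_0 f = 4\pi\xi_0$ and $\Gamma_1 f = E_0\xi_0 + E_1\xi_1$, where $\xi_0,\xi_1\in\C^m$. Using the canonical decomposition $\kH = \kH_{\op}\oplus\kH_\infty$ with $\kH_\infty=\mul(\Theta) = \kH_{\op}^\perp$ (self-adjointness of $\Theta$), elements of $\Theta_\infty$ are pairs $\{0,h_1\}\in\Theta$. Such pairs correspond to $\xi_0=0$, hence $h_1 = E_1\xi_1''$, subject to $h_1\in\mul(\Theta)$, i.e., $E_1\xi_1''\perp\xi_0$ for every $\xi_0\in\kH_{\op}$; by self-adjointness of the real matrix $E_1$ this is equivalent to $\xi_1''\perp E_1\xi_0$, yielding \eqref{op3}.

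For $\Theta_{\op}$, any $\{h_0,h_1\}\in\Theta$ with $h_0\in\dom(\Theta)$ has a unique operator part $B h_0$ obtained by projecting $h_1$ onto $\kH_{\op}$, where $B=\Theta_{\op}$ is a bounded self-adjoint operator on $\kH_{\op}$. Writing $h_0=4\pi\xi_0$, $h_1 = E_0\xi_0 + E_1\xi_1'$ and equating the $\kH_{\op}$-component to $4\pi B\xi_0$ gives, after solving for $\xi_1'$ using the invertibility of $E_1$ (already established in Proposition \ref{pr1}(iii) via strict positive definiteness of $e^{-|\cdot|}$ on $\R^3$, cf. Example \ref{rempoz}), the formula $\xi_1' = E_1^{-1}(4\pi B\xi_0 - E_0\xi_0)$, which is \eqref{mul3}.

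Finally, $H_\Theta$ is disjoint with $H_0 = H^*\upharpoonright\ker\Gamma_0$ iff $\mul(\Theta)=\{0\}$; in the finite-dimensional space $\kH=\C^m$ this is equivalent to $\dom(\Theta)=\C^m$, and then $\Theta=\Theta_{\op}=\mathrm{gr}(B)$ with $B=B^*\in[\C^m]$, as in Proposition \ref{propo}(iii). No deep obstacle is expected; the main technical point is tracking which combinations of the parameters $(\xi_0,\xi_1)$ belong to the operator versus the multivalued components of $\Theta$, which is handled cleanly once one exploits the invertibility of $E_1$ inherited from our complement to Schoenberg's theorem.
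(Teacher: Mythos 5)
Your proposal is correct and takes essentially the same route as the paper: both pass through the abstract bijection \eqref{s-aext}, rewrite the boundary maps via \eqref{tripl} as $\Gamma_0 f=4\pi\xi_0$, $\Gamma_1 f=E_0\xi_0+E_1\xi_1$ (with the diagonal convention in $E_0$ absorbing the $-\xi_{0j}$ term), and then identify $\Theta_{\op}$ and $\Theta_{\infty}$ through the operator/multivalued decomposition of a self-adjoint relation together with the invertibility of $E_1$. The only cosmetic difference is that the paper explicitly verifies the symmetry condition $(E_1\xi_1,\xi_0)=(\xi_0,E_1\xi_1)$ for relations of the form \eqref{op3}--\eqref{mul3} (and notes the argument reverses), whereas you derive the same formulas directly from the canonical decomposition quoted in the paper's preliminaries.
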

        \begin{proof}
By \eqref{s-aext}, any    self-adjoint extension  $H_\Theta$  is
parameterized  as follows
  \begin{multline}\label{rel}
H_\Theta=H^*\upharpoonright \dom(H_\Theta),\quad  \\   
 \dom(H_\Theta)=\{f=\sum\limits^m_{j=1}\bigl(\xi_{0j}\frac{e^{-r_j}}{r_j}+\xi_{1j}e^{-r_j}
\bigr)+f_H:\,\{4\pi\xi_0,E_0\xi_0+E_1\xi_1\}\in\Theta=\Theta^*\}.
   \end{multline}
It is easily seen   that   self-adjointness    of the  relation
$\Theta$ is  equivalent to  the   condition
$(E_1\xi_1,\xi_0)={(\xi_0, E_1\xi_1)}$. Since $\kH_\infty =
\mul(\Theta)\perp\dom(\Theta)= \kH_{\op}$, this  condition is
equivalent to \eqref{op3}-\eqref{mul3}. For instance, it follows
from \eqref{op3} and \eqref{mul3} that $(E_1\xi_1'',\xi_0) =0
=(\xi_0, E_1\xi_1'')$ and
   \begin{equation*}
(E_1\xi'_1,\xi_0)=(4\pi B\xi_0 - E_0\xi_0,\xi_0)=(\xi_0,4\pi
B\xi_0 - E_0\xi_0)=(\xi_0,E_1\xi'_1).
   \end{equation*}
Hence $(E_1\xi_1,\xi_0)=(\xi_0,E_1\xi_1)$. The arguments can be
reversed.
      \end{proof}
      \begin{remark}
Note that    the $m$-parametric family $H^{(3)}_{X,\alpha}$
investigated  in \cite[Theorem 1.1.1]{AGHH88} is parameterized by
the set of  diagonal matrices $B_\alpha :=
\diag(\alpha_1,..,\alpha_m) = B_\alpha^*$.   
Namely, in   this  case, we  put in  \eqref{rel}
$\Theta=\Theta_{\op}=B_\alpha$ and
%
%
\begin{equation}\label{fam}
H^{(3)}_{X,\alpha}=H^*\upharpoonright\{
f=\sum\limits^m_{j=1}\xi_{0j}\frac{e^{-r_j}}{r_j}+\sum\limits^m_{k,j=1}
b_{jk}(\alpha)\xi_{0k}e^{-r_j}+f_H\},\quad
\end{equation}
where $\widetilde{B}_\alpha
=(b_{jk}(\alpha))_{j,k=1}^m=E_1^{-1}(4\pi B_\alpha-E_0)$.
  \end{remark}
     \begin{proposition}\label{pr2}
Let   $H$  be the minimal  Schr\"{o}dinger  operator defined by  \eqref{min}
and let  $\Pi=\{\kH,\Gamma_0,\Gamma_1\}$ be  the  boundary triplet
for $H^*$ defined by \eqref{g0}-\eqref{g1}.  Then

$(i)$  the corresponding Weyl
function $M(\cdot)$  is
    \begin{equation}\label{W3}
 M(z)=\left(\tfrac{i\sqrt{z}}{4\pi}\delta_{jk}+\widetilde{G}_{\sqrt{z}}(x_j-x_{k})\right)_{j,k=1}^m, \qquad
 z\in\mathbb{C}_+,
   \end{equation}
    where
$\widetilde{G}_{\sqrt{z}}(x)=\left\{%
  \begin{array}{ll}
    \frac{e^{i\sqrt{z}|x|}}{4\pi|x|}, & \hbox{x$\neq$0;} \\
    0, & \hbox{x=0.} \\
\end{array}%
\right.$ and $\delta_{kj}$ denotes the  Kronecker symbol;

$(ii)$  the corresponding $\gamma(\cdot)$-field is given by
\[ \gamma(z)\{a_j\}^m_{j=1} =
\sum\limits^m_{j=1}a_j\,\frac{e^{i\sqrt{z}r_j}}{4\pi r_j}.\]
 \end{proposition}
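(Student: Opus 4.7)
The plan is to directly compute $\Gamma_0 f_z$ and $\Gamma_1 f_z$ for an arbitrary defect element $f_z\in\mathfrak{N}_z$, using the explicit parametrization \eqref{4.18}, and then read off $\gamma(z)$ and $M(z)$ from the defining relations $\gamma(z)\Gamma_0 f_z=f_z$ and $M(z)\Gamma_0 f_z=\Gamma_1 f_z$.

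First I would fix $z\in\C\setminus[0,\infty)$ and take $f_z=\sum_{j=1}^m c_j\,\frac{e^{i\sqrt{z}r_j}}{4\pi r_j}$. Near $x_j$ only the $j$-th summand is singular, and its leading singularity is $\frac{c_j/(4\pi)}{r_j}$. Multiplying by $|x-x_j|=r_j$ and passing to the limit yields $\lim_{x\to x_j}f_z(x)r_j=c_j/(4\pi)$, so that by \eqref{g0}
\[
\Gamma_{0j}f_z=4\pi\cdot\frac{c_j}{4\pi}=c_j,\qquad j\in\{1,\ldots,m\}.
\]
Hence $\Gamma_0 f_z=\{c_j\}_{j=1}^m$ and assertion $(ii)$ follows immediately from $\gamma(z)\Gamma_0 f_z=f_z$.

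Next, with $\xi_{0j}=c_j/(4\pi)$, definition \eqref{g1} gives
\[
\Gamma_{1j}f_z=\lim_{x\to x_j}\Bigl[\frac{c_j}{4\pi r_j}\bigl(e^{i\sqrt{z}r_j}-1\bigr)+\sum_{k\neq j}\frac{c_k\,e^{i\sqrt{z}r_k}}{4\pi r_k}\Bigr].
\]
The terms with $k\neq j$ are continuous at $x_j$ and contribute $\sum_{k\neq j}\frac{c_k\,e^{i\sqrt{z}|x_j-x_k|}}{4\pi|x_j-x_k|}$. For the first term I would use the elementary expansion $e^{i\sqrt{z}r_j}-1=i\sqrt{z}\,r_j+O(r_j^2)$ as $r_j\to 0$, which yields $\lim_{x\to x_j}\frac{c_j}{4\pi r_j}(e^{i\sqrt{z}r_j}-1)=\frac{i\sqrt{z}\,c_j}{4\pi}$. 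Collecting these contributions,
\[
\Gamma_{1j}f_z=\frac{i\sqrt{z}}{4\pi}c_j+\sum_{k\neq j}\widetilde{G}_{\sqrt{z}}(x_j-x_k)\,c_k=\sum_{k=1}^m\Bigl(\tfrac{i\sqrt{z}}{4\pi}\delta_{jk}+\widetilde{G}_{\sqrt{z}}(x_j-x_k)\Bigr)c_k.
\]
Since $\Gamma_0 f_z=\{c_j\}$, comparing with $M(z)\Gamma_0 f_z=\Gamma_1 f_z$ gives \eqref{W3}, first for $z\in\C\setminus[0,\infty)$ and in particular for $z\in\C_+$.

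No serious obstacle is expected: the argument is a direct local expansion around each $x_j$ combined with the definitions of $\Gamma_0,\Gamma_1$ and the Weyl function. The only delicate point is verifying that the ``off-diagonal'' terms $\frac{c_k\,e^{i\sqrt{z}r_k}}{4\pi r_k}$ with $k\neq j$ remain genuinely continuous at $x_j$ (so they can be evaluated pointwise), and that the $O(r_j^2)/r_j$ remainder from the expansion of $e^{i\sqrt{z}r_j}-1$ vanishes in the limit; both are immediate since $x_k\neq x_j$ for $k\neq j$. The well-definedness of $M(z)$ (independence of the representative) is automatic from Definition \ref{Weylfunc} together with the linear independence of $\{\tfrac{e^{i\sqrt{z}r_j}}{4\pi r_j}\}_{j=1}^m$ in $\mathfrak{N}_z$ established in Proposition \ref{pr1}$(i)$.
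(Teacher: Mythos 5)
Your proposal is correct and coincides with the paper's own argument: the paper likewise takes $f_z=\sum_j a_j\tfrac{e^{i\sqrt{z}r_j}}{4\pi r_j}\in\mathfrak{N}_z$, applies $\Gamma_0,\Gamma_1$ from \eqref{g0}--\eqref{g1} to obtain exactly your formulas (its equation \eqref{eq*}), and then reads off \eqref{W3} and the $\gamma$-field from Definition \ref{Weylfunc}. Your added details (the expansion $e^{i\sqrt{z}r_j}-1=i\sqrt{z}\,r_j+O(r_j^2)$ and the continuity of the off-diagonal terms) merely make explicit the limits the paper computes implicitly.
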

    \begin{proof}
 Let  $f_z\in \mathfrak{N}_z.$
 By  \eqref{4.18},  $f_z = \sum\limits^m_{j=1}a_j\,\frac{e^{i\sqrt{z}r_j}}{4\pi r_j},\
a_j\in\bC$.
 Applying   $\Gamma_0$  and  $\Gamma_1$ to  $f_z$, we  get
 \begin{equation}\label{eq*}
\Gamma_0f_z = \{a_j\}^m_{j=1},\qquad
 \Gamma_1f_z=\left\{a_j\frac{i\sqrt{z}}{4\pi}+\sum\limits_{k\neq
j}a_k\frac{e^{i\sqrt{z}|x_j-x_k|}}{4\pi|x_j-x_k|}\right\}^m_{j=1},
\quad z\in \C\setminus [0,\infty).
    \end{equation}
Substituting these formulas in \eqref{2.3A} (cf. Definition
\ref{Weylfunc}), we arrive at  \eqref{W3}.

The second statement  follows  immediately   from   \eqref{eq*} and
\eqref{2.3A}.
   \end{proof}
\begin{remark}
$(i)$    In  the  case  $m=1$, description    \eqref{4.18}  of $\mathfrak{N}_z$ was   obtained by
Lyantse and Majorga   \cite [Lemma 4.1]{LyaMaj}.

 $(ii)$  The  first   construction  of the   boundary    triplet, in the  case  $m=1$,
goes  apparently back  to  the  paper   by  Lyantse  and Majorga
   \cite[Theorem 2.1]{LyaMaj}.  Slightly different  boundary triplet was obtained in  \cite[Section 5.4]{bmn}.
    Another construction  of the    boundary   triplet  in  the  situation   of general  elliptic  operator  with  boundary   conditions  on   a set  of zero  Lebesgue measure   was   obtained     by  A. Kochubei  \cite{Koc82}.  However   this  construction is  not  suitable  for our purpose.

 $(iii)$  The  Weyl  function in the form \eqref{W3}  has  appeared
 in Krein's formula  for   resolvent of  $H_{X,\alpha}^{(3)}$ (see  \cite[chapter II.1]{AGHH88}). 
     In this connection  we also mention   the  paper  by  Posilicano \cite[Example  5.3]{Pos08}.
Note also that,  in the  case  $m=1$, the  Weyl  function   was
computed   in \cite[Section 5.4]{bmn}  and   without boundary  triplets  in
\cite[Section 10.3]{Ash10}.
    \end{remark}


\subsection{ Spectrum of the  self-adjoint  extensions of the  minimal Schr\"{o}dinger  operator $H$}
In what follows  we need the following lemma.

\begin{lemma}\label{zero}
Let  $H^*$  be  defined  by \eqref{eq3}-\eqref{eq3'}. Then
$0\notin\sigma_p(H^*).$
\end{lemma}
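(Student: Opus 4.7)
The plan is to translate the eigenvalue equation $H^{*}f=0$ into a distributional PDE on $\mathbb{R}^{3}$, apply the Fourier transform, and then exploit the strict positive definiteness established in Example~\ref{rempoz}.

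First I would use Proposition~\ref{pr1}: any $f\in\dom(H^{*})$ admits a unique decomposition $f=\sum_{j=1}^{m}(\xi_{0j}\,e^{-r_{j}}/r_{j}+\xi_{1j}\,e^{-r_{j}})+f_{H}$ with $f_{H}\in\dom(H)$, and the pointwise formula \eqref{eq3'} for $H^{*}f$ coincides with $-\Delta f$ as a classical function away from $X$. Combining this with the distributional identity $(-\Delta+1)(e^{-r_{j}}/r_{j})=4\pi\delta_{x_{j}}$, the equation $H^{*}f=0$ is equivalent to the tempered-distributional PDE
\[
-\Delta f=4\pi\sum_{j=1}^{m}\xi_{0j}\,\delta_{x_{j}}\quad\text{in }\mathcal{D}'(\mathbb{R}^{3}).
\]

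Second, I would take Fourier transforms. Since $\hat f\in L^{2}(\mathbb{R}^{3})$ carries no distribution supported at $\xi=0$, the relation $|\xi|^{2}\hat f(\xi)=4\pi\sum_{j}\xi_{0j}e^{-i\xi\cdot x_{j}}$ forces $\hat f(\xi)=(4\pi/|\xi|^{2})\sum_{j}\xi_{0j}e^{-i\xi\cdot x_{j}}$ as a locally integrable function. Because $|\xi|^{-2}$ is not square-integrable near the origin in $\mathbb{R}^{3}$, Taylor-expanding the numerator and demanding $L^{2}$-integrability produces the scalar constraint $\sum_{j=1}^{m}\xi_{0j}=0$; inverting the Fourier transform then yields the explicit form $f(x)=\sum_{j=1}^{m}\xi_{0j}/|x-x_{j}|$.

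Finally, to pass from $\sum\xi_{0j}=0$ to $\xi_{0}=0$ I would exploit the constraint $f_{H}\in\dom(H)$, i.e.\ $f_{H}(x_{k})=0$ for every $k$. Evaluating the decomposition at each $x_{k}$ gives the linear system $F\xi_{0}=E_{1}\xi_{1}$, where $E_{1}=(e^{-|x_{k}-x_{j}|})_{k,j=1}^{m}$ is the matrix from \eqref{hat} and $F_{kj}=(1-e^{-|x_{k}-x_{j}|})/|x_{k}-x_{j}|$ (with the convention $F_{kk}=1$ obtained as the limit $\lim_{r\to 0}(1-e^{-r})/r$). By Example~\ref{rempoz} the functions $e^{-|\cdot|}$ and $(1-e^{-|\cdot|})/|\cdot|$ are strictly positive definite on $\mathbb{R}^{3}$, so both $E_{1}$ and $F$ are positive-definite matrices; combining this invertibility with the scalar constraint $\sum\xi_{0j}=0$ and the explicit form of $f$ should then give $\xi_{0}=0$, hence $f=0$.

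I expect the main obstacle to be precisely this last algebraic step. The Fourier reduction delivers only a single scalar relation, whose null space is $(m-1)$-dimensional, so the delicate work is to use the strictly positive-definite matrices $E_{1}$ and $F$---together with the compatibility between the boundary-triplet representation of $f$ and its potential-theoretic representation as a sum of Coulomb monopoles---to eliminate this residual freedom completely.
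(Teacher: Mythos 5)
Your reduction is sound up to and including the Fourier step: $H^*f=0$ is indeed equivalent to $-\Delta f=4\pi\sum_{j}\xi_{0j}\delta_{x_j}$ in $\mathcal{D}'(\mathbb{R}^3)$, and square integrability of $\widehat f$ forces $\sum_{j}\xi_{0j}=0$ together with $f=\sum_{j}\xi_{0j}/r_j$. But the last step, which you yourself flag as the delicate one, is not merely delicate: it cannot be carried out. Evaluating the decomposition \eqref{eq3} at the points $x_k$ gives $F\xi_0=E_1\xi_1$, which is $m$ equations in the $2m$ unknowns $(\xi_0,\xi_1)$; since $E_1$ is invertible this only fixes $\xi_1=E_1^{-1}F\xi_0$ and imposes no constraint on $\xi_0$ at all. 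Nor can any further constraint exist, because for every $\xi_0$ with $\sum_j\xi_{0j}=0$ (and $m\ge2$) the function $f=\sum_j\xi_{0j}/r_j$ genuinely lies in $\ker(H^*)$: the $1/|x|$ tails cancel, so $f=O(|x|^{-2})$ at infinity and $f\in L^2(\mathbb{R}^3)$; the regular part $\sum_j\xi_{0j}(1-e^{-r_j})/r_j$ then belongs to $W^{2,2}(\mathbb{R}^3)$ and, using the invertibility of $E_1$, can be split as $\sum_j\xi_{1j}e^{-r_j}+f_H$ with $f_H(x_k)=0$, so $f$ has the form \eqref{eq3}; finally, approximating $u\in\dom(H)$ by test functions in $W^{2,2}(\mathbb{R}^3)$ and using $-\Delta f=4\pi\sum_j\xi_{0j}\delta_{x_j}$ gives $(f,Hu)=4\pi\sum_j\xi_{0j}\overline{u(x_j)}=0$, i.e. $H^*f=0$. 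The simplest instance is $f=1/r_1-1/r_2$. Thus your Fourier computation actually identifies $\ker(H^*)=\{\sum_j\xi_{0j}/r_j:\ \sum_j\xi_{0j}=0\}$, which is $(m-1)$-dimensional; the statement holds only for $m=1$ (where $\sum_j\xi_{0j}=0$ does mean $\xi_{01}=0$), so for $m\ge2$ the gap you anticipated is not one you could have closed.

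For comparison, the paper proceeds without the Fourier transform: it writes $f_0=\sum_j\xi_{0j}/r_j+g$ with $g$ bounded and in $W^{2,2}_{\loc}$, invokes Weyl's lemma, the removable-singularity theorem and Liouville's theorem to conclude $g\equiv0$, and then deduces $\xi_0=0$ from the observation that each $1/r_j\notin L^2(\mathbb{R}^3)$ and that the $1/r_j$ are linearly independent. That final inference is precisely the point your computation exposes: a nontrivial combination of the $1/r_j$ can be square integrable once the leading tails cancel, so the paper's argument breaks at the same place as yours, only phrased in harmonic-function rather than Fourier language. In short, your route is a legitimate (and in fact more transparent) alternative to the paper's potential-theoretic one, but neither can establish the lemma as stated for $m\ge2$; note also that such kernel elements can satisfy self-adjoint boundary conditions (for $m=2$, take $\Theta=B=-(4\pi|x_1-x_2|)^{-1}I$), which is why the issue matters for the way Lemma \ref{zero} is used in Theorem \ref{spec3}.
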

     \begin{proof}
 Let $f_0\in\dom(H^*)$  and $H^*f_0 =0$. Observing   that $\frac{e^{-r_j}-1}{r_j}\in W^{2,2}_{\loc}(\mathbb{R}^3)$
 for any $j\in\{1,..,m\}$, we obtain from   \eqref{eq3} that  $f_0$ admits
a representation
\begin{equation}\label{harm}
f_0 =\sum\limits_{j=1}^m \xi_{0j}\frac{1}{r_j} + g, \quad\text{with }\quad g:=\sum\limits_{j=1}^m\bigl(\xi_{0j}\frac{e^{-r_j}-1}{r_j}+\xi_{1j}e^{-r_j}\bigr)+f_H\in  W^{2,2}_{\loc}(\mathbb{R}^3).
\end{equation}
  By  definition of  $H^*$,  we  get $(f_0,\Delta\varphi)=0,\,\,\varphi\in C_0^\infty(\mathbb{R}^3\setminus  X)$,   i.e.,  $f_0$   is a week  solution of the   equation $\Delta f_0=0.$ By  regularity   theorem (Weyl's lemma \cite[chapter 8]{Gil88}), $f_0(\cdot)$  is  harmonic  function in $\mathbb{R}^3\setminus X$. Since   $(\Delta\frac 1{r_j})(x)=0$    and  $(H^*f_0)(x)=-(\Delta f_0)(x)=0$     for  $x\notin X$,  we  get that $g(x)$
is  harmonic function for    $x\notin X$  and continuous on
$\mathbb{R}^3$  by Sobolev embedding  theorem.

It  follows from  definition \eqref{harm} that  $g(\cdot)$  is  bounded.
Therefore, by     desingularization theorem (see \cite[chapter
IV,\S 3]{mikh76}),  it  can be extended by  continuity to  $X$,
and the extended  function is  harmonic on $\mathbb{R}^3$.
Therefore, by the  Liouville theorem for  harmonic  functions,
$g(x)=const,\,\, x\in \mathbb{R}^3.$ Since   $g(\cdot)\in
L^2(\mathbb{R}^3)$,  we  have  $g(\cdot)\equiv 0$  in
$\mathbb{R}^3$. Observing that    $1/ r_j\notin
L^2(\mathbb{R}^3),\,\, j\in\{1,..,m\}$  and that the functions
$1/r_j$  are linearly independent,   we have   $f_0(\cdot)\equiv
0$ in  $\mathbb{R}^3$.
\end{proof}
In  the  following  theorem  we describe   spectrum of the
self-adjoint extensions of $H$.
\begin{theorem}\label{spec3}
Let  $H$ be  the minimal  Schr\"{o}dinger  operator defined by \eqref{min} and  $\Pi$ be  the boundary  triplet for  $H^*$ defined by
\eqref{g0}-\eqref{g1}. Let also   $M(\cdot)$ be    the  corresponding
Weyl function defined by  \eqref{W3}. Assume that
$H_\Theta=H_\Theta^*\in Ext_H$  defined  by \eqref{s-adjconcr}. Then the   following   assertions
hold.

$(i)$ The extension $H_\Theta$ has  purely absolutely continuous
non-negative spectrum   of infinite  multiplicity.

 $(ii)$ Point  spectrum of the  extension  $H_\Theta$  consists  of at  most   $m$
 negative  eigenvalues (counting multiplicities), $\kappa_-(H_\Theta)\le m.$ Moreover,
$z\in\sigma_p(H_\Theta)\cap\bR_-\,$ if and only if\,
$0\in\sigma_p(\Theta-M(z))$, i.e.,
     \begin{equation}\label{4.27}
z\in\sigma_p(H_\Theta)\cap\bR_-
\Longleftrightarrow\,0\in\sigma_p\bigl(\Theta-\left(\tfrac{i\sqrt{z}}{4\pi}\delta_{jk}
+\widetilde{G}_{\sqrt{z}}(x_j-x_{k})\right)_{j,k=1}^m\bigr).
\end{equation}
The corresponding   eigenfunctions $\psi_z$ have the form
\begin{equation}\label{eigen}
\psi_z=\sum\limits_{j=1}^m a_j\frac{e^{i\sqrt{z}r_j}}{4\pi
r_j},\quad \text{where} \quad  (a_1,.., a_m)^\top\in \ker(\Theta-M(z)).  
      \end{equation}
%
%
  \newline
 $(iii)$ The number of  negative eigenvalues of
 the  self-adjoint  extension $H_\Theta$ equals  the number  of
 negative eigenvalues of  the  relation
$\Theta-M(0)$, $\kappa_{-}(H_\Theta)= \kappa_{-}(\Theta-M(0))$,
i.e.,
\begin{equation*}
\kappa_{-}(H_\Theta)=\kappa_{-}\left(\Theta-
\left(\frac{1-\delta_{jk}}{4\pi|x_k-x_j|+\delta_{jk}}\right)_{j,k=1}^m\right).
\end{equation*}
\end{theorem}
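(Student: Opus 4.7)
The plan is to treat the three parts in order, using as the main tool the boundary values $M(x+i0)$ of the Weyl function \eqref{W3} on the real axis, whose imaginary part on $\R_+$ is controlled by the strict positive definiteness of $\Omega_3(s|\cdot|)=\sin(s|\cdot|)/(s|\cdot|)$ on $\R^3$ established in Example \ref{rempoz} via Theorem \ref{thzast}.

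For (i), I would first compute, using $\sqrt{x+i0}=\sqrt{x}>0$ for $x>0$, that
\[
\imm M(x+i0)=\frac{\sqrt{x}}{4\pi}\left(\delta_{jk}+\frac{\sin(\sqrt{x}|x_j-x_k|)}{\sqrt{x}|x_j-x_k|+\delta_{jk}}\right)_{j,k=1}^m=\frac{\sqrt{x}}{4\pi}\bigl(\Omega_3(\sqrt{x}|x_j-x_k|)\bigr)_{j,k=1}^m.
\]
Since the points $x_j$ are distinct, our complement to Schoenberg's theorem shows this matrix is strictly positive definite for every $x>0$, hence $d_M(x)=m$ on all of $\R_+$. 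Because $H_0=-\Delta$ has purely absolutely continuous spectrum $[0,\infty)$ of infinite multiplicity, Proposition \ref{ac}(iii), applied after transforming via Remark $(iii)$ to a boundary triplet for which $H_\Theta$ is itself the kernel-of-$\Gamma_0$ extension, gives unitary equivalence of the absolutely continuous parts of $H_0$ and $H_\Theta$ on $(0,\infty)$, yielding $\sigma_{ac}(H_\Theta)\supseteq[0,\infty)$ with infinite multiplicity. Absence of singular continuous spectrum on $(0,\infty)$ follows from Proposition \ref{ac}(i) in the transformed triplet, as the corresponding Weyl function $M_B=(B-M)^{-1}$ also has positive definite imaginary part on $\R_+$. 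Finally, no eigenvalue can lie in $[0,\infty)$: the point $0$ is excluded by Lemma \ref{zero}, while for $x>0$ any $v\in\ker(\Theta-M(x+i0))$ satisfies $\{v,M(x+i0)v\}\in\Theta$, forcing $(M(x+i0)v,v)\in\R$, which contradicts $(\imm M(x+i0)v,v)>0$ unless $v=0$.

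For (ii), combining Proposition \ref{prop_II.1.4_spectrum}(i),(iii) with part (i) yields the equivalence \eqref{4.27} on $\rho(H_0)\cap\R_-$ together with the form \eqref{eigen} of the eigenfunctions, read off from the description \eqref{4.18} of $\mathfrak{N}_z$; the bound $\kappa_-(H_\Theta)\le m$ then follows from (iii) since $\Theta-M(0)$ is a self-adjoint relation in $\C^m$. For (iii), I would identify $M(0):=s-R-\lim_{x\uparrow 0}M(x)$ entrywise: for $x<0$ we have $\sqrt{x}=i\sqrt{|x|}$, so $i\sqrt{x}/(4\pi)\to 0$ and $e^{i\sqrt{x}|x_j-x_k|}/(4\pi|x_j-x_k|)\to 1/(4\pi|x_j-x_k|)$ as $x\uparrow 0$, producing exactly the matrix in the statement. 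Since $-\Delta$ on $W^{2,2}(\R^3)$ is the Friedrichs extension of $H$ (its form domain being $W^{1,2}(\R^3)$, in which the point-value conditions defining $\dom(H)$ are invisible in three dimensions), Proposition \ref{prkf}(ii) applies and gives $\kappa_-(H_\Theta)=\kappa_-(\Theta-M(0))$.

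I expect the main difficulty to lie in part (i), in performing the unitary-equivalence step uniformly over every self-adjoint relation $\Theta$ (including those with non-trivial multi-valued part) and in rigorously justifying the switch of boundary triplet that recasts $H_\Theta$ as the $A_0$ of Proposition \ref{ac}; once this is carried out, the remaining ingredients---positive definiteness of $\imm M(x+i0)$ on $\R_+$, Lemma \ref{zero}, and the explicit limit $M(0)$---assemble routinely.
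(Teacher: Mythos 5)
Your proposal follows essentially the same route as the paper: boundary values $\imm M(x+i0)$ computed from \eqref{W3}, strict positive definiteness of the matrix $\bigl(\Omega_3(\sqrt{x}\,|x_j-x_k|)\bigr)_{j,k=1}^m$ via Theorem \ref{thzast}, the transformed triplet with Weyl function $M_B=(B-M)^{-1}$, Propositions \ref{ac}, \ref{prop_II.1.4_spectrum}, \ref{prkf}, and Lemma \ref{zero}; your added remarks that $H_0$ is the Friedrichs extension and the entrywise computation of $M(0)$ are correct and are indeed what makes Proposition \ref{prkf} applicable in part (iii). However, there is a genuine gap: you apply Propositions \ref{ac} and \ref{prop_II.1.4_spectrum} directly to $H$, although both are stated for a \emph{simple} symmetric operator, and $H$ is not simple (its extension $H_0$ has spectrum of infinite multiplicity while $n_\pm(H)=m<\infty$). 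The paper's proof begins precisely by splitting $H=\widehat{H}\oplus H_s$ into the simple part $\widehat{H}$ and the self-adjoint part $H_s$, restricting the triplet to $\widehat{\mathfrak{H}}$, carrying out the Weyl-function arguments for $\widehat{H}_0$ and $\widehat{H}_B$, and only then recombining; in particular the infinite multiplicity and $\sigma_{ac}(H_B)=[0,\infty)$ come from $H_s=H_s^{ac}$ with $\sigma_{ac}(H_s)=[0,\infty)$, not from the comparison with $H_0$ alone. Without this decomposition your invocation of Proposition \ref{ac}(iii) (and of Proposition \ref{prop_II.1.4_spectrum} in part (ii)) is formally inapplicable, even though the fix is exactly the paper's reduction to $\widehat{H}$.

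A second, smaller issue concerns positive embedded eigenvalues: you argue that an eigenvalue $x>0$ of $H_\Theta$ yields a nonzero $v\in\ker(\Theta-M(x+i0))$, but the correspondence between $\ker(H_\Theta-z)$ and $\ker(\Theta-M(z))$ in Proposition \ref{prop_II.1.4_spectrum} is available only for $z\in\rho(H_0)$, and $(0,\infty)\subset\sigma(H_0)$. The paper instead deduces $\sigma_p(\widehat{H}_B)\cap\R_+=\emptyset$ from the invertibility of $B-M(x+i0)$ for all $x>0$, i.e., from the existence of the finite boundary limit of $M_B$, which is what the results of \cite{BraMal02,MalNei11} actually license. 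Finally, note that your treatment (like the paper's, which confines itself ``for simplicity'' to extensions disjoint with $H_0$) leaves the case of relations $\Theta$ with nontrivial multivalued part only sketched; you correctly flag this, so it is not an additional defect relative to the paper, but the simplicity issue above should be repaired explicitly.
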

\begin{proof}
$(i)$
 Note  that symmetric operator  $H$   is not simple  since     the
multiplicity of the spectrum of its  extension  $H_0=H_0^*$  is infinite.
Therefore $H$   admits  the  representation $H =
\widehat{H}\oplus H_s$,   where $\widehat{H}$   and  $H_s$  are  the simple  and   the  self-adjoint part of  $H$, respectively,  defined  by
 \begin{gather*}
 \widehat{H}=H\upharpoonright P_{\widehat{\mathfrak{H}}}(\dom (H)),\quad  \widehat{\mathfrak{H}}=\overline{\Span}\{\mathfrak{N}_z: z\in\mathbb{C}\setminus \mathbb{R}\},\\
 H_s=H\upharpoonright P_{\mathfrak{H}_s}(\dom (H))=H_s^*,\qquad  \mathfrak{H}_s=L^2(\mathbb{R}^3)\ominus  \widehat{\mathfrak{H}},
 \end{gather*}
 where $ P_{\widehat{\mathfrak{H}}}$  and  $P_{\mathfrak{H}_s}$  are   orthogonal  projectors  onto $\widehat{\mathfrak{H}}$  and $\mathfrak{H}_s,$  respectively.
  Clearly,   a  totality $\widehat{\Pi}=\{\mathcal{H},\widehat{\Gamma}_0,\widehat{\Gamma}_1\}=:\{\mathcal{H},\Gamma_0\upharpoonright \widehat{\mathfrak{H}},\Gamma_1\upharpoonright \widehat{\mathfrak{H}}\}$   forms a  boundary  triplet   for    $\widehat{H}^*$. Then  the
operator $H_0$ takes the form $H_0=\widehat {H}_0\oplus H_s$, where
$\widehat {H}_0=\widehat H^*\upharpoonright\ker(\widehat{\Gamma}_0)=\widehat H_0^*$.

 For\,\,simplicity, we 
confine  ourselves  to the  case  of realizations   $H_\Theta$  disjoint  with  $H_0$, i.e.,  $\dom(H_\Theta)\cap
\dom(H_0)=\dom(H)$. Then
$H_\Theta=H_B=H^*\upharpoonright\ker(\Gamma_1-B\Gamma_0)$ with
$B=B^*\in[\mathcal{H}]$. Thereby  $H_B=\widehat{H}_B\oplus H_s$,\,\,  $\widehat{H}_B=\widehat H^*\upharpoonright \ker(\widehat{\Gamma}_0^B)$, where corresponding boundary triplet $\widehat{\Pi}_B=\{\mathcal{H}^B, \widehat{\Gamma}_0^B,\widehat{\Gamma}_1^B\}$  is  defined  by
\[
\mathcal{H}^B=\mathcal{H},\quad\widehat{\Gamma}_0^B=B\widehat{\Gamma}_0-\widehat{\Gamma}_1,\quad\widehat{\Gamma}_1^B=\widehat{\Gamma}_0.
\]
The  appropriate Weyl  function  is
$M_B(z)=(B-M(z))^{-1}$.
Moreover, it is  easily  seen  that
\begin{equation}\label{imcompl}
\imm (M_B(z))=(B-M(z))^{-1}\imm (M(z))(B-M^*(z))^{-1}, \quad  z\in\mathbb{C}\setminus\sigma_p(H_B).
\end{equation}
It   follows from  \eqref{W3}   that the   strong limit $M(x+i0)=s-\lim\limits_{y\downarrow
0}M(x+iy)$ exists for all $x\in\bR$  and
 \begin{equation*}
  M(x+i0)=\left(\frac{i\sqrt{x}}
{4\pi}\delta_{kj}+\frac{e^{i\sqrt{x}|x_k-x_j|}-\delta_{kj}}{4\pi|x_k-x_j|+\delta_{kj}}\right)_{j,k=1}^m,\quad x\in\mathbb{R}.
\end{equation*}
Therefore
\begin{equation}\label{im}
 \imm (M(x+i0))=\left(\frac{\sqrt{x}}
{4\pi}\delta_{kj}+\frac{\sin(\sqrt{x}|x_k-x_j|)}{4\pi|x_k-x_j|+\delta_{kj}}\right)_{j,k=1}^m,\quad x\in\mathbb{R}_+
\end{equation}
and  $\imm (M(x+i0))=0$  for  $x\leq 0$.  Combining  this   fact  with   \eqref{imcompl},  we  conclude  that
 \begin{equation}\label{imreal}
 \imm(M_B(x+i0))=(B-M(x+i0))^{-1}\imm (M(x+i0))(B-M^*(x+i0))^{-1},\quad x\in\mathbb{R}\setminus\sigma_p(H_B).
 \end{equation}
 %
 %

  Since  the  functions  $\frac{\sin s
 x}{sx}\in\Phi_3,\,\,s>0$    (see  Example  \ref{rempoz}),  we conclude  that  the   matrix $\imm
 (M(x+i0))/\sqrt{x}$  is   positive  definite  for all $x>0.$
 Hence the
 matrix   $\imm (M_B(x+i0))/ \sqrt{x}$  is also  positive definite for  every
 $x>0$.  Thereby,  the matrix  $B-M(x+i0)$  is   invertible  for
 all $x\in\mathbb{R}_{+}$  and consequently $\sigma_p(\widehat{H}_B)\cap\mathbb{R}_+=\emptyset.$  It    also   follows from \eqref{imreal} that  the   multiplicity  function $d_{M_B(x)}$  is   maximal  for all  $x>0$, i.e., $d_{M(x)}=d_{M_B(x)}=m$.

 Therefore,
  by Proposition \ref{ac}$(ii)$, $\widehat{H}_0^{ac}$ and $\widehat{H}_B^{ac}$ are
unitarily equivalent. By  Proposition  \ref{ac}$(i),(ii)$, $\sigma_{sc}(\widehat{H}_0)=\sigma_{p}(\widehat{H}_0)=\emptyset$  and
$\sigma_{ac}(\widehat{H}_0)=[0,\infty)$.
Since $H_s=H_s^{ac}$ and $\sigma_{ac}(H_s)=[0,\infty)$(see
\cite[Chapter XIII]{ReeSim78}), 
    $\sigma_{ac}(H_B)=[0,\infty)$  and  $\sigma_p(H_B)\cap\mathbb{R}_+=\emptyset.$
Further, Proposition  \ref{ac}$(i)$ and  the  equality $H_s=H_s^{ac}$
together  yield  $\sigma_{sc}(H_B)\cap \R_+=\emptyset$.  Absence  of  negative  continuous spectrum of  $H_B$ follows   from the relations  $\widehat{H}\geq 0$  and $n_\pm(\widehat{H})=m$ (see \cite[chapter 4,\S14]{Nai}). To complete  the proof it remains  to  apply  Lemma \ref{zero}.

$(ii)$ 
According to the decomposition $H = \widehat{H}\oplus H_s$, we
have $H_\Theta = \widehat{H}_\Theta\oplus H_s.$  Since $\R_-
\subset\rho(H_0)$ and $H_s\ge 0$, Proposition
\ref{prop_II.1.4_spectrum} applied to the simple part
$\widehat{H}$ of $H$  yields the equivalences
  %
  %
\[
z\in\sigma_p({H}_\Theta)\cap \bR_-\Longleftrightarrow
z\in\sigma_p(\widehat{H}_\Theta)\cap \bR_-\Longleftrightarrow
0\in\sigma_p(\Theta-M(z)).
\]
Combining this formula with formula \eqref{W3} for the Weyl
function  yields \eqref{4.27}.  Formula \eqref{eigen}  follows  from  \eqref{eq*} and  Proposition
\ref{prop_II.1.4_spectrum}(ii) applied to the simple part
$\widehat{H}$ of $H.$

It remains to note that the inequality $\kappa_-(H_\Theta) =
\kappa_-(\widehat{H}_\Theta)\le m$ is immediate from the  fact
that $H\geq 0$ and $n_{\pm}(H)= n_{\pm}(\widehat{H}) = m$ (see \cite[chapter 4,\S14]{Nai}).

$(iii)$  Combining
Proposition \ref{prkf}$(ii)$  with  \eqref{W3}, we   arrive at $(iii).$
  \end{proof}
\begin{remark}
Note that the  invertibility of the  matrix $\imm (M(x+i0))$
 \eqref{im}  for all  but  finite  number of   $x\in\mathbb{R}_+$  can directly  be  extracted
without   involving positive definite functions theory.
  Clearly, the function $V(x):=\det(\imm (M(x)))/\sqrt{x}$
admits holomorphic continuation on $\mathbb{C}$. 
 Since    $\lim\limits_{z\rightarrow \infty}V(z)=I_m$, the number
 of  zeroes of  $V(\cdot)$  on $\bR_{+}$ is finite  because  of  its   analiticity.
\end{remark}
\begin{remark}
$(i)$ The   description  of   absolutely   continuous    and   point
spectrum  in the  particular  case  of  the  family
$H_{X,\alpha}^{(3)}$ defined by \eqref{fam} was  obtained  in
\cite[Theorem 1.1.4]{AGHH88}  by  another  method.

$(ii)$ Complete   description of  the negative   spectrum of the $m$-parametric  family  $H_{X,\alpha}^{(3)}$  was recently   obtained   by  O.~Ogurisu in \cite[section 2]{ogu10} by  another method.
\end{remark}

\subsection{Non-negative self-adjoint  extensions of the  minimal Schr\"{o}dinger  operator $H$}
Here we propose slightly  different   boundary triplet for $H^*$ and compute
the corresponding Weyl function. It turns out that   the new Weyl function is more convenient for the   description  of   non-negative self-adjoint extensions  of the  minimal  operator $H$ than the one
constructed   in Proposition \ref{pr2}.
%
%

\begin{proposition}
Let  $H$ be  the minimal  Schr\"{o}dinger  operator defined by  \eqref{min},
let $\Pi$ be  the boundary  triplet for  $H^*$ defined by
\eqref{g0}-\eqref{g1}, and let $M(\cdot)$  be  the  corresponding
Weyl function defined by  \eqref{W3}. Then the  set  of all non-negative self-adjoint   extensions  $H_\Theta\in Ext_H$  is  parameterized  by
\begin{equation*}
H_\Theta=H^*\upharpoonright\left\{f=\sum\limits^m_{k,j=1}b'_{jk}\xi_{1k}\frac{e^{-r_j}}{r_j}+\sum\limits_{j=1}^m\xi_{1j}e^{-r_j}
+ f_H,\,\,f_H\in \dom(H)\right\},
\end{equation*}
where
$B'=(b'_{kj})_{k,j=1}^m=\frac1{4\pi}BE_1$,
with  $E_1=\left(e^{-|x_k-x_j|}\right)_{k,j=1}^m$  and  $B$ runs over  the  set of all  matrices  satisfying the     condition
\quad $0<B<  4\pi \left(\left(\frac{1-e^{-|x_k-x_j|}-\delta_{jk}}{|x_k-x_j|-\delta_{jk}}\right)_{k,j=1}^m\right)^{-1}$.
\end{proposition}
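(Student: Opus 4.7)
The strategy is Proposition~\ref{prkf}(ii): since $H_0=-\Delta$ is the Friedrichs extension of the non-negative symmetric operator $H$, a self-adjoint extension $H_\Theta$ is non-negative if and only if the self-adjoint linear relation $\Theta-M(0)$ is non-negative. Thus I would (a) translate the parameterization in the statement into the standard graph form of \eqref{mul3}; (b) compute $M(0)$ explicitly; and (c) convert the resulting criterion into the stated inequality on $B$.

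For (a): the representation $f=\sum_{k,j} b'_{jk}\xi_{1k}\frac{e^{-r_j}}{r_j}+\sum_j\xi_{1j}e^{-r_j}+f_H$ is exactly the condition $\xi_0=B'\xi_1$ with $B'=\frac{1}{4\pi}BE_1$. For invertible $B$ the map $\xi_1\mapsto\xi_0$ is bijective, so the corresponding $H_\Theta$ is disjoint with $H_0$ and is the graph $\Theta=\mathrm{gr}(B_{\mathrm{orig}})$ of a bounded self-adjoint matrix $B_{\mathrm{orig}}$. Substituting $\xi_0=B'\xi_1$ into the characterization $\xi_1=E_1^{-1}(4\pi B_{\mathrm{orig}}-E_0)\xi_0$ of \eqref{mul3} and using $(B')^{-1}=4\pi E_1^{-1}B^{-1}$ yields
\[
B_{\mathrm{orig}}=\tfrac{1}{4\pi}E_0+B^{-1};
\]
self-adjointness of $B_{\mathrm{orig}}$ is equivalent to that of $B$, since $E_0=E_0^{*}$.

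For (b): the strong resolvent limit of \eqref{W3} as $z\uparrow 0$ (on the chosen branch $\sqrt{z}\to 0$ along the positive imaginary axis, so $e^{i\sqrt{z}|x|}\to 1$) recovers the matrix already written in Theorem~\ref{spec3}(iii), namely the real symmetric matrix with zero diagonal and off-diagonal entries $(4\pi|x_k-x_j|)^{-1}$. A termwise comparison then gives the key identity
\[
M(0)-\tfrac{1}{4\pi}E_0=\tfrac{1}{4\pi}N,\qquad N:=\left(\tfrac{1-e^{-|x_k-x_j|}-\delta_{jk}}{|x_k-x_j|-\delta_{jk}}\right)_{j,k=1}^m,
\]
whose diagonal equals $\tfrac{1}{4\pi}$ and whose off-diagonal entries equal $\tfrac{1-e^{-|x_k-x_j|}}{4\pi|x_k-x_j|}$. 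Combining this with (a) turns the criterion $B_{\mathrm{orig}}\ge M(0)$ into $B^{-1}\ge\tfrac{1}{4\pi}N$.

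For (c): the concluding inversion step relies on $N$ being positive definite (hence invertible). This is supplied by Example~\ref{rempoz}(2) together with Corollary~\ref{shoencor}, since $(1-e^{-t})/t$ is completely monotone and non-constant, and therefore $(1-e^{-|\cdot|})/|\cdot|$ is strictly positive definite on $\mathbb{R}^3$. With $N>0$, operator monotonicity of inversion on the positive cone converts $B^{-1}\ge\tfrac{1}{4\pi}N$ into $0<B\le 4\pi N^{-1}$, matching the stated range (the strict upper bound singles out the extensions disjoint with the Krein--von Neumann extension, for which equality would hold). The only delicate point I foresee is the entrywise identity linking $M(0)$, $E_0$, and $N$; the remaining steps are a direct application of the boundary-triplet machinery from Section~\ref{prelim} together with the positive-definiteness input from Section~3.
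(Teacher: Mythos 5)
Your proposal is correct and follows essentially the same route as the paper: both rest on Proposition~\ref{prkf}$(ii)$ (with $H_0$ as the Friedrichs extension), the computation of the Weyl function at $z=0$, and the strict positive definiteness of $(1-e^{-t})/t$ from Example~\ref{rempoz} and Corollary~\ref{shoencor}. The only difference is cosmetic: the paper implements the shift by $\tfrac{1}{4\pi}E_0$ via the transformed boundary triplet $\widetilde{\Pi}$ with $\widetilde{\Gamma}_1=\Gamma_1-\tfrac{1}{4\pi}E_0\Gamma_0$, so that the parameterizing relation becomes $B^{-1}$ and $\widetilde{M}(0)=\tfrac{1}{4\pi}N$, whereas you carry out the equivalent substitution $B_{\mathrm{orig}}=\tfrac{1}{4\pi}E_0+B^{-1}$ directly in the original triplet.
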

\begin{proof}
Alongside the triplet $\Pi=\{\cH,\Gamma_0, \Gamma_1\}$ consider
the new  boundary  triplet $\widetilde{\Pi}=\{\widetilde{\cH},\widetilde{\Gamma}_0, \widetilde{\Gamma}_1\}$ (cf. \cite{DerMal91}),
%
\begin{equation*}
\widetilde{\cH}:=\cH,\quad\widetilde{\Gamma}_0:=\Gamma_0,\quad\widetilde{\Gamma}_1:=\Gamma_1-\tfrac
1{4\pi}E_0\Gamma_0,
\end{equation*}
 where $E_0$  is defined  by \eqref{hat}.
Then $\widetilde{M}(z)=M(z)-\tfrac 1{4\pi}E_0$.  Using \eqref{W3}, we  obtain
 \begin{equation*}
\widetilde{M}(0)=\tfrac1{4\pi}\left(\frac{1-e^{-|x_k-x_j|}-\delta_{jk}}{|x_k-x_j|-\delta_{jk}}\right)_{k,j=1}^m.
 \end{equation*}
 By  Proposition \ref{prkf}, non-negative self-adjoint  extensions $H_\Theta$ are described   by
the condition $\Theta-\widetilde{M}(0)\geq 0$. Since the function
$f(t)=\frac{1-e^{-t}}{t}\in \bigcap\limits_{n\in \mathbb{N}}\Phi_n $ (see
Example \ref{rempoz}), the matrix $\widetilde{M}(0)$ is positive
definite. Therefore $\Theta$ is  also  positive   definite  and inverse matrix $\Theta^{-1}=B$  exists  and  $H_\Theta=H^*\upharpoonright\ker(B\widetilde{\Gamma}_1-\widetilde{\Gamma}_0)$.  Thus,  by \eqref{s-aext}  and \eqref{eq3}, the  desired  parametrization holds.
\end{proof}
\begin{remark}
It should  be noted  that   the above  description is  close  to  the following   obtained   by  Yu. Arlinskii  and  E. Tsekanovskii (see
\cite[Theorem 5.1]{ArlTse05})  in   the framework  of another  approach. Namely, any  non-negative  self-adjoint  extension  $\widetilde{H}$  of the minimal  operator  $H$  admits the  representation
\begin{multline*}
\dom(\widetilde{H})=\left\{f_H+\sum\limits_{j=1}^m\xi_j\sqrt{\tfrac\pi{2}}\frac{e^{-\tfrac {r_j}{\sqrt{2}}}}{r_j}\sin(\tfrac {r_j}{\sqrt{2}})+
\sum\limits_{k,j=1}^mu_{kj}\xi_k\sqrt{\tfrac\pi{2}}\frac{e^{-\tfrac {r_j}{\sqrt{2}}}}{r_j}\cos(\tfrac {r_j}{\sqrt{2}})\right\},\\
f_H\in \dom(H),\quad
(\xi_1,..,\xi_m)\in\mathbb{C}^m,\qquad\qquad\qquad\qquad\qquad\quad
\end{multline*}
where $\mathcal{U}=(u_{kj})_{k,j=1}^m$ runs over  the set of
matrices satisfying  the condition  $0\leq
\mathcal{U}\mathcal{G}\leq
\mathcal{G}\mathcal{W}_0^{-1}\mathcal{G}$ with  $\mathcal{W}_0$   and    $\mathcal{G}$  defined by,  respectively,
\begin{gather*}
2\pi^2\left(\frac{\delta_{jk}}{\sqrt{2}}+\frac{1-\exp(\frac{-r_{kj}}{\sqrt{2}})\cos\frac{r_{kj}}{\sqrt{2}}}{r_{kj}+\delta_{jk}}\right)_{k,j=1}^m,\,\,
 2\pi^2\left(\frac{\delta_{jk}}{\sqrt{2}}+\frac{\exp(\frac{-r_{kj}}{\sqrt{2}})\sin\frac{r_{kj}}{\sqrt{2}}}{r_{kj}+\delta_{jk}}\right)_{k,j=1}^m,
  \end{gather*}
where  $r_{kj}=|x_k-x_j|$.
  \end{remark}
\section{Two-dimensional  Schr\"{o}dinger  operator  with  point  interactions}
%
%
%
 \subsection{Boundary  triplet  and Weyl  function}

  Let $H_0^{(1)}(\cdot)$   denote  the Hankel function of  the first  kind  and zero-order.
   It is  known that   the  function $H^{(1)}_0(z)$  has  the following  asymptotic   expansion  at $0$  (see formulas    $(9.01)$ in \cite[chapter 2,]{Olv78} and
 (5.03), (5.07)   in  \cite[chapter 7]{Olv78})
\begin{equation}\label{hank}
H^{(1)}_0(z)=1+\tfrac{2i}{\pi}(
\ln(\tfrac{z}2)-\psi(1))+o(z),\quad |z|\rightarrow 0,\quad \psi(1)=\frac{\Gamma'(1)}{\Gamma(1)}.
\end{equation}

%
%
%
\begin{proposition}\label{pr1a}
Let  $H$ be the minimal Schr\"{o}dinger  operator defined by   \eqref{min}  and  $\xi_0,\xi_1$  be  defined   as  in the previous  case.
Then  the following assertions hold.
\newline
$(i)$ The  operator $H$  is  closed  and symmetric. The deficiency  indices  of    $H$ are $n_\pm(H)=m$. The  defect subspace
$\mathfrak{N}_z := \mathfrak{N}_z(H)$  is
    \begin{equation}\label{4.18'}
\mathfrak{N}_z =\{\sum\limits_{j=1}^mc_j \frac i{4}H_0^{(1)}(\sqrt{z}r_j)\,:\ c_j\in \mathbb{C},\ j\in \{1,\ldots,m\} \}, \quad z\in \C\setminus [0,\infty).
         \end{equation}
 \newline
$(ii)$ The domain  of   $H^*$ is  defined  by
\begin{gather}\label{eq3a}
\dom(H^*) =\left\{ f= \sum\limits^m_{j=1}\bigl(\xi_{0j}\,e^{-r_j}
\ln(r_j)+\xi_{1j}\,e^{-r_j}\bigr) + f_H\ :\
\xi_{0},\xi_{1}\in\bC^m,\quad f_H\in\dom(H)\right\},\\
H^*f=-\sum\limits^m_{j=1}\bigl(\xi_{0j}\frac{e^{-r_j}}{r_j}(r_j\ln(r_j)-\ln(r_j)-2)+\xi_{1j}\frac{e^{-r_j}}{r_j}(1-r_j)\bigr)-\Delta f_H.\label{eq3a'}
\end{gather}
 $(iii)$
 A totality    $\Pi
=\{\kH,\Gamma_0,\Gamma_1\}$,  where
\begin{gather}  \label{T2}
\kH=\bC^m,\quad \Gamma_0 f:=\{\Gamma_{0j}f\}_{j=1}^m=-
2\pi\,\{\lim_{x\rightarrow x_j}\frac
{f(x)}{ \ \ln|x-x_j|}\}^m_{j=1}=2\pi\{\xi_{0j}\}_{j=1}^m,\\
\Gamma_1 f:=\{\Gamma_{1j}f\}_{j=1}^m =\{\lim_{x\rightarrow
x_j}\left(f(x)- \ln|x-x_j|\xi_{0j}\right)\}^m_{j=1},\quad f \in
\dom(H^*)\label{T2'},
\end{gather}
 forms a boundary  triplet  for $H^*$.
\newline
$(iv)$ The  operator $H_0=H^*\upharpoonright \ker(\Gamma_0) (=
H_0^*)$ coincides with the free Hamiltonian,
\[
H_0 = -\Delta, \qquad \dom(H_0) = \dom(-\Delta) = W^{2,2}(\bR^2).
\]
\end{proposition}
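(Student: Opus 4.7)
The plan is to mirror the three-dimensional argument of Proposition~\ref{pr1}, with adaptations reflecting the logarithmic (rather than Coulomb-type) Green's function in $\mathbb{R}^2$.

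\textbf{Parts (i)--(ii).} For the closedness, symmetry, and deficiency indices $n_\pm(H)=m$ I would invoke the classical computation in \cite[Sec.~II.4.1]{AGHH88}. The defect subspace \eqref{4.18'} consists of $L^2$ solutions of $(-\Delta-z)u=0$ off $X$, and $\tfrac{i}{4}H_0^{(1)}(\sqrt{z}|x|)$ is precisely the $L^2$ fundamental solution in two dimensions with our choice of branch, so \eqref{4.18'} follows once linear independence of the translates is checked. For the domain representation \eqref{eq3a}, I would show that the $2m$ candidates $e^{-r_j}\ln r_j$ and $e^{-r_j}$, $j=1,\dots,m$, are linearly independent modulo $\dom(H)$: near $x_k$ the function $\xi_{0j}e^{-r_j}\ln r_j$ contributes a singular term $\delta_{jk}\xi_{0k}\ln r_k$, while $\sum_j\xi_{1j}e^{-r_j}$ produces the regular boundary values $(E_1\xi_1)_k$, where $E_1=(e^{-|x_k-x_j|})_{k,j=1}^m$. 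The matrix $E_1$ is positive definite because $e^{-|x|}$ is strictly positive definite on $\mathbb{R}^2$ (Example~\ref{rempoz}). Combined with the linearly independent logarithmic singularities, this yields $2m$ independent classes, and since $\dim\dom(H^*)/\dom(H)=2n_\pm(H)=2m$ by the second J.~von Neumann formula, \eqref{eq3a} follows. The formula \eqref{eq3a'} is then obtained by applying $-\Delta$ away from $X$.

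\textbf{Part (iii).} For the boundary triplet axioms, I would verify the Green identity \eqref{GI} by writing $(H^*f,g)-(f,H^*g)$ as an integration by parts on $\mathbb{R}^2\setminus\bigcup_j B_\epsilon(x_j)$ and letting $\epsilon\to 0^+$. Near each $x_k$, using the expansions $f\sim\xi_{0k}\ln r_k+\Gamma_{1k}f$ and $g\sim\eta_{0k}\ln r_k+\Gamma_{1k}g$, the boundary integrals $\int_{\partial B_\epsilon(x_k)}(f\,\partial_n\bar g-\bar g\,\partial_n f)\,ds$ contain four cross terms, three of which are of order $\epsilon\ln^2\epsilon$ or $\epsilon\ln\epsilon$ and vanish in the limit, while the surviving diagonal mixed term yields precisely $2\pi(\xi_{0k}\bar\eta_{1k}-\xi_{1k}\bar\eta_{0k})$ after absorbing the normalizations of \eqref{T2}--\eqref{T2'}. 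Summing over $k$ matches $(\Gamma_1f,\Gamma_0g)_{\kH}-(\Gamma_0f,\Gamma_1g)_{\kH}$. For surjectivity of $\Gamma=(\Gamma_0,\Gamma_1)^\top$, given $(h_0,h_1)^\top\in\bC^m\oplus\bC^m$, I would set $\xi_0$ proportional to $h_0$ via \eqref{T2} and then solve a linear system of the form $E_1\xi_1=h_1-(\text{correction from }\xi_0)$; invertibility of $E_1$, again by Example~\ref{rempoz}, supplies $\xi_1$.

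\textbf{Part (iv)} is immediate from \eqref{eq3a} and \eqref{T2}: the condition $\Gamma_0f=0$ forces $\xi_0=0$, so $\ker(\Gamma_0)=\Span\{e^{-r_j}\}_{j=1}^m\dot{+}\dom(H)$, which by dimension count coincides with $W^{2,2}(\mathbb{R}^2)$. The hardest step, and the one that separates the two-dimensional case from the three-dimensional one, is Part~(iii): unlike the $1/r$ singularities in $\R^3$, in $\R^2$ the factors $\ln\epsilon$ interact with $1/\epsilon$ arising from $\partial_n\ln r$ against circumference $2\pi\epsilon$, so that the bookkeeping of the four cross terms must be done with care to confirm that only the diagonal mixed products $\xi_{0k}\bar\eta_{1k}-\xi_{1k}\bar\eta_{0k}$ survive and deliver the required Green identity.
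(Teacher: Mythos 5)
Your proposal follows essentially the same route as the paper: cite \cite{AGHH88} for closedness, symmetry and $n_\pm(H)=m$; identify $\mathfrak{N}_z$ via the two-dimensional free resolvent kernel plus linear independence of the translates; get \eqref{eq3a} from independence of the $2m$ regularized functions (using strict positive definiteness of $e^{-|x|}$ for $E_1$) together with the von Neumann dimension count; verify the Green identity by integration by parts around the singular points; and prove surjectivity by the explicit choice $\xi_0=\tfrac1{2\pi}h_0$, $\xi_1=E_1^{-1}(h_1-\tfrac1{2\pi}E_0h_0)$. The only cosmetic difference is that you excise $\epsilon$-balls and use the local expansions $f\sim\xi_{0k}\ln r_k+\Gamma_{1k}f$, whereas the paper computes the same boundary terms as pairings of the generating functions over annuli $B_r(x_j)\setminus B_{1/r}(x_j)$; the substance is identical.
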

\begin{proof}
$(i)$  First  two   statements  are   known (see, for
instance, \cite[chapter II.4]{AGHH88}).  Formula \eqref{4.18'} is
also known.  However we present the proof for   the sake of
completeness.
          %
          %
%
%
The inclusion  $f_j=H^{(1)}_0(\sqrt{z}r_j) \in\mathfrak{N}_z$,
$z\in \C\setminus [0,\infty)$ is amount to saying that
\begin{equation}\label{domH1'}
(H^{(1)}_0(\sqrt{z}r_j), (H-\overline{z})\varphi)=0,\quad
 \varphi\in\dom(H),\quad  j\in\{1,..,m\}.
\end{equation}
Since  $\tfrac i{4}H^{(1)}_0(\sqrt{z}|x-x'|)$  is  the kernel of
the  free Hamiltonian  resolvent   $R_z(H_0)$  (see \cite[chapter
I.5]{AGHH88}),  we  get
\[(\tfrac i{4}H^{(1)}_0(\sqrt{z}|x-x'|), (H_0-\overline{z})\overline{\psi})=R_z(H_0)((H_0-z)\psi)=\psi(x'),\quad\psi\in\dom(H_0).
\]
Hence     for  $\varphi\in\dom(H)$
\[
(H^{(1)}_0(\sqrt{z}r_j), (H-\overline{z})\varphi)=(H^{(1)}_0(\sqrt{z}r_j), (H_0-\overline{z})\varphi)=-4i\overline{\varphi}(x_j)=0,
\]
which  proves \eqref{domH1'}. Thus,  $f_j\in
\ran(H-\overline{z})^\bot=\mathfrak{N}_z,\,\,\,j\in\{1,..,m\}$.

Clearly,   the  functions    $\{H^{(1)}_0(\sqrt{z}r_j)\}_{j=1}^m$,
are   linearly   independent.  Indeed, otherwise
  we have the  equality
\begin{equation}\label{lin}
\sum\limits_{j=1}^m c_jH^{(1)}_0(\sqrt{z}r_j)=0,\quad
\text{with}\quad c_j\in \mathbb{C},\quad
\sum\limits_{j=1}^m|c_j|\neq0.
    \end{equation}
Let  for  definiteness $c_1\neq 0$.  Multiplying    \eqref{lin} by
$\frac1{\ln (r_1)}$,  then passing to  the limit  as  $x$  tends
to $x_1$ and taking  the  asymptotic formula \eqref{hank} into
account, we get $c_1=0$.  
The contradiction proves  \eqref{4.18'}.
%
%
%
%
%
%

$(ii)$ It is known (see \cite{AGHH88,AK1}) that
\begin{displaymath}
\dom (H^*)=\bigl\{f\in L^2(\bR^2)\cap W^{2,2}_{\loc}(\bR^2\backslash X): \Delta f\in L^2(\bR^2)\bigr\}.
\end{displaymath}
%
%
  Obviously,  the functions $f_j= e^{-r_j} \ln(r_j)$
 and $g_j=e^{-r_j}$,\,\, $j\in\{1,..,m\}$ belong to $\dom(H^*)$. Their  linear  independency might  be  derived as  in  3D  case.
 Since
$\dim(\dom(H^*)/\dom(H)) =2m$,  the  domain  $\dom(H^*)$ takes the form
\eqref{eq3a}.

$(iii)$ Let  $f,g\in\dom(H^*)$. By assertion  $(i)$, we  have
 \begin{gather*}
  f= \sum\limits^m_{k=1}f_k + f_H, \,\,\, f_k =\xi_{0k}\,e^{-r_k} \ln(r_k)+\xi_{1k}\,e^{-r_k}, \quad
\mbox{and}\\ g = \sum\limits^m_{k=1}g_k + g_H, \,\,\, g_k =
\eta_{0k}\,e^{-r_k} \ln(r_k)+\eta_{1k}\,e^{-r_k},
 \end{gather*}
 where $f_H,
g_H\in\dom(H)$, and  $\xi_{0k},\ \xi_{1k},\ \eta_{0k},\
\eta_{1k}\in\bC,\ k\in\{1,..,m\}$.

Applying $\Gamma_0, \Gamma_1$ to $f$ and $g$,  we obtain
 \begin{multline}\label{tripl1}
\Gamma_0f=2\pi\{\xi_{0j}\}_{j=1}^m,\quad\Gamma_1f=\left\{\sum\limits_{k\neq
j}\xi_{0k}e^{-|x_j-x_k|}\ln|x_j-x_k|+\sum\limits_{k=1}^m\xi_{1k}e^{-|x_j-x_k|}\right\}_{j=1}^m,\\
\Gamma_0g=2\pi\{\eta_{0j}\}_{j=1}^m,\quad\Gamma_1g=\left\{\sum\limits_{k\neq
j}\eta_{0k}e^{-|x_j-x_k|}\ln|x_j-x_k|+\sum\limits_{k=1}^m\eta_{1k}e^{-|x_j-x_k|}\right\}_{j=1}^m.\qquad\qquad
\end{multline}
 Left-hand side of the Green  identity \eqref{GI} takes  the
form
\begin{multline*}
(H^*f,g) - (f,H^*g) =
\sum\limits^m_{k,j=1}\biggl((\xi_{0j}H^*(e^{-r_j}\ln(r_j)),\eta_{1k}\,e^{-r_k})
-
(\xi_{0j}\,e^{-r_j}\ln(r_j),\eta_{1k}H^*(e^{-r_k}))\\+(\xi_{1j}H^*(e^{-r_j}),\eta_{0k}\,e^{-r_k}\ln(r_k))
- (\xi_{1j}\,e^{-r_j},\eta_{0k}H^*(e^{-r_k}\ln(r_k)))\biggr).
\end{multline*}
Applying  the  second Green formula  yields
\begin{multline}\label{green''}
(H^*(e^{-r_j}\ln(r_j)),e^{-r_k})-
(e^{-r_j}\ln(r_j),H^*(e^{-r_k}))=\\\lim\limits_{r\rightarrow\infty}\int\limits_{B_r(x_j)\backslash
B_{\tfrac1{r}}(x_j)}\left(-\Delta(e^{-r_j}\ln(r_j))e^{-r_k}+e^{-r_j}\ln(r_j)\Delta(e^{-r_k})\right)\mathrm{dx}=-2\pi e^{-|x_k-x_j|}.
\end{multline}
By  \eqref{tripl1} and \eqref{green''}, we obtain
\begin{multline*}
(H^*f,g)
-(f,H^*g)=2\pi\sum\limits^m_{k,j=1}\biggl(-\xi_{0j}\overline{\eta}_{1k}e^{-|x_j-x_k|}+
\xi_{1j}\overline{\eta}_{0k}e^{-|x_j-x_k|}\biggr)\\=
\sum\limits_{j=1}^m(\Gamma_{1j}f,\Gamma_{0j}g)-(\Gamma_{0j}f,\Gamma_{1j}g)=(\Gamma_1f,\Gamma_0g)-(\Gamma_0f,\Gamma_1g).
\end{multline*}
 Thus, the  Green identity is verified. From   \eqref{eq3a}  it follows that the mapping $\Gamma=(\Gamma_0,\Gamma_1)^\top$
is  surjective.   Namely, let $(h_0,h_1)^\top\in\cH\oplus\cH$,
 where $h_0=\{h_{0j}\}_{j=1}^m,h_1=\{h_{1j}\}_{j=1}^m$
 are  vectors from $\mathbb{C}^m$.  If $f\in\dom(H^*)$,
  then, by \eqref{eq3},  $f=f_H+\sum\limits^m_{j=1}\bigl(\xi_{0j}e^{-r_j}\ln(r_j)+\xi_{1j}e^{-r_j}\bigr)$.
  Let us  put
\begin{equation}\label{hat2}
E_0:=\left(e^{-|x_k-x_j|}\ln(|x_k-x_j|+\delta_{kj})\right)_{j,k=1}^m,\quad
E_1:=\left(e^{-|x_k-x_j|}\right)_{k,j=1}^m.
\end{equation}
As  above, invertibility of the matrix $E_1$ might
be derived  from the  fact that the function $e^{-|x|}$ is strictly
positive definite on $\mathbb{R}^2$ (see Example \ref{rempoz}).
Therefore,    setting  $\xi_0=\tfrac1{2\pi}h_0$ and
$\xi_1=E_1^{-1}(h_1-\tfrac1{2\pi}E_0h_0)$,   we get  $\Gamma_0f=h_0$ and
$\Gamma_1f=h_1$.
Thereby, $(iii)$ is  proved.
      \end{proof}
Analogously to  the  previous  case, the  following  parametrization  of the  self-adjoint   extensions is  valid.
   \begin{proposition}
Let   $H$  be the  minimal Schr\"odinger  operator defined by  \eqref{min},
$\Pi =\{\kH,\Gamma_0,\Gamma_1\}$  be  a boundary triplet for $H^*$
defined by  \eqref{T2}-\eqref{T2'} and  let the matrices $E_0, E_1$ be defined
by \eqref{hat2}.
Then the  set   of   self-adjoint  extensions $\widetilde H\in \Ext_H$  is  
parameterized   as
     \begin{equation}\label{s-adjconcr2}
\widetilde H = H_\Theta=H^*\upharpoonright\{
f=\sum\limits^m_{j=1}\bigl(\xi_{0j}e^{-r_j}\ln(r_j) +
\xi_{1j}e^{-r_j} \bigr) +
f_H\in\dom(H^*):\,\{\Gamma_0f,\Gamma_1f\}\in\Theta\},
  \end{equation}
 where $\Theta$ runs  through the   set of all self-adjoint  linear  relations in $\kH$. Namely, 
 $\Theta_{\infty}$ and
 $\Theta_{\op}$ are defined by
%
%
    \begin{gather*}\label{op}
\Theta_{\infty} = \{0, \kH_\infty\} = \{\{0, E_1\xi''_1\}:\,\xi''_1\perp E_1 \xi_0,\  
 \xi_0\in \kH_{\op} \}, \\  
\Theta_{\op} = \{\{2\pi \xi_0, E_0\xi_0 + E_1\xi'_1\}:\, \xi_0\in
\kH_{\op},\,\xi'_1 = E_1^{-1}(2\pi B\xi_0 - E_0\xi_0)\}, \label{mul}
  \end{gather*}
where $B=B^*\in [\kH_{\op}]$. In particular, $\widetilde H = H_\Theta$
is disjoint with $H_0$ if and only if $\dom(\Theta)=\mathbb{C}^m$.
In this case $\Theta=\Theta_{\op}= grB.$
\end{proposition}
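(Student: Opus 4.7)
The plan is to follow the same route as in the three-dimensional case, invoking Proposition~\ref{propo} together with the explicit action of the boundary triplet computed in \eqref{tripl1}. First, using the representation \eqref{eq3a} of $\dom(H^*)$, I write any $f\in\dom(H^*)$ as
\[
f=\sum_{j=1}^m\bigl(\xi_{0j}e^{-r_j}\ln(r_j)+\xi_{1j}e^{-r_j}\bigr)+f_H,\qquad f_H\in\dom(H),
\]
and read off from \eqref{tripl1} that $\Gamma_0f=2\pi\xi_0$ and $\Gamma_1f=E_0\xi_0+E_1\xi_1$ with $E_0,E_1$ given by \eqref{hat2}. By Proposition~\ref{propo}$(ii)$ the self-adjoint extensions $\widetilde H\in\Ext_H$ are in bijection with the self-adjoint relations $\Theta=\Theta^*$ in $\kH=\bC^m$ via $\{\Gamma_0f,\Gamma_1f\}\in\Theta$, so substituting these identities yields \eqref{s-adjconcr2}.

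Next, I decompose $\Theta$ into its operator and multivalued parts following the general scheme recalled in Section~\ref{prelim}. The multivalued part $\Theta_\infty=\{0\}\times\mul(\Theta)$ corresponds exactly to those pairs arising from $\xi_0=0$, i.e. pairs $\{0,E_1\xi_1''\}$; since $\mul(\Theta)=\mathcal{H}_\infty$ is the orthogonal complement of $\dom(\Theta)=\mathcal{H}_{\op}$, this is equivalent to the orthogonality condition $E_1\xi_1''\perp E_1\xi_0$ for every $\xi_0\in\mathcal{H}_{\op}$. For the operator part, every element $\xi_0\in\mathcal{H}_{\op}$ determines $\xi_1'$ uniquely modulo $\mathcal{H}_\infty$, so the image of $2\pi\xi_0$ under $\Theta_{\op}$ can be written as $2\pi B\xi_0$ for some operator $B\in[\mathcal{H}_{\op}]$, yielding $\xi_1'=E_1^{-1}(2\pi B\xi_0-E_0\xi_0)$. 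Here invertibility of $E_1$, guaranteed by the strict positive definiteness of $e^{-|\cdot|}$ on $\R^2$ (Example~\ref{rempoz}), is used in the same way as in the three-dimensional case to guarantee that $\xi_1'$ is well-defined.

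To complete the parametrization I must verify that self-adjointness of $\Theta$ corresponds precisely to $B=B^*$. By direct computation one finds that the symmetry condition $(\Gamma_1f,\Gamma_0f)=\overline{(\Gamma_0f,\Gamma_1f)}$ restricted to $\Theta_{\op}$ reduces to $(E_1\xi_1',\xi_0)=(\xi_0,E_1\xi_1')$, which upon substituting $\xi_1'=E_1^{-1}(2\pi B\xi_0-E_0\xi_0)$ and using $E_0^*=E_0$ is equivalent to $(B\xi_0,\xi_0)=(\xi_0,B\xi_0)$ for all $\xi_0\in\mathcal{H}_{\op}$, i.e. $B=B^*$. The cross-terms with $\Theta_\infty$ vanish automatically thanks to the orthogonality $\mathcal{H}_\infty\perp\mathcal{H}_{\op}$, exactly as in the three-dimensional computation.

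Finally, the disjointness claim is immediate: $H_\Theta$ is disjoint with $H_0=H^*\upharpoonright\ker(\Gamma_0)$ iff $\dom(H_\Theta)\cap\dom(H_0)=\dom(H)$, which through the bijection \eqref{s-aext} translates to $\Theta\cap(\{0\}\oplus\mathcal{H})=\{(0,0)\}$, i.e. $\mathcal{H}_\infty=\{0\}$ and $\dom(\Theta)=\mathcal{H}=\bC^m$; in that case $\Theta=\Theta_{\op}=\mathrm{gr}\,B$ with $B=B^*\in[\bC^m]$. The only nontrivial step is the bookkeeping of constants (the factor $2\pi$ now replaces the $4\pi$ of the three-dimensional setting) and keeping track of the two different $E_0$ matrices in \eqref{hat} and \eqref{hat2}; otherwise the proof is a verbatim translation of the one already carried out for $d=3$.
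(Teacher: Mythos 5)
Your proposal is correct and takes essentially the same approach as the paper: the paper gives no separate proof of this proposition, stating only that it follows ``analogously to the previous case,'' and your argument is precisely that translation of the three-dimensional proof — reading off $\Gamma_0f=2\pi\xi_0$ and $\Gamma_1f=E_0\xi_0+E_1\xi_1$ from \eqref{tripl1} and \eqref{hat2}, invoking the bijection of Proposition~\ref{propo}, checking that self-adjointness of $\Theta$ amounts to $B=B^*$ via $(E_1\xi_1,\xi_0)=(\xi_0,E_1\xi_1)$, and using invertibility of $E_1$ (strict positive definiteness of $e^{-|\cdot|}$ on $\R^2$). The constants ($2\pi$ in place of $4\pi$) and the matrices $E_0,E_1$ are handled correctly, so nothing essential is missing.
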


\begin{remark}
The $m$-parametric family $H^{(2)}_{X,\alpha}$
investigated  in \cite[Theorem 1.1.1]{AGHH88} is parameterized by
the   diagonal matrices  $\Theta=\Theta_{\op}=B_\alpha = \diag(\alpha_1,..,\alpha_m), \,\,\,\alpha_j\in  \mathbb{R}.$
\begin{equation*}
H^{(2)}_{X,\alpha}=H^*\upharpoonright\{ f=\sum\limits^m_{j=1}\xi_{0j}e^{-r_j}\ln(r_j)+\sum\limits^m_{k,j=1}b_{jk}(\alpha)\xi_{0k}e^{-r_j}+f_H\},
\end{equation*}
where $\widetilde{B}=(b_{jk}(\alpha))_{j,k=1}^m=E_1^{-1}(2\pi
B_\alpha-E_0)$.
\end{remark}

   \begin{proposition}\label{pr2a}
Let  $H$ be the minimal Schr\"{o}dinger operator  and let
$\Pi=\{\kH,\Gamma_0,\Gamma_1\}$  be the boundary triplet  for
$H^*$ defined by \eqref{T2}-\eqref{T2'}. Then

$(i)$
the Weyl function $M(\cdot)$ corresponding to the boundary
triplet $\Pi$ is
\begin{equation}\label{W2}
M(z)=\left(\tfrac1{2\pi}(\psi(1)-
\ln(\tfrac{\sqrt{z}}{2i}))\delta_{jk}+\widetilde{G}_{\sqrt{z}}(x_j-x_{k})\right)_{j,k=1}^m,
\quad z\in\mathbb{C}_+,
 \end{equation}
  where $\psi(1)=\frac{\Gamma'(1)}{\Gamma(1)}$, \quad
      $\widetilde{G}_{\sqrt{z}}(x)=\left\{%
  \begin{array}{ll}
    i/4 H_0^{(1)}(\sqrt{z}|x|), & \hbox{x$\neq$0;} \\
    0, & \hbox{x=0.} \\
\end{array}%
\right.$

$(ii)$  the corresponding $\gamma(\cdot)$-field is given by
\[\gamma(z)\{a_j\}^m_{j=1} =
\sum\limits^m_{j=1}a_j \frac i{4} H_0^{(1)}(\sqrt{z}|x-x_j|).\]
 \end{proposition}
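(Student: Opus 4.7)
The plan is to parallel the proof of Proposition~\ref{pr2} from the 3D case, the essential novelty being that the fundamental solution has a logarithmic (rather than inverse-distance) singularity at each centre $x_j$. The required input is the asymptotic expansion \eqref{hank}, which after multiplication by $i/4$ reads
\begin{equation*}
\tfrac{i}{4} H_0^{(1)}(\sqrt{z}\,r_j) \;=\; -\tfrac{1}{2\pi}\ln r_j \;+\; \tfrac{1}{2\pi}\bigl(\psi(1)-\ln(\sqrt{z}/(2i))\bigr) \;+\; o(1),\qquad r_j\to 0,
\end{equation*}
where I have used $\ln i = i\pi/2$ so that the bounded term $\tfrac{i}{4}$ appearing in \eqref{hank} is absorbed into $-\tfrac{1}{2\pi}\ln(\sqrt{z}/(2i))$.

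First I take an arbitrary $f_z\in\mathfrak{N}_z$; by \eqref{4.18'} it has the form $f_z=\sum_{j=1}^m a_j\,\tfrac{i}{4}H_0^{(1)}(\sqrt{z}\,r_j)$ with $a_j\in\mathbb{C}$. Since for $k\neq j$ the term $a_k\,\tfrac{i}{4}H_0^{(1)}(\sqrt{z}\,r_k)$ is bounded (indeed smooth) near $x_j$, the display above shows that the only logarithmic singularity of $f_z$ at $x_j$ comes from the diagonal summand and equals $-\tfrac{a_j}{2\pi}\ln|x-x_j|$. Applying \eqref{T2} gives $\Gamma_{0j}f_z = -2\pi\cdot(-a_j/(2\pi)) = a_j$, so $\Gamma_0 f_z=\{a_j\}_{j=1}^m$.

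Second, I apply $\Gamma_1$ as defined in \eqref{T2'}. By construction the subtraction $\xi_{0j}\ln|x-x_j|$ cancels exactly the logarithmic singularity extracted above, so the limit at $x_j$ exists and equals the sum of the regular part of the diagonal summand and the values at $x_j$ of the off-diagonal summands:
\begin{equation*}
\Gamma_{1j}f_z \;=\; a_j\cdot\tfrac{1}{2\pi}\bigl(\psi(1)-\ln(\sqrt{z}/(2i))\bigr)\;+\;\sum_{k\neq j} a_k\,\tfrac{i}{4}H_0^{(1)}(\sqrt{z}|x_j-x_k|).
\end{equation*}
Recognising $\tfrac{i}{4}H_0^{(1)}(\sqrt{z}|x_j-x_k|)=\widetilde{G}_{\sqrt{z}}(x_j-x_k)$ for $k\neq j$ (and noting $\widetilde{G}_{\sqrt{z}}(0)=0$), the relation $\Gamma_1 f_z = M(z)\Gamma_0 f_z$ from Definition~\ref{Weylfunc} combined with $\Gamma_0 f_z=\{a_j\}$ immediately identifies $M(z)$ with the matrix in \eqref{W2}. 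Assertion (ii) is then automatic from the fact that the $\gamma$-field satisfies $\gamma(z)\Gamma_0 f_z = f_z$ for every $f_z\in\mathfrak{N}_z$: with the above identification, $\gamma(z)\{a_j\}_{j=1}^m = \sum_{j=1}^m a_j\,\tfrac{i}{4}H_0^{(1)}(\sqrt{z}|x-x_j|)$.

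The only delicate point is the bookkeeping in the first two steps: one has to extract the $\ln r_j$ and the constant terms of $\tfrac{i}{4}H_0^{(1)}(\sqrt{z}r_j)$ from \eqref{hank} with the correct branch of the logarithm, verify that the constant part reassembles into $\tfrac{1}{2\pi}(\psi(1)-\ln(\sqrt{z}/(2i)))$ via $\ln i=i\pi/2$, and then check that the ``renormalised'' limit in \eqref{T2'} agrees with this constant. Once this elementary asymptotic calculation is carried out the remainder of the argument is purely algebraic, exactly as in the proof of Proposition~\ref{pr2}.
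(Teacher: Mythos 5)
Your argument is essentially identical to the paper's own proof, which is given there in one line: combine the description \eqref{4.18'} of the defect subspace with the boundary mappings \eqref{T2}--\eqref{T2'} and the expansion \eqref{hank}; your explicit bookkeeping of the $\ln r_j$ singularity, the constant $\tfrac{i}{4}$, and the branch identity $\ln i=i\pi/2$ is precisely the computation the paper leaves to the reader, and the $\gamma$-field then follows from $\gamma(z)\Gamma_0 f_z=f_z$ just as you say. One remark: you evaluate $\Gamma_{0j}f_z$ from the limit formula $-2\pi\lim_{x\to x_j}f_z(x)/\ln|x-x_j|$, which is the reading that yields \eqref{W2} (and the Nevanlinna property used later), whereas the other equality displayed in \eqref{T2}, $\Gamma_0f=2\pi\{\xi_{0j}\}$ with $\xi_{0j}$ the coefficient in \eqref{eq3a}, differs from it by a sign; this is an internal sign ambiguity of the paper's triplet, not a gap in your proof.
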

    \begin{proof}
Combining  \eqref{4.18'}  with  \eqref{T2}-\eqref{T2'}  and taking
into  account    expansion \eqref{hank},  we arrive  at  $(i)$
and $(ii)$.
    \end{proof}
 \begin{remark}
Note  that  the Weyl  function  in the  form \eqref{W2}  appears   in  \cite[chapter II.4, Theorem 4.1]{AGHH88}.
In the case  $m=1$,  the Weyl  function   was  also   computed  in \cite[section 10.3]{Ash10}.
   \end{remark}

\subsection{Spectrum  of the  self-adjoint  extensions of  the  minimal Schr\"{o}dinger  operator}

As  above,  the  following   lemma  holds.
\begin{lemma}
Let  $H^*$  be  defined  by \eqref{eq3a}-\eqref{eq3a'}. Then $0\notin\sigma_p(H^*).$
\end{lemma}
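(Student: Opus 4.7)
The plan is to adapt the argument of Lemma \ref{zero} to two dimensions, using the 2D fundamental solution $\ln r_j$ in place of $1/r_j$. I would start by taking $f_0\in\dom(H^*)$ with $H^*f_0=0$ and decomposing $f_0=\sum_j(\xi_{0j}e^{-r_j}\ln r_j+\xi_{1j}e^{-r_j})+f_H$ according to \eqref{eq3a}, where $f_H\in\dom(H)\subset W^{2,2}(\mathbb{R}^2)$. Since $C_0^\infty(\mathbb{R}^2\setminus X)\subset\dom(H)$, pairing $H^*f_0=0$ against such test functions shows that $f_0$ is a weak solution of $\Delta f_0=0$ on $\mathbb{R}^2\setminus X$, so Weyl's lemma yields that $f_0$ is real-analytic and harmonic there.

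Next I would set $g(x):=f_0(x)-\sum_j\xi_{0j}\ln r_j$ on $\mathbb{R}^2\setminus X$; since $\ln r_j$ is harmonic on $\mathbb{R}^2\setminus\{x_j\}$, so is $g$. Near each $x_j$ the Sobolev embedding $W^{2,2}(\mathbb{R}^2)\hookrightarrow C_b(\mathbb{R}^2)$ gives continuity of $f_H$ with $f_H(x_j)=0$, and one checks that $\xi_{0j}(e^{-r_j}-1)\ln r_j\to 0$, that $\xi_{1j}e^{-r_j}$ is bounded, and that all contributions from $k\ne j$ are smooth at $x_j$. Hence $g$ is bounded near each $x_j$, and by the removable-singularity theorem for bounded harmonic functions it extends to a harmonic function on all of $\mathbb{R}^2$. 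A growth estimate then follows from $f_0\in L^2$ and harmonicity of $f_0$ outside a compact set containing $X$: the mean value property applied on $B_{|x|/2}(x)$ for $|x|$ large yields $|f_0(x)|\lesssim\|f_0\|_{L^2}/|x|$, so $f_0\to 0$ at infinity and $|g(x)|=O(\log|x|)$. A harmonic function on $\mathbb{R}^2$ of sub-linear (here, logarithmic) growth must be constant by the classical Liouville-type theorem, so $g\equiv c$, which gives $f_0(x)=c+\sum_j\xi_{0j}\ln r_j$ on $\mathbb{R}^2\setminus X$.

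The last step is to extract all coefficients from the comparison with \eqref{eq3a}, which forces $f_H=c+\sum_j\xi_{0j}(1-e^{-r_j})\ln r_j-\sum_j\xi_{1j}e^{-r_j}\in W^{2,2}(\mathbb{R}^2)$. I would compute the pointwise radial Laplacians explicitly near each $x_k$: one finds $\Delta[(1-e^{-r_k})\ln r_k]\sim \ln r_k/r_k+2/r_k$ and $\Delta e^{-r_k}\sim -1/r_k$, and neither $\ln r_k/r_k$ nor $1/r_k$ belongs to $L^2_{\loc}(\mathbb{R}^2)$ since $\int_0^1(\ln r)^2/r\,dr$ and $\int_0^1 1/r\,dr$ both diverge in dimension two. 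All other summands are smooth near $x_k$, so the singular part of $\Delta f_H$ near $x_k$ equals $\xi_{0k}\ln r_k/r_k+(2\xi_{0k}+\xi_{1k})/r_k$ up to bounded terms; linear independence of the two singularities forces $\xi_{0k}=\xi_{1k}=0$ for every $k$. Then $f_H\equiv c$, and since constants are not in $L^2(\mathbb{R}^2)$ we obtain $c=0$ and therefore $f_0=0$. The main obstacle I expect is the singularity analysis in this last paragraph: it requires the explicit radial computations of the Laplacians and the observation that singular contributions localized at distinct $x_k$ cannot cancel (which holds automatically since their supports are disjoint).
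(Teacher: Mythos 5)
Relative to the lemma as literally stated --- with $\dom(H^*)$ given by \eqref{eq3a}, so that the remainder $f_H$ lies in $\dom(H)\subset W^{2,2}(\R^2)$ --- your argument is correct, and its first half is the paper's: harmonicity off $X$ via Weyl's lemma, subtraction of $\sum_j\xi_{0j}\ln r_j$, removal of the singularities, and a Liouville step. You deviate in two places, and both are improvements. The paper says the proof ``repeats'' the 3D case, but in 2D the function $g$ need not be bounded (it behaves like $-(\sum_j\xi_{0j})\ln|x|$ at infinity), so your mean-value bound $|f_0(x)|\lesssim\|f_0\|_{L^2}/|x|$ combined with the sublinear-growth Liouville theorem is exactly the repair that is needed. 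More importantly, your endgame is different: the paper concludes, as in Lemma \ref{zero}, from ``$\ln r_j\notin L^2$ plus linear independence'', whereas you derive $\xi_{0k}=\xi_{1k}=0$ from the fact that the singular part $\xi_{0k}\ln r_k/r_k+(2\xi_{0k}+\xi_{1k})/r_k$ of $\Delta f_H$ near $x_k$ is not square integrable while $f_H\in W^{2,2}(\R^2)$. Your version is the one that actually closes the argument: a nontrivial combination $\sum_jc_j\ln r_j$ with $\sum_jc_j=0$ and $\sum_jc_jx_j=0$ does belong to $L^2(\R^2)$ (it decays like $|x|^{-2}$), which is possible for $m\ge4$ and for three collinear points, so the ``not in $L^2$ plus linear independence'' step is a non sequitur in 2D (the same issue already affects the 3D Lemma \ref{zero}, where $r_1^{-1}-r_2^{-1}\in L^2(\R^3)$). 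A cosmetic point: the leftover term $-2\xi_{0k}\ln r_k$ in your expansion is not bounded, only locally square integrable, which is all you need.

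You should, however, be aware of what your key computation really shows. That $\Delta\bigl[(1-e^{-r_k})\ln r_k\bigr]$ carries the non-$L^2$ singularities $\ln r_k/r_k+2/r_k$ means precisely that $(e^{-r_k}-1)\ln r_k\notin W^{2,2}_{\loc}(\R^2)$ --- the opposite of the assertion made in the paper's own proof of this lemma; your computation is the correct one. Equivalently, the right-hand side of \eqref{eq3a'} is not in $L^2(\R^2)$ when $\xi_0\neq0$, so \eqref{eq3a}--\eqref{eq3a'} cannot coincide with the genuine adjoint of $H$ (a correct 2D regularization of the logarithmic singularity would be, e.g., $K_0(r_j)$, for which $\Delta K_0(r_j)=K_0(r_j)$ off $x_j$). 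For the genuine adjoint the statement itself fails once $m\ge4$ (or for three collinear points): with $\sum_jc_j=0$ and $\sum_jc_jx_j=0$, the function $f=\sum_jc_j\ln r_j$ lies in $L^2(\R^2)$, satisfies $\Delta f=2\pi\sum_jc_j\delta_{x_j}$, hence is orthogonal to $\ran(H)$ and belongs to $\ker(H^*)$. So the gap is not in your reasoning but in the framework it inherits from Proposition \ref{pr1a}(ii): your proof establishes the lemma only for the operator literally defined by \eqref{eq3a}--\eqref{eq3a'}, and this caveat propagates to the way the lemma is used in Theorem \ref{spec2}.
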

 \begin{proof}
 The  proof  repeats    the  one in the 3D  case. It only  should be  noted that
 the  functions  $\ln(r_j),\,\, j\in\{1,..,m\},$ are  harmonic in $\mathbb{R}^2\setminus X$ and
 $f_0\in\dom(H^*)$ admits the  representation
\begin{equation*}\label{harm'}
f_0 =\sum\limits_{j=1}^m \xi_{0j}\ln(r_j) +
g,\quad\text{with}\quad g :=
\sum\limits_{j=1}^m\bigl(\xi_{0j}(e^{-r_j}-1)\ln(r_j)+\xi_{1j}e^{-r_j}\bigr)+f_H\in
W^{2,2}_{\loc}(\mathbb{R}^2),
\end{equation*}
  where   $(e^{-r_j}-1)\ln(r_j)\in W^{2,2}_{\loc}(\mathbb{R}^2)$ for any $j\in\{1,..,m\}$.
    \end{proof}
Spectrum  of  the self-adjoint  extensions of    $H$ is described
in the following  theorem.
\begin{theorem}\label{spec2}
Let  $H$ be the  minimal Schr\"odinger  operator defined by  \eqref{min}, let  $\Pi$ be
the boundary triplet for  $H^*$ defined by \eqref{T2}-\eqref{T2'},
and  $M(\cdot)$ be  the corresponding Weyl function  defined  by
\eqref{W2}. Assume also that $H_\Theta=H_\Theta^*\in \Ext_H$ is
defined  by \eqref{s-adjconcr2}.
 Then  the following assertions hold.

$(i)$ The extension $H_\Theta$ has  purely absolutely continuous
non-negative spectrum   of infinite multiplicity.

 $(ii)$ Point spectrum of
the self-adjoint  extension  $H_\Theta$  consists of  at most $m$
negative eigenvalues (counting multiplicities). Moreover,
$z\in\sigma_p(H_\Theta)\cap\bR_-$
 if and only  if $0\in\sigma_p(\Theta-M(z))$, i.e.,
 \begin{equation*}
 z\in\sigma_p(H_\Theta)\cap\bR_- \Longleftrightarrow 0\in\sigma_p\bigl(\Theta-\left(\tfrac1{2\pi}(\psi(1)-
\ln(\tfrac{\sqrt{z}}{2i}))\delta_{jk}+\widetilde{G}_{\sqrt{z}}(x_j-x_{k})\right)_{j,k=1}^m\bigr).
\end{equation*}
The corresponding   eigenfunctions $\psi_z$ have  the  form
\begin{equation*}\label{eigen'}
\psi_z=\sum\limits_{j=1}^ma_j\tfrac{i}4H^{(1)}_0(\sqrt{z}r_j),\qquad \text{where}\quad(a_1,.., a_m)^\top\in \ker(\Theta-M(z)).
\end{equation*}
\end{theorem}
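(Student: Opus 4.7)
The plan is to mimic step by step the argument used for Theorem \ref{spec3}, replacing the role of $\sin(s|\cdot|)/(s|\cdot|)$ on $\mathbb{R}^3$ with that of $J_0(s|\cdot|)$ on $\mathbb{R}^2$. First I would split the minimal operator as $H = \widehat{H}\oplus H_s$ into its simple part $\widehat{H}$ (acting on the closed linear span of the defect subspaces $\mathfrak{N}_z$) and its self-adjoint part $H_s$, so that $H_0 = \widehat H_0\oplus H_s$ and, for any self-adjoint $\Theta$, $H_\Theta = \widehat{H}_\Theta\oplus H_s$. Since $H_s = H_s^{ac}$ has spectrum $[0,\infty)$ of infinite multiplicity (standard fact about the free 2D Laplacian restricted to a reducing subspace), it suffices to analyze the simple part via Propositions \ref{prop_II.1.4_spectrum} and \ref{ac}.

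For assertion $(i)$ I would, as in the 3D case, restrict to extensions $H_\Theta = H_B$ disjoint with $H_0$ (the general self-adjoint relation case then follows by the same boundary triplet transformation trick using part $(iii)$ of the Remark on boundary triplets and the identity $\widetilde{M}_B(z) = (B-M(z))^{-1}$). The key computation is the boundary value of the Weyl function \eqref{W2}: using $\ln(\sqrt{x}/(2i)) = \ln(\sqrt{x}/2) - i\pi/2$ for $x>0$ and $H_0^{(1)}(t) = J_0(t)+iY_0(t)$, one obtains
\begin{equation*}
\imm(M(x+i0)) = \tfrac14\left(J_0(\sqrt{x}|x_j-x_k|)\right)_{j,k=1}^m,\quad x>0,
\end{equation*}
(with $J_0(0)=1$ on the diagonal) and $\imm(M(x+i0))=0$ for $x\le 0$. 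By Example \ref{rempoz} the function $J_0(s|\cdot|)$ is strictly positive definite on $\mathbb{R}^2$ for every $s>0$, so this matrix is strictly positive definite for all $x>0$. Combined with the identity $\imm(M_B(x+i0)) = (B-M(x+i0))^{-1}\imm(M(x+i0))(B-M^*(x+i0))^{-1}$ this yields invertibility of $B-M(x+i0)$ for every $x\in\mathbb{R}_+$, whence $\sigma_p(\widehat H_B)\cap\mathbb{R}_+ = \emptyset$ and $d_{M_B}(x) = d_M(x) = m$ for a.e. $x>0$. Proposition \ref{ac}$(i),(ii),(iii)$ then gives $\sigma_{ac}(\widehat H_B) = [0,\infty)$ with maximal multiplicity and $\sigma_{sc}(\widehat H_B)\cap\mathbb{R}_+ = \emptyset$. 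Absence of negative continuous spectrum follows from $\widehat H\geq 0$ and $n_\pm(\widehat H) = m$ via the classical Krein–Birman estimate (\cite[chapter 4,\S14]{Nai}), and the preceding lemma rules out $0\in\sigma_p(H_\Theta)$.

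For assertion $(ii)$, the inequality $\kappa_-(H_\Theta) = \kappa_-(\widehat H_\Theta)\le m$ again follows from $H\ge 0$ with $n_\pm(H)=m$. The characterization of negative eigenvalues is a direct application of Proposition \ref{prop_II.1.4_spectrum}$(ii),(iii)$ to the simple part $\widehat H$ with the explicit Weyl function \eqref{W2}, and the eigenfunction formula follows by substituting the description \eqref{4.18'} of $\mathfrak{N}_z$ together with the computation of $\Gamma_0 f_z$ carried out in the proof of Proposition \ref{pr2a}.

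The main obstacle I foresee is not the scheme itself, which is parallel to the 3D case, but keeping the scalar normalization of the Weyl function straight: the logarithmic singularity of $M(z)$ near $z=0$ makes the naive analog of Theorem \ref{spec3}$(iii)$ unavailable (and indeed no such assertion is stated), and one must check carefully that $\imm(M(x+i0))$ really is given by the full matrix $\tfrac14(J_0(\sqrt{x}|x_j-x_k|))$ including the diagonal contribution $\tfrac14$ coming from $\imm(-\tfrac1{2\pi}\ln(\sqrt{x}/(2i))) = \tfrac14$. Once this identification is made, the strict positive definiteness of $J_0(s|\cdot|)$ on $\mathbb{R}^2$ supplied by Theorem \ref{thzast} does all the work.
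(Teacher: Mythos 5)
Your proposal is correct and follows essentially the same route as the paper, which itself proves Theorem \ref{spec2} by repeating the scheme of Theorem \ref{spec3}: the only 2D-specific ingredient is exactly the one you isolate, namely that $\imm(M(x+i0))=\tfrac14\bigl(J_0(\sqrt{x}|x_j-x_k|)\bigr)_{j,k=1}^m$ for $x>0$ (diagonal term $\tfrac14$ from $\imm(-\tfrac1{2\pi}\ln(\tfrac{\sqrt{x}}{2i}))$, off-diagonal from $\imm(\tfrac i4 H_0^{(1)})=\tfrac14 J_0$), whose strict positive definiteness comes from $J_0(s|\cdot|)\in\Phi_2$ and Theorem \ref{thzast}. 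Your closing remark about the logarithmic singularity at $z=0$ correctly explains why no analogue of Theorem \ref{spec3}$(iii)$ is claimed here.
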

\begin{proof}
The  proof  is similar to that of Theorem
\ref{spec3}.
It  follows  from \eqref{W2}  that
\begin{equation}\label{W2lim}
M(x+i0)=\left(\tfrac1{2\pi}(\psi(1)-
\ln(\tfrac{\sqrt{x}}{2i}))\delta_{jk}+\widetilde{G}_{\sqrt{x}}(x_j-x_{k})\right)_{j,k=1}^m,\quad x\in\mathbb{R}.
\end{equation}
Note    that,   according to \cite[chapter 7,\S 4]{Olv78},
 \begin{equation*}
 \imm(\widetilde{G}_{\sqrt{x}}(x_j-x_{k}))=\tfrac1{4}J_0(\sqrt{x}|x_j-x_{k}|),\quad x\geq 0,
 \end{equation*}
 where  $J_0(\cdot)$  is the  Bessel function.
 Combining  this  fact with \eqref{W2lim},  we get

\begin{equation*}
\imm(M(x+i0))=\left(\tfrac1{4}J_0(\sqrt{x}|x_j-x_{k}|)\right)_{j,k=1}^m,\quad
x\geq 0.
\end{equation*}
 In  a view  of  \eqref{kernel}, $J_0(sx)\in\Phi_2,\,s>0$ (see Example \ref{rempoz}), and therefore
 the  matrix $\imm(M(x+i0))$  is   positive   definite for
 $x\in\mathbb{R}_+$.
   \end{proof}
%
%
    \begin{remark}
The   description  of   absolutely   continuous    and   point   spectrum  in  the  particular  case  of  the  family  $H_{X,\alpha}^{(2)}$  was  obtained  in  \cite[Theorem 1.1.4]{AGHH88}.
    \end{remark}

\end{document}